\documentclass[12pt]{amsart}
\usepackage{comment}
\usepackage{macros}
\usepackage{enumitem}

\newtheorem*{ack}{Acknowledgements}
\newtheorem*{theorem*}{Theorem}

\usepackage{mathtools}



\begin{document}

\title
[Formal GAGA for gerbes] 
{Formal GAGA for gerbes}
\date{1 October 2023}
\vspace{-.8cm}

\author{Andrew Kresch}
\email{andrew.kresch@math.uzh.ch} 
\address{Institut f\"ur Mathematik, Universit\"at Z\"urich, Winterthurerstrasse 190, CH-8057 Z\"urich, Switzerland}

\author{Siddharth Mathur}
\email{siddharth.mathur@mat.uc.cl} 
\address{Departamento de Matem\'atica, Pontificia Universidad Cat\'olica de Chile, Santiago, Chile}

\begin{abstract}

\noindent Fix an $I$-adically complete Noetherian ring $A$ and suppose $X$ is a proper $A$-scheme. This article concerns the relationship between the Brauer group of $X$ and that of the various $X_n$ where $X_n$ is the fiber over $A/I^{n+1}$. In particular, we answer a question of Grothendieck by showing that, in positive and mixed characteristic, there are examples of $X$ with nontrivial Brauer classes that restrict to zero on all the $X_n$. We characterize such behavior, prove this cannot happen in characteristic zero, and deduce a formal GAGA statement for Brauer classes. 

\end{abstract}
\maketitle

\vspace{-.8cm}

\section{Introduction}
\label{sec:intro}

Given a family of varieties, the infinitesimal thickenings of a fiber should govern the local geometry of the family near that fiber. This principle is supported by fundamental theorems in algebraic geometry. Indeed, certain invariants of the total space of a family over a complete local Noetherian ring can be recovered from the thickenings of the closed fiber. For instance, for a family proper over the base the formal GAGA theorem establishes this for coherent sheaves via the theorem on formal functions and Grothendieck's existence theorem. Another example is formal GAGA for Picard groups (cf. Proposition \ref{elementaryPic}): if $(A, \mathfrak{m})$ is a complete local Noetherian ring, $X$ a proper $A$-scheme, and $X_n=X \times_{\Spec A} \Spec A/\mathfrak{m}^{n+1}$, then we have an isomorphism
$$\mathrm{Pic}(X) \xrightarrow{\sim} \invlim \mathrm{Pic}(X_n).$$



Grothendieck \cite[\S 3]{Brauer3} investigated the analogous map for another invariant, the Brauer group. More generally, he studied the restriction homomorphism in \'etale cohomology
\begin{equation}
\label{eqn.phi}
\phi\colon H^2(X, \mathbb{G}_m) \to \invlim H^2(X_n, \mathbb{G}_m),
\end{equation}
posed the question of injectivity of $\phi$, and suggested it may hold for torsion classes (see \cite[Rem. 3.4 (a)]{Brauer3}). Our first result explains why this is not the case in mixed and positive characteristic. Indeed, in Section \ref{sec:p} we show that N\'eron-Severi jumps characterize the non-injectivity of $\phi$ and use this to settle Grothendieck's question in Section \ref{sec:ex}. A simple version of Theorem \ref{mainp} is:



\begin{Theorem} \label{main1} Suppose $(A,\mathfrak{m},k)$ is a complete DVR with $\mathrm{char}(k)>0$ and let us denote the generic point by $\eta$. If $X$ is proper and smooth over $A$, then the following are equivalent:
\begin{itemize}
\item[(a)] The system of abelian groups $(\mathrm{Pic}(X_n))$ is Mittag-Leffler. 
\item[(b)] The homomorphism $\phi\colon H^2(X, \mathbb{G}_m) \to \invlim H^2(X_n, \mathbb{G}_m)$ is injective.
\item[(c)] The equality $\mathrm{rk}(\mathrm{NS}(X_0))=\mathrm{rk}(\mathrm{NS}(X_{\eta}))$ holds. \end{itemize}\end{Theorem}

In characteristic zero, we demonstrate the injectivity of (\ref{eqn.phi}) unconditionally (see Remark \ref{groray}, Theorem \ref{complexinjective}). We then bound the order of an inverse system of Brauer classes, and use this to deduce formal GAGA for Brauer groups in characteristic zero (see Theorem \ref{thm:GAGAbr}).



\begin{Theorem} \label{maincomplex} Suppose $(A,\mathfrak{m},k)$ is a complete local Noetherian ring with $\mathrm{char}(k)=0$. If $X$ is a proper $A$-scheme, then $(\mathrm{Pic}(X_n))$ is a Mittag-Leffler system, $\phi$ in (\ref{eqn.phi}) is injective, and it induces:
\begin{equation}
\label{eqn.tors}
H^2(X,\mathbb{G}_m)_{\mathrm{tors}} \xrightarrow{\sim} \invlim [H^2(X_n,\mathbb{G}_m)_{\mathrm{tors}}].
\end{equation}
 \end{Theorem}
 
\begin{Example} \label{ex:notsurj} The map $\phi$ in (\ref{eqn.phi}) is not surjective. For example, the group on the left is torsion when $X$ is regular, but on the right there are non-torsion elements already when $S=\Spec \mathbb{C}[[t]]$ and $X=V\times S$, with $V$ a K3 surface over $\mathbb{C}$ (see also \cite[Ex.\ 4.13]{BhattTannaka} and \cite[Rem.\ 2.1.8]{bouthiercesnavicius}). \end{Example}



The homomorphism $\phi$ in (\ref{eqn.phi}) was also studied by Geisser-Morin \cite{GeisserMorin}, under more restrictive conditions, and Binda-Porta \cite{BindaPorta}, using derived algebraic geometry. In particular, they establish the injectivity of the intermediate homomorphism
\begin{equation}
\label{eqn.introinj}
H^2(X, \mathbb{G}_m) \hookrightarrow H^2(\mathfrak{X},\mathbb{G}_m)
\end{equation}
where $\mathfrak{X}$ is the formal scheme associated with $X_0$ in $X$. In Section \ref{sec:formalinjectivity}, we will present a short alternative proof of this statement without making recourse to derived methods and which also holds for any commutative smooth affine group scheme in place of $\mathbb{G}_m$. 

The formalism of continuous cohomology in \cite{Jannsen} yields the exact sequence:
\begin{equation}
\label{eqn.introexact}
0 \to {\invlim}^1\mathrm{Pic}(X_n) \to H^2(\mathfrak{X},\mathbb{G}_m) \to \invlim H^2(X_n, \mathbb{G}_m) \to 0.
\end{equation}
So, by \eqref{eqn.introinj} if ${\invlim}^1\mathrm{Pic}(X_n)=0$, then $\phi$ is injective. This was observed, in increasing degrees of generality, by Grothendieck, Geisser-Morin, and Binda-Porta, and it allows them to conclude that $\phi$ is injective when the Picard functor is formally smooth, e.g., if $X/S$ has $1$-dimensional fibers. Unfortunately this criterion rarely applies in higher dimensions. Indeed, even for smooth projective $X$ over $A=\mathbb{C}[[t]]$, the associated Picard scheme need not be smooth.

On the other hand, ${\invlim}^1\mathrm{Pic}(X_n)=0$ if $(\mathrm{Pic}(X_n))$ has the Mittag-Leffler property, i.e., for every $m$, the image of $\mathrm{Pic}(X_n) \to \mathrm{Pic}(X_m)$ stabilizes as $n$ grows. This too fails in general; see Section \ref{sec:ex} for examples which ultimately lead to the non-injectivity of $\phi$. This should not be surprising because the Picard functor is quite large. However, if we distinguish the algebraically trivial part of the Picard functor from its N\'eron-Severi quotient we may exploit the fact that they are large in different ways: one is Noetherian and the other is a finitely generated abelian group. 

Indeed, using the Picard scheme, Lipman's mixed-characteristic analogue, and N\'eron-Popescu desingularization, we obtain results that force the Mittag-Leffler property on $(\mathrm{Pic}^0(X_n))$, even when $\mathrm{Pic}^0(-)$ is not formally smooth (see Proposition \ref{prop:mittagapproxfinite}, Theorem \ref{algclosedML}, and Corollary \ref{cor:pic0MLchar0}). In turn, this allows us to characterize the non-injectivity of $\phi$ in terms of N\'eron-Severi jumps in mixed and positive characteristic (see Theorem \ref{mainp}).


On the other hand, we show that the finite-generation of the N\'eron-Severi part implies it is eventually constant in characteristic zero (see, e.g., Proposition \ref{NSeventually}). This fact imposes cohomological properties on the Picard scheme (see Lemma \ref{NSML}) which imply three important results in characteristic zero: $(\mathrm{Pic}(X_n))$ is Mittag-Leffler, the term on the right of $(\ref{eqn.tors})$ is torsion (see Theorem \ref{thm:GAGAbr}), and the system $(\mathrm{Pic}(X_n))$ is eventually surjective in certain cases, even over non-local rings (see Proposition \ref{prop:nonlocal}). 

\begin{ack} We would like to thank K\c{e}stutis \v{C}esnavi\v{c}ius, Aise Johan de Jong, Elden Elmanto, Jack Hall,  Ariyan Javanpeykar, David Benjamin Lim, David Rydh, Vladimir Sosnilo, and Remy van Dobben de Bruyn for helpful comments and discussions. We thank Piotr Achinger for stimulating conversations and making us aware of Example \ref{ex:nonsurjp}. We are also indebted to Ofer Gabber for suggestions that led to Example \ref{ex:H1jump} and Theorem \ref{algclosedML}. The first author was partially supported by the Swiss National Science Foundation. The second author is supported by FONDECYT Regular grant No. 1230402 and was supported by the European Research Council (ERC) under the European Union’s Horizon 2020 research and innovation programme (grant agreement No. 851146). \end{ack}

\section{Preliminaries} 
\label{sec:preliminaries}
Fix a scheme $X$ and let $(\mathrm{Ab})^{\mathbb{N}}$ (respectively $(\mathrm{Sh})^{\mathbb{N}}$) denote the category of inverse systems of abelian groups (respectively abelian sheaves for the \'etale topology on $X$).

\begin{definition} (Jannsen, \cite{Jannsen}) \label{contcoho}
We denote by $H^i_{\mathrm{cont}}(X,-)$, the derived functors of the left-exact functor 
\[F\colon (\mathrm{Sh})^{\mathbb{N}} \to \mathrm{Ab}\]
sending $(G_n) \mapsto \invlim H^0(X,G_n)$.
This is the \emph{continuous cohomology} of the system $(G_n)$.
\end{definition}

\begin{remark} \label{contleray} The functor $F$ can be written as the composition of functors  
\[\invlim(-) \circ (H^0(X,-)) \colon (\mathrm{Sh})^{\mathbb{N}} \to (\mathrm{Ab})^{\mathbb{N}} \to \mathrm{Ab}.\]
Therefore the Grothendieck spectral sequence and the vanishing of $\mathrm{R}^i\invlim(-)$ for $i>1$ (for abelian groups)
 lead to short exact sequences for each $j \geq 1$:
\begin{equation}
\label{eqn.prelimexact}
0 \to {\invlim}^1H^{j-1}(X,G_n) \to H^j_{\mathrm{cont}}(X,(G_n)) \to \invlim H^j(X, G_n) \to 0.
\end{equation} \end{remark}

 For $j=1$ and $j=2$ there are descriptions of $H^j_{\mathrm{cont}}(X,(G_n))$ involving torsors and gerbes.

\begin{definition} \label{conttorsor} A $(G_n)$-\emph{torsor} on $X$ is the data of a $G_n$-torsor $P_n \to X$
and an equivariant isomorphism $\phi_n\colon P_{n+1}\times^{G_{n+1}}G_n\to P_n$, for every $n$. An \emph{isomorphism of $(G_n)$-torsors} $(P_n,\phi_n) \to (P'_n,\psi_n)$ consists of the data of $G_n$-equivariant isomorphisms $f_n\colon P_n \to P'_n$ which are compatible with $\phi_n$ and $\psi_n$ for every $n \geq 0$. \end{definition} 

Given a morphism of abelian sheaves $G\to H$, we will use the standard notation ${-}\times^G H$ for the analogous promotion of structure group for gerbes: it yields a banded $H$-gerbe when applied to a banded $G$-gerbe. 

\begin{definition} \label{contgerbe} A \emph{banded $(G_n)$-gerbe} is the data of a banded $G_n$-gerbe $\mathcal{X}_n\to X$ and an isomorphism of banded gerbes $\phi_n\colon\mathcal{X}_{n+1}\times^{G_{n+1}}G_n\to \mathcal{X}_n$ for every $n \geq 0$. An \emph{isomorphism of banded $(G_n)$-gerbes} $(\mathcal{X}_n,\phi_n) \to (\mathcal{X}'_n,\psi_n)$ consists of the data of $G_n$-banded isomorphisms $f_n\colon \mathcal{X}_n \to \mathcal{X}_n'$ which are compatible with $\phi_n$ and $\psi_n$ for every $n \geq 0$.
\end{definition}

It is a standard fact that the set of
isomorphism classes of $G$-torsors over a scheme $X$, (resp. banded $G$-gerbes over $X$),
form an abelian group, naturally isomorphic to $H^1(X,G)$ (resp.
$H^2(X,G)$).
An analogous fact also holds for $(G_n)$-torsors and gerbes. 

\begin{proposition} \label{formalabeliancoho} Let $(G_n)$ denote an inverse system of abelian sheaves over the \'etale site of a scheme $X$.
Then the isomorphism classes of $(G_n)$-torsors on $X$ are naturally identified
with $H^1_{\mathrm{cont}}(X,(G_n))$, and the
isomorphism classes of banded $(G_n)$-gerbes on $X$ are naturally identified
with $H^2_{\mathrm{cont}}(X,(G_n))$.
\end{proposition}

It is useful to note that $(\mathrm{Ab})^{\mathbb{N}}$ has enough injectives and that the injective objects $(I_n)$ of $(\mathrm{Ab})^{\mathbb{N}}$ are
characterized as those with $I_n$ injective for all $n$, and
with every $I_{n+1}\to I_n$ a split epimorphism \cite[(1.1)]{Jannsen}.
Indeed, one can then quickly deduce the triviality of all
$(I_n)$-torsors and banded $(I_n)$-gerbes for injective objects $(I_n)$.
The proof of Proposition \ref{formalabeliancoho} is then a straightforward extension of the arguments in \cite[Rem.\ III.3.5.4, Thm.\ IV.3.4.2]{giraud}; we omit
the details.

When $X$ is proper over an $I$-adically complete Noetherian ring $A$,
and we denote by $X_n$ the fiber over $\Spec(A/I^{n+1})$ for all $n$,
we may identify the \'etale sites of all the thickenings $X_n$
with that of $X_0$. 
From a commutative group scheme $G$, smooth over $X$, we obtain
the inverse system of abelian sheaves $(G_{X_n})$.
The formal scheme associated with $X_0$ in $X$ will be denoted by
$\mathfrak{X}$.

\begin{definition} \label{formalcoho} Let $X$, $\mathfrak{X}$ and $G$ be as above, then we define the $i$th cohomology of $G$ on the formal scheme $\mathfrak{X}$ to be $H^i_{\mathrm{cont}}(X_0,(G_{X_n}))$ and we will denote it by $H^i(\mathfrak{X},G)$. A \emph{$G$-torsor on $\mathfrak{X}$} is a $(G_{X_n})$-torsor, and a \emph{banded $G$-gerbe on $\mathfrak{X}$} is a banded $(G_{X_n})$-gerbe. \end{definition}

For example, if we combine these definitions with Proposition \ref{formalabeliancoho} and take $G=\mathbb{G}_{m,X}$, we obtain:

\begin{proposition}
\label{prop.H2Gm}
The group $H^2(\mathfrak{X},\mathbb{G}_m)$ is naturally identified with the
isomorphism classes of banded $\mathbb{G}_m$-gerbes on $\mathfrak{X}$.
\end{proposition}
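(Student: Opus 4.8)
The plan is to obtain this as the special case $G=\mathbb{G}_{m,X}$ of the machinery already set up, so that the statement becomes a direct consequence of Proposition~\ref{formalabeliancoho} once the definitions are unwound. Since $\mathbb{G}_m$ is a smooth commutative group scheme over $X$, the framework preceding Definition~\ref{formalcoho} applies and produces the inverse system of \'etale sheaves $(\mathbb{G}_{m,X_n})$ to which both $H^2(\mathfrak{X},\mathbb{G}_m)$ and the notion of a banded $\mathbb{G}_m$-gerbe on $\mathfrak{X}$ refer by definition.

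First I would make explicit the identification of sites: because each $X_0\hookrightarrow X_n$ is a nilpotent closed immersion, topological invariance of the small \'etale site identifies the \'etale topos of every $X_n$ with that of $X_0$, and under this identification $\mathbb{G}_{m,X_n}$ is the sheaf $\mathcal{O}_{X_n}^{\times}$ while the surjections $\mathcal{O}_{X_{n+1}}\twoheadrightarrow\mathcal{O}_{X_n}$ furnish the transition maps $\mathbb{G}_{m,X_{n+1}}\to\mathbb{G}_{m,X_n}$. This exhibits $(\mathbb{G}_{m,X_n})$ as a genuine object of $(\mathrm{Sh})^{\mathbb{N}}$ over the single scheme $X_0$. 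I would then invoke Proposition~\ref{formalabeliancoho} with this base scheme and this inverse system to obtain a natural bijection between isomorphism classes of banded $(\mathbb{G}_{m,X_n})$-gerbes and $H^2_{\mathrm{cont}}(X_0,(\mathbb{G}_{m,X_n}))$. Substituting Definition~\ref{formalcoho}, which identifies the latter group with $H^2(\mathfrak{X},\mathbb{G}_m)$ and the former notion with that of a banded $\mathbb{G}_m$-gerbe on $\mathfrak{X}$, yields the asserted identification, and its naturality is inherited from that in Proposition~\ref{formalabeliancoho}.

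I do not expect a substantial obstacle, as all of the real content is contained in Proposition~\ref{formalabeliancoho}. The one point I would verify carefully is a compatibility of conventions: that the promotion of structure group $\mathcal{X}_{n+1}\times^{\mathbb{G}_{m,X_{n+1}}}\mathbb{G}_{m,X_n}$ in Definition~\ref{contgerbe} is formed along the very transition map $\mathbb{G}_{m,X_{n+1}}\to\mathbb{G}_{m,X_n}$ that defines the inverse system, so that the two occurrences of the phrase ``banded $(\mathbb{G}_{m,X_n})$-gerbe'', in Definition~\ref{contgerbe} and in the hypothesis of Proposition~\ref{formalabeliancoho}, denote literally the same objects. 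Granting this bookkeeping, the proof is immediate.
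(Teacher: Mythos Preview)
Your proposal is correct and matches the paper's approach exactly: the paper presents Proposition~\ref{prop.H2Gm} as an immediate consequence of combining Definition~\ref{formalcoho} with Proposition~\ref{formalabeliancoho} in the case $G=\mathbb{G}_{m,X}$, without writing out a separate proof. Your unpacking of the identifications is precisely the intended justification.
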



\begin{remark} \label{bhattscholze} Returning to the general setting, provided that the transition maps $G_{n+1}\to G_n$ are epimorphisms (as is the case, e.g., for $G_n=G_{X_n}$ as above), continuous cohomology $H^j_{\mathrm{cont}}(X,(G_n))$ may be identified with the cohomology $H^j(X_{\mathrm{pro\acute{e}t}},\invlim G_n)$ of a limit sheaf on the pro-\'etale site of Bhatt and Scholze; see \cite[\S 5.6]{BhattScholze}. \end{remark}

\section{Formal injectivity for $G$-gerbes}
\label{sec:formalinjectivity}
Let $X \to S$ be a proper morphism of schemes with $S=\Spec A$ the spectrum of an $I$-adically complete Noetherian ring $A$.
We employ the notation of Section \ref{sec:preliminaries}, e.g., $\mathfrak{X}$ for the associated formal scheme.
We give a formal injectivity result, generalizing the injectivity result of Binda-Porta mentioned in the introduction.
We avoid derived methods and instead deduce formal injectivity from the following result of Bhatt and Halpern-Leistner \cite[Thm.\ 7.4]{BhattHL}.

\begin{lemma} \label{Tannaka}
Let $A$ be an $I$-adically complete Noetherian ring, and let $X$ and $\mathcal{X}$ be algebraic stacks of finite type over $A$.
Suppose that $X$ is proper over $A$, and $\mathcal{X}$ has affine diagonal.
Then, denoting the fiber over $\Spec(A/I^{n+1})$ by $X_n$, the inclusion morphisms give rise to an equivalence of categories of morphisms of stacks over $A$:
\[ \mathrm{HOM}_A(X,\mathcal{X})\cong \invlim \mathrm{HOM}_A(X_n,\mathcal{X}). \]
\end{lemma}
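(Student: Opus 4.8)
The plan is to reduce the statement to two standard inputs: Tannaka duality for algebraic stacks with affine diagonal, and the coherent formal GAGA theorem (the theorem on formal functions together with Grothendieck's existence theorem) for the proper $A$-scheme $X$. The comparison functor $\mathrm{HOM}_A(X,\mathcal{X}) \to \invlim_n \mathrm{HOM}_A(X_n,\mathcal{X})$ is induced by pulling back morphisms along the closed immersions $\iota_n\colon X_n \hookrightarrow X$, and I would establish that it is an equivalence by translating both sides into categories of tensor functors out of the quasi-coherent (or coherent) sheaves on $\mathcal{X}$, where the comparison becomes a purely categorical statement about limits.

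First I would invoke Tannaka duality in the form due to Hall--Rydh and Lurie (and extended by Bhatt--Halpern-Leistner): since $\mathcal{X}$ is of finite type over $A$ with affine diagonal, for any algebraic stack $T$ over $A$ the assignment $f \mapsto f^*$ gives an equivalence
\[ \mathrm{HOM}_A(T,\mathcal{X}) \;\simeq\; \mathrm{Fun}^{\otimes}_A\!\bigl(\mathrm{Coh}(\mathcal{X}),\,\mathrm{Coh}(T)\bigr), \]
where the right-hand side denotes the $A$-linear, right-exact, symmetric monoidal functors satisfying the appropriate finiteness and flatness (\emph{tameness}) conditions. Applying this with $T=X$ and with each $T=X_n$, and noting that pullback along $\iota_n$ corresponds to postcomposition with the restriction $\iota_n^*\colon \mathrm{Coh}(X) \to \mathrm{Coh}(X_n)$, reduces the claim to the assertion that the formation of such tensor functors into the $\mathrm{Coh}(X_n)$ commutes with the inverse limit, i.e.
\[ \invlim_n \mathrm{Fun}^{\otimes}_A\!\bigl(\mathrm{Coh}(\mathcal{X}),\,\mathrm{Coh}(X_n)\bigr) \;\simeq\; \mathrm{Fun}^{\otimes}_A\!\bigl(\mathrm{Coh}(\mathcal{X}),\,\invlim_n \mathrm{Coh}(X_n)\bigr). \]
This holds formally, since the category of functors into an inverse limit of target categories is the inverse limit of the functor categories. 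It then suffices to identify $\mathrm{Coh}(X)$ with $\invlim_n \mathrm{Coh}(X_n)$ as symmetric monoidal $A$-linear categories compatibly with the tameness conditions, the transition functors being pullback along $X_n \hookrightarrow X_{n+1}$; this is exactly the coherent formal GAGA equivalence $\mathrm{Coh}(X) \xrightarrow{\sim} \invlim_n \mathrm{Coh}(X_n)$.

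The step I expect to be the main obstacle is ensuring that Tannaka duality and formal GAGA interlock at precisely the stated level of generality. On one hand, one must have Tannaka reconstruction available for a target $\mathcal{X}$ that is merely of finite type with affine diagonal, and one must verify that the monoidal equivalence of categories produced by formal GAGA respects tensor products, the unit, and flatness, so that it carries tame tensor functors to tame tensor functors; passing between coherent and quasi-coherent sheaves (via Ind-completion) and keeping track of the derived/connectivity bookkeeping is where the technical work concentrates, and is precisely what the derived formulation of Bhatt--Halpern-Leistner is designed to handle. Once these compatibilities are in place, combining the two displayed equivalences shows the comparison functor is fully faithful and essentially surjective; fully faithfulness can alternatively be seen directly, since two morphisms agreeing on all $X_n$ have isomorphism scheme affine over $X$ by the affine-diagonal hypothesis, whence formal GAGA for affine morphisms forces them to agree, while essential surjectivity is the algebraization of a compatible system $(f_n)$ furnished by the Tannaka--GAGA mechanism above.
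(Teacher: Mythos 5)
Your proposal is correct and follows essentially the same route as the paper: the paper likewise invokes Hall--Rydh's coherent Tannaka duality $\mathrm{HOM}(X,\mathcal{X})\cong\mathrm{Funct}_{\otimes,\simeq}(\mathrm{Coh}(\mathcal{X}),\mathrm{Coh}(X))$, applies the Grothendieck existence theorem to identify $\mathrm{Coh}(X)$ with $\invlim\mathrm{Coh}(X_n)$, uses the formal fact that tensor functors into a limit category form the limit of the functor categories, and then applies Tannaka duality again to each $X_n$. The compatibility concerns you flag (tameness, monoidal structure) are absorbed in the paper by citing Hall--Rydh's statement directly, which requires only right exact tensor functors and quasi-affine diagonal, so no extra derived bookkeeping is needed.
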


Originally proved using a short and direct descent-theoretic argument (cf. \cite[Prop.\ 7.5]{BhattHL}),
Lemma \ref{Tannaka} can also be deduced easily from the coherent Tannaka duality result of Hall and Rydh \cite[Thm.\ 8.4]{HallRydhCoh}, as we will now show. 
\begin{proof} Hall and Rydh's result \cite[Thm.\ 8.4]{HallRydhCoh} yields an equivalence of categories
\begin{equation}
\label{eqn.hallrydh}
\mathrm{HOM}(X,\mathcal{X})\cong
\mathrm{Funct}_{{\otimes},{\simeq}}(\mathrm{Coh}(\mathcal{X}),\mathrm{Coh}(X)),
\end{equation}
where $X$ and $\mathcal{X}$ are Noetherian algebraic stacks, and $\mathcal{X}$ has quasi-affine diagonal;
on the right in \eqref{eqn.hallrydh} is the category of right exact tensor functors and natural isomorphisms respecting the tensor structure.
Indeed, combining \eqref{eqn.hallrydh} with the Grothendieck existence theorem
yields an equivalence of categories
\[ \mathrm{HOM}(X,\mathcal{X})\cong \mathrm{Funct}_{{\otimes},{\simeq}}(\mathrm{Coh}(\mathcal{X}),\invlim \mathrm{Coh}(X_n)). \]
The category on the right is formally identified with
$\invlim \mathrm{Funct}_{{\otimes},{\simeq}}(\mathrm{Coh}(\mathcal{X}),\mathrm{Coh}(X_n))$,
and we conclude by applying
\eqref{eqn.hallrydh} to $X_n$ and $\mathcal{X}$. \end{proof}

\begin{proposition} \label{formalinjectivity}
Let $X$ be a scheme, proper over an $I$-adically complete Noetherian ring $A$, and let $G$ be a commutative group scheme, smooth and affine over $X$.
Then the natural homomorphism
$H^2(X,G) \to H^2(\mathfrak{X},G)$ is injective.
\end{proposition}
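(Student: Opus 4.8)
The plan is to prove injectivity by showing that a banded $G$-gerbe on $X$ that becomes trivial on the formal scheme $\mathfrak{X}$ must already be trivial on $X$. Concretely, I interpret the homomorphism $H^2(X,G)\to H^2(\mathfrak{X},G)$ via the identifications of Proposition \ref{formalabeliancoho}: a class on the left is a banded $G$-gerbe $\mathcal{G}\to X$, and its image on the right is the banded $(G_{X_n})$-gerbe obtained by restriction, i.e. the system $(\mathcal{G}\times_X X_n)$. To prove injectivity it suffices (since these are groups) to show that the kernel is trivial, so I would suppose $\mathcal{G}$ restricts to the trivial banded gerbe on $\mathfrak{X}$ and deduce that $\mathcal{G}$ itself is trivial, i.e. admits a section over $X$.

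The key reduction is to phrase triviality in terms of the existence of a morphism. A banded $G$-gerbe $\mathcal{G}\to X$ is trivial precisely when it admits a section of the structure morphism, equivalently a morphism $X\to\mathcal{G}$ over $X$; and triviality of the restricted system on $\mathfrak{X}$ amounts to a compatible family of sections $X_n\to\mathcal{G}\times_X X_n$, i.e. a compatible family of morphisms $X_n\to\mathcal{G}$ over $X$. So the statement I need is exactly that a compatible system of such morphisms over the thickenings lifts to a morphism over $X$. This is where Lemma \ref{Tannaka} enters: I would apply it with the proper $A$-stack taken to be $X$ (which is a proper algebraic $A$-scheme, hence a proper algebraic stack) and the target stack taken to be $\mathcal{G}$. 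To invoke the lemma I must verify its hypotheses for $\mathcal{G}$, namely that $\mathcal{G}$ is an algebraic stack of finite type over $A$ with affine diagonal. Finite type is clear since $\mathcal{G}$ is a gerbe over the finite-type $A$-scheme $X$; the affine-diagonal condition is the point where I use that $G$ is \emph{affine} over $X$, as the diagonal of a banded $G$-gerbe is governed by the automorphism sheaf $G$, so affineness of $G$ over $X$ yields affine diagonal of $\mathcal{G}$ over $X$ and hence over $A$.

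Granting this, Lemma \ref{Tannaka} supplies an equivalence
\[
\mathrm{HOM}_A(X,\mathcal{G})\cong\invlim\mathrm{HOM}_A(X_n,\mathcal{G}),
\]
and the compatible family of sections on the right (the data witnessing formal triviality) corresponds under this equivalence to a genuine morphism $X\to\mathcal{G}$ over $A$. I would then check that this morphism is a section of $\mathcal{G}\to X$ over $X$: the composite $X\to\mathcal{G}\to X$ agrees, after restriction to each $X_n$, with the identity, and the full faithfulness half of the equivalence (applied now with target $X$, or simply uniqueness of the compatible system) forces it to be the identity on $X$. Therefore $\mathcal{G}$ is the trivial banded $G$-gerbe, and the class is zero in $H^2(X,G)$, proving injectivity.

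The main obstacle I anticipate is the bookkeeping around Lemma \ref{Tannaka}: verifying carefully that $\mathcal{G}$ has affine diagonal from the affineness of $G$, and—more subtly—ensuring that the equivalence of $\mathrm{HOM}$-categories is compatible with the projections to $X$, so that the produced morphism is genuinely a section rather than merely some morphism to $\mathcal{G}$. One must track the commuting triangles over $X$ (not just over $A$) throughout, which requires applying the lemma in a relative form or combining it with the analogous full faithfulness for morphisms to $X$ itself. The rest is formal, given the dictionary between gerbes and $H^2$ established in Section \ref{sec:preliminaries}.
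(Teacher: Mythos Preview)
Your proposal is correct and follows essentially the same approach as the paper: represent the class by a banded $G$-gerbe, interpret formal triviality as a compatible system of sections $X_n\to\mathcal{G}$, and apply Lemma \ref{Tannaka} to obtain a section $X\to\mathcal{G}$. The paper's proof is more terse and does not spell out the verification of affine diagonal or the check that the resulting morphism is a section, but the argument is the same.
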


\begin{proof} Given a class $\alpha \in H^2(X,G)$, we may represent it as a banded $G$-gerbe $f\colon \mathcal{X} \to X$, and suppose $\alpha|_{\mathfrak{X}}=0$.
This means there is a compatible system of morphisms $\sigma_n\colon X_n \to \mathcal{X}$ for each $n$.
Lemma \ref{Tannaka} yields a section $X\to \mathcal{X}$ of $f$, thus $\alpha=0$.
\end{proof}

\begin{remark} \label{pastworkremark} Proposition \ref{formalinjectivity}, in the case when $G=\mathbb{G}_m$, was shown by Geisser-Morin \cite{GeisserMorin} when $X$ is regular and $X \to S$ is flat over a complete $1$-dimensional base. Using derived algebraic geometry, Proposition \ref{formalinjectivity} was proven for $G=\mathbb{G}_m$ by Binda-Porta \cite{BindaPorta}. See the proof of \cite[Lem. 4.2]{shin2020cohomological} for another precursor to Proposition \ref{formalinjectivity}. \end{remark}


\begin{remark} \label{henselianpair} One may relax the completeness condition on $A$ in Proposition \ref{formalinjectivity}: instead, we may suppose that $(A,I)$ is merely a Henselian pair (see \cite[Tag 09XE]{stacks-project}) such that $A$ is Noetherian and $A \to \widehat{A}=\invlim A/I^m$ has regular geometric fibers (e.g. if $A$ is excellent). This is because the natural map $H^2(X,G) \to H^2(X_{\widehat{A}},G)$ is injective by \cite[Lem.\ 2.1.3]{bouthiercesnavicius}. \end{remark}

\section{The Picard functor and N\'eron-Severi group}
\label{sec:picardfunctor}
Fix a proper morphism of Noetherian schemes $f\colon X \to S$,
and for a scheme $T$ with a morphism $T\to S$
we let $f_T\colon X_T\to T$ denote the base change of $f$.
We make some brief observations about the Picard functor $\underline{\mathrm{Pic}}_{X/S}$,
\[
\underline{\mathrm{Pic}}_{X/S}(T)=H^0(T,\mathbf{R}^1(f_T)_*\mathbb{G}_m),
\]
and the Picard stack $\underline{\mathcal{P}ic}_{X/S}$ which associates to $T \to S$ the groupoid of line bundles on $X_T$.

\begin{definition} \label{numtrivial} Let $X \to S$ and $T \to S$ be as above.
\ \begin{enumerate} 
\item A line bundle $L$ on a scheme $X_0$,
proper over an algebraically closed field $k$,
is said to be \emph{numerically trivial} (resp.\ \emph{algebraically trivial})
if, for every curve $C \subset X_0$, the degree of $L|_C$ is zero (resp.\ if
$L$ and $\mathcal{O}_{X_0}$ appear as fibers of some line bundle
on $(X_0)_T$, where $T$ is a connected $k$-scheme of finite type).
\item We say a line bundle $L$ on $X$ is
\emph{numerically trivial over $S$} (resp.\ \emph{algebraically trivial over $S$})
if $L_s=L|_{X_s}$ is numerically trivial for every geometric point $s\colon \Spec k \to S$ (resp.\ if $L_s=L|_{X_s}$ is algebraically trivial for every geometric point). 

\item There is a subgroup (resp.\ subsheaf) $\mathrm{Pic}^{\tau}(X_T) \subset \mathrm{Pic}(X_T)$ (resp.\ $\underline{\mathrm{Pic}}^{\tau}_{X/S} \subset \underline{\mathrm{Pic}}_{X/S}$) which consists of those line bundles which are numerically trivial over $T$
(resp.\ with sections over $T$ \'etale locally represented by line bundles which are numerically trivial over schemes, \'etale over $T$).
Similarly, there is a subgroup (resp.\ subsheaf) $\mathrm{Pic}^{0}(X_T) \subset \mathrm{Pic}(X_T)$ (resp.\ $\underline{\mathrm{Pic}}^{0}_{X/S} \subset \underline{\mathrm{Pic}}_{X/S}$), where the condition of algebraic triviality replaces numerical triviality. \end{enumerate}
\end{definition}

\begin{remark} \label{rem:tau}
The natural inclusion $\underline{\mathrm{Pic}}^{\tau}_{X/S} \subset \underline{\mathrm{Pic}}_{X/S}$ is representable by open immersions (see \cite[Exp.\ XIII, Thm. 4.7]{SGA6}).
\end{remark}

\begin{remark} \label{thmofbase} For any proper morphism $X \to S$ over a Noetherian scheme $S$, the quotient groups $\mathrm{Pic}^{\tau}(X_s)/\mathrm{Pic}^0(X_s)$ are finite for any geometric point $s \in S$, and their orders are uniformly bounded (see \cite[Exp.\ XIII, Thm. 4.6, 5.1]{SGA6}). Thus, if $[n]\colon \underline{\mathrm{Pic}}_{X/S} \to \underline{\mathrm{Pic}}_{X/S}$ is the $n$th power map, then $\underline{\mathrm{Pic}}^{\tau}_{X/S}=[n]^{-1}(\underline{\mathrm{Pic}}^0_{X/S})$ for suitable $n>0$.
\end{remark}


\begin{remark} \label{rem:rep}
Suppose $S$ is separated, $f$ is proper and flat, then $\underline{\mathcal{P}ic}_{X/S}$ is an algebraic stack, locally of finite type over $S$ with affine diagonal
(\cite[Thm.\ 1.2]{HallRydhCoh}).
If, additionally, $f$ is \emph{cohomologically flat in dimension zero} (i.e., the formation of $f_*\mathcal{O}_X$ is compatible with arbitrary base change along $S$),
then the functor $\underline{\mathrm{Pic}}_{X/S}$ is an algebraic space over $S$ (\cite[Thm.\ 7.3]{FormalModuliI}, see also \cite[Thm.\ 2.2.6]{brochardpicard} and note that Brochard's conclusion is valid in our situation because $(f_T)_*\mathbb{G}_m$ is smooth). In this case, the natural morphism $\underline{\mathcal{P}ic}_{X/S} \to \underline{\mathrm{Pic}}_{X/S}$ is a gerbe (\cite[Prop.\ 2.3.2]{brochardpicard}). Note that $f$ is cohomologically flat in dimension zero if the geometric fibers of $f$ are reduced (see \cite[Prop. 7.8.6]{EGAIII2}).
\end{remark}

It will be useful to discuss the component group of the Picard group of a (local) family. As such, we make the following definition.

\begin{definition} \label{defNS}
If $X_0$ is a scheme proper over a field $k$ then we define the \emph{N\'eron-Severi group} of $X_0$ to be the cokernel in the exact sequence
\[0 \to \mathrm{Pic}^0(X_0) \to \mathrm{Pic}(X_0) \to \mathrm{NS}(X_0) \to 0.\]
If $f\colon X \to S=\Spec A$ is a proper morphism where $(A,\mathfrak{m},k)$ is a complete local Noetherian ring,
then we define the \emph{N\'eron-Severi group of $X$ over $S$}, denoted by $\mathrm{NS}(X/S)$, via the following commutative diagram with exact rows
\[
\begin{tikzcd}
  0 \arrow[r] & r^{-1}(\mathrm{Pic}^0(X_0)) \arrow[d] \arrow[r] & \mathrm{Pic}(X) \arrow[d,"r"] \arrow[r] & \mathrm{NS}(X/S) \arrow[d] \arrow[r] & 0  \\
  0 \arrow[r] & \mathrm{Pic}^0(X_0) \arrow[r] & \mathrm{Pic}(X_0) \arrow[r] & \mathrm{NS}(X_0) \arrow[r] & 0
\end{tikzcd}
\]
where $X_0=X \times_{\Spec A} \Spec k$.
\end{definition}

\begin{remark} \label{rem:rankmakessense}
Note that in the above definition $\mathrm{NS}(X/S) \subset \mathrm{NS}(X_0)$ and since the latter is a finitely generated abelian group (see \cite[Thm.\ 3.4.1 (i)]{brochardfiniteness}), the rank of $\mathrm{NS}(X/S)$ is a well-defined non-negative integer.
\end{remark}

\begin{remark} \label{rem:tauneron} The group $\mathrm{Pic}(X)/\mathrm{Pic}^{\tau}(X)$ is finitely generated and has the same rank as $\mathrm{NS}(X/S)$. We may see this as follows: first, note that $r^{-1}(\mathrm{Pic}^0(X_0)) \subset \mathrm{Pic}^{\tau}(X)$. Indeed, if a line bundle is numerically trivial on the closed fiber of the proper morphism $X \to \Spec A$, then it is numerically trivial on all fibers (see Remark \ref{rem:tau}). Thus, the group $\mathrm{Pic}(X)/\mathrm{Pic}^{\tau}(X)$ is a quotient of $\mathrm{NS}(X/S)$ and the kernel is a finite group. Indeed, $\mathrm{Pic}^{\tau}(X)/r^{-1}(\mathrm{Pic}^0(X_0))$ is finite because it injects into $\mathrm{Pic}^{\tau}(X_0)/\mathrm{Pic}^0(X_0)$ and the latter is a finite group by Remark \ref{thmofbase}. \end{remark}

We end this section with two results, Proposition \ref{semiabelianfiber} and Lemma \ref{lem:tauclosed}, that are essentially due to Grothendieck; see \cite[Cor. 2.3, Cor. 2.7, Rem. 2.9]{TDTEVI}.

\begin{proposition} \label{semiabelianfiber}
Fix a reduced Noetherian scheme $S$ and let $X \to S$ be a proper and flat morphism of schemes which is cohomologically flat in dimension zero. Suppose that for each geometric point $s\colon \Spec k \to S$,
the resulting Picard scheme $\underline{\mathrm{Pic}}^0_{X_s/k}$ is a semi-abelian variety.
Then $\underline{\mathrm{Pic}}^0_{X/S}$ is a smooth algebraic space over $S$.
\end{proposition}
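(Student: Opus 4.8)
The plan is to verify smoothness through the fibral criterion, reducing the problem to flatness, and then to control flatness over the reduced base $S$ by showing that the relative fiber dimension of $\underline{\mathrm{Pic}}^0_{X/S}$ is locally constant, the semi-abelian hypothesis entering decisively at this last step. First I would reduce to the case where $S=\Spec A$ is affine and separated, smoothness being local on the base, so that Remark \ref{rem:rep} applies: $\underline{\mathrm{Pic}}_{X/S}$ is an algebraic space locally of finite type over $S$, and $\underline{\mathrm{Pic}}^0_{X/S}$ is an open subspace, hence a group algebraic space locally of finite presentation over the Noetherian base. By the fibral criterion for smoothness, such a morphism is smooth precisely when it is flat with geometrically smooth fibers. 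The geometric fibers here are the Picard schemes $\underline{\mathrm{Pic}}^0_{X_s/k}$, which are semi-abelian by hypothesis and therefore smooth. Thus the entire content of the proposition is the flatness of $\underline{\mathrm{Pic}}^0_{X/S}$ over $S$.

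Next I would translate flatness into a statement about fiber dimension. Since each fiber is a smooth group scheme, its dimension equals that of its Lie algebra, so $\dim \underline{\mathrm{Pic}}^0_{X_s}=\dim_k H^1(X_s,\mathcal{O}_{X_s})$. The key claim is that this integer is locally constant on $S$; granting it, flatness follows because a finite-type morphism to a reduced Noetherian base with geometrically smooth fibers of locally constant relative dimension is flat (equivalently, over a reduced base the local constancy of $h^1(\mathcal{O}_{X_s})$ makes $R^1f_*\mathcal{O}_X$ locally free and compatible with base change, which is the standard criterion governing smoothness of the identity component of the Picard space). To establish local constancy I would pass to the case $S$ integral with generic point $\eta$. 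By generic flatness together with the fibral criterion, $\underline{\mathrm{Pic}}^0_{X/S}$ is already smooth over a dense open $U\subseteq S$, of some constant relative dimension $d=\dim \underline{\mathrm{Pic}}^0_{X_\eta}$. Upper semicontinuity of $h^1(\mathcal{O}_{X_s})$ then gives $\dim \underline{\mathrm{Pic}}^0_{X_s}\ge d$ for every $s\in S$, with equality away from a proper closed subset.

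The main obstacle, and the place where the semi-abelian hypothesis does genuine work, is ruling out an upward jump $\dim \underline{\mathrm{Pic}}^0_{X_s}>d$ at special points. Such a jump means there are first-order classes in $H^1(X_s,\mathcal{O}_{X_s})$ that are not tangent to deformations over the nearby fibers; in the structure theory of the special fiber these obstructed directions are exactly what produce an additive, unipotent $\mathbb{G}_a$-type part inside $\underline{\mathrm{Pic}}^0_{X_s}$, which the semi-abelian hypothesis forbids. Making this precise is the technical heart of the argument and is essentially Grothendieck's infinitesimal study in \cite[\S 2]{TDTEVI}; I would model the reasoning on the cited Corollaries 2.3 and 2.7 and Remark 2.9 of loc.\ cit., which identify the absence of an additive part with local freeness of the relevant higher direct image, and hence with the flatness sought. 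Once local constancy of the fiber dimension is in hand, combining it with the fibral smoothness of the semi-abelian fibers yields smoothness of $\underline{\mathrm{Pic}}^0_{X/S}$ over $S$.
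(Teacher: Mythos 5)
Your reduction of the statement to flatness via the fibral criterion is fine (it is also how the paper's proof ends), but both of the steps you use to obtain flatness have genuine gaps. The pivotal claim that ``a finite-type morphism to a reduced Noetherian base with geometrically smooth fibers of locally constant relative dimension is flat'' is false: the normalization of a nodal curve is of finite type over a reduced Noetherian base, has geometrically smooth fibers of constant dimension zero (two reduced points over the node, one elsewhere), and is not flat --- nor even open. Over a base that is merely reduced, constancy of fiber dimension gives neither flatness nor universal openness; criteria of this shape need the base to be geometrically unibranch (Chevalley's criterion) or regular (miracle flatness), neither of which is available here. The parenthetical fallback is not a theorem either: local freeness of $R^1f_*\mathcal{O}_X$ with base-change compatibility says nothing about the obstruction space $H^2(X_s,\mathcal{O}_{X_s})$, and smoothness of $\underline{\mathrm{Pic}}^0$ can fail even when those conditions are vacuous (over a field, in positive characteristic); since the proposition is used in positive and mixed characteristic, Cartier's theorem cannot be invoked. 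Finally, the step you yourself flag as the ``technical heart'' --- that a jump in $h^1(\mathcal{O}_{X_s})$ would force a $\mathbb{G}_a$-part in $\underline{\mathrm{Pic}}^0_{X_s/k}$ --- is left as a heuristic with a pointer to \cite{TDTEVI}, so the one place where the semi-abelian hypothesis must do real work is never carried out. (A smaller point: openness of $\underline{\mathrm{Pic}}^0_{X/S}$ in $\underline{\mathrm{Pic}}_{X/S}$ is not contained in Remark \ref{rem:rep}; the paper deduces it from \cite[Lem. 4.2.8 (i)]{brochardpicard} using integrality of the fibers.)

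For contrast, the paper makes the semi-abelian hypothesis act through a different and sharper mechanism, which produces exactly the property that fiber-dimension constancy cannot: universal openness. After localizing so that some prime $\ell$ is invertible on $S$, the torsion subspaces $\underline{\mathrm{Pic}}_{X/S}[\ell^n]$ are \'etale over $S$, because the deformation and obstruction spaces of a line bundle are uniquely $\ell^n$-divisible; semi-abelianness of the fibers says precisely that $\ell$-power torsion is schematically dense in each $\underline{\mathrm{Pic}}^0_{X_s/k}$ (this is where $\mathbb{G}_a$-parts are excluded, since $\mathbb{G}_a[\ell]=0$). Density of points lying on subspaces \'etale over $S$ yields universal openness of $\underline{\mathrm{Pic}}_{X/S}\to S$ along each $\underline{\mathrm{Pic}}^0_{X_s/k}$, and then \cite[Cor.\ 15.2.3]{EGAIV3} (reduced base, universally open, geometrically integral fibers) gives flatness, hence smoothness. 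To salvage your outline you would either have to replace your flatness criterion by this openness route, or genuinely prove the no-jump statement and then still explain how constant fiber dimension yields flatness over a base that is only assumed reduced.
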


\begin{proof}
The claim is local on $S$, so we may assume that $S=\Spec A$ with some prime
$\ell$ invertible in $A$.
For any $n$, the deformation and obstruction spaces of a line bundle on $X$ are uniquely $\ell^n$-divisible, this shows that $\underline{\mathrm{Pic}}_{X/S}[\ell^n] \to S$ is formally \'etale and hence \'etale over $S$.
The semi-abelian hypothesis guarantees that the union of torsion $\underline{\mathrm{Pic}}_{X/S}[\ell^n]$ (for all $n>0$) is schematically dense when intersected with each fiber $\underline{\mathrm{Pic}}^0_{X_s/k}$.
In turn, this implies that $\underline{\mathrm{Pic}}_{X/S} \to S$ is universally open over each point of $\underline{\mathrm{Pic}}^0_{X_s/k}$ for each geometric point of $S$.

Since the geometric fibers $\underline{\mathrm{Pic}}^0_{X_s/k}$ are integral, \cite[Lem. 4.2.8 (i)]{brochardpicard} implies that $\underline{\mathrm{Pic}}^0_{X/S}$
is an open subspace of $\underline{\mathrm{Pic}}_{X/S}$.
It follows that $\underline{\mathrm{Pic}}^0_{X/S} \to S$ is flat, by \cite[Cor.\ 15.2.3]{EGAIV3}.
Thus $\underline{\mathrm{Pic}}^0_{X/S} \to S$ is smooth, as desired.
\end{proof}

Recall that a scheme $X$ is \emph{semi-normal} if it is reduced and every finite bijective morphism of schemes $f\colon Y \to X$ which induces an isomorphism on all residue fields is an isomorphism.

\begin{remark} \label{rem:seminormal} Let $S$ be a reduced Noetherian $\mathbb{Q}$-scheme. If the fibers of a proper and flat morphism $X \to S$ are (semi-)normal then $\underline{\mathrm{Pic}}^0_{X_s/k}$ is a (semi-)abelian variety; see \cite[Part 5, Thm. 5.4, Rem. 5.8]{zbMATH02229020} and \cite[Intro.\ and Cor.\ 8]{geisser}.  \end{remark}


Over a reduced Noetherian $\mathbb{Q}$-scheme $S$, a proper and flat morphism $X \to S$ with semi-normal fibers must have $S$-smooth $\underline{\mathrm{Pic}}^0_{X/S}$ by Proposition \ref{semiabelianfiber}. However, the next example shows that this can fail when the fibers of $X/S$ are not semi-normal.

\begin{Example} \label{ex:H1jump}
There is a flat family of projective surfaces $X\to S$ over a variety $S$, cohomologically flat in dimension zero, such that
$\underline{\mathrm{Pic}}^0_{X/S}=\underline{\mathrm{Pic}}^{\tau}_{X/S}$ has varying fiber dimension over $S$.
Let $(E,\infty)$ be an elliptic curve over $\mathbb{C}$.
We take $S=E$, the constant family $V=E\times S\to S$, and the line bundle $L=\cO_V(\Delta-\infty)$, of degree $0$ on fibers. Now consider $\mathbb{P}(\cO_V\oplus L)\to V$, take $Z$ to be the zero-section in $L\subset \mathbb{P}(\cO_V\oplus L)$, and pinch $\mathbb{P}(\cO_V\oplus L)$ along the first-order thickening $Z^{(1)}$ of $Z$ to obtain $X \to V$, with rational cuspidal curves as fibers. Now $X\to S$ has $H^1(X_s,\cO_{X_s})$ of dimension $1$ for $s \neq \infty$ and $2$ for $s=\infty$.
\end{Example}


\begin{lemma} \label{lem:tauclosed} Let $\pi \colon X \to S$ be a proper and flat morphism over a Noetherian scheme $S$. If the geometric fibers of $\pi$ are connected and normal, then $\underline{\mathrm{Pic}}^{\tau}_{X/S} \to \underline{\mathrm{Pic}}_{X/S}$ is representable by closed immersions. \end{lemma}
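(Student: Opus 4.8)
The plan is to show that $\underline{\mathrm{Pic}}^{\tau}_{X/S} \to \underline{\mathrm{Pic}}_{X/S}$ is both an open immersion and a closed immersion onto a union of connected components; since these conditions are local on the source, it suffices to verify them after suitable base change and on fibers. First I would invoke Remark \ref{rem:tau}, which tells us that the inclusion $\underline{\mathrm{Pic}}^{\tau}_{X/S} \subset \underline{\mathrm{Pic}}_{X/S}$ is already representable by open immersions. Thus the remaining content of the lemma is to upgrade ``open'' to ``closed'' — equivalently, to show that $\underline{\mathrm{Pic}}^{\tau}_{X/S}$ is also \emph{closed} in $\underline{\mathrm{Pic}}_{X/S}$, so that it is a union of connected components.

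The key geometric input is the normality and connectedness of the geometric fibers. Over a geometric point $s$ of $S$, the hypotheses guarantee (by the theory recalled in Remark \ref{rem:seminormal}, since normal implies semi-normal) that $\underline{\mathrm{Pic}}^0_{X_s/k}$ is an abelian variety, hence \emph{proper}. The point is that for normal proper connected fibers, $\underline{\mathrm{Pic}}^{\tau}_{X_s/k}$ coincides with the torsion-plus-identity-component structure and, crucially, is proper over $k$: it is a finite union of translates of the abelian variety $\underline{\mathrm{Pic}}^0_{X_s/k}$ by the torsion points computed in Remark \ref{thmofbase}. Therefore each fiber $\underline{\mathrm{Pic}}^{\tau}_{X_s/k} \to \Spec k$ is proper. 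The strategy is then to deduce properness of $\underline{\mathrm{Pic}}^{\tau}_{X/S} \to S$ from fiberwise properness. Because $\underline{\mathrm{Pic}}^{\tau}_{X/S}$ is open in $\underline{\mathrm{Pic}}_{X/S}$ and, by Remark \ref{thmofbase}, is $[n]^{-1}(\underline{\mathrm{Pic}}^0_{X/S})$ for a uniform $n$, it is of finite type and separated over $S$; combining finite type, separatedness, and fiberwise properness should give that $\underline{\mathrm{Pic}}^{\tau}_{X/S} \to S$ is proper (after checking flatness or at least using a fiberwise criterion for properness along the lines of \cite[EGA IV]{EGAIV3}).

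Once properness over $S$ is established, the conclusion follows formally: the inclusion $\underline{\mathrm{Pic}}^{\tau}_{X/S} \hookrightarrow \underline{\mathrm{Pic}}_{X/S}$ is an open immersion that is simultaneously proper (being the composite of a proper morphism to $S$ with a separated structure), hence it is both open and closed, so it is a closed immersion onto a union of connected components. The main obstacle I anticipate is the passage from fiberwise properness to properness over the base: properness is not in general a fiberwise condition without some flatness or finite-presentation hypotheses, so I would need either to establish that $\underline{\mathrm{Pic}}^{\tau}_{X/S} \to S$ is flat (perhaps via a smoothness argument as in Proposition \ref{semiabelianfiber}, using that normal fibers give abelian $\underline{\mathrm{Pic}}^0$) or to apply a valuative/constructibility criterion that allows fiberwise properness to globalize. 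This is precisely where the normality hypothesis does its work, ensuring the identity component is an \emph{abelian} (proper) variety rather than merely semi-abelian, and it is the step that most needs care.
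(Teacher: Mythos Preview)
Your overall strategy coincides with the paper's: exploit that the open immersion $\underline{\mathrm{Pic}}^{\tau}_{X/S} \hookrightarrow \underline{\mathrm{Pic}}_{X/S}$ has proper fibers under the normality hypothesis, then upgrade ``open'' to ``closed''. The obstacle you flag---globalizing fiberwise properness---is exactly the crux, but your first proposed fix does not work in the stated generality. Remark \ref{rem:seminormal} and Proposition \ref{semiabelianfiber} are both confined to $\mathbb{Q}$-schemes (they need $\underline{\mathrm{Pic}}^0_{X_s/k}$ to be semi-abelian, hence smooth), whereas in characteristic $p$ the identity component can be nonreduced even when $X_s$ is normal, so $\underline{\mathrm{Pic}}^{\tau}_{X/S} \to S$ need not be flat. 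Since the lemma carries no characteristic hypothesis, the flatness route is unavailable. (What \emph{does} survive in all characteristics is the properness of each fiber $\underline{\mathrm{Pic}}^0_{X_s/k}$, because for normal proper $X_s$ the reduced identity component is an abelian variety; the paper cites a characteristic-free source for this rather than Remark \ref{rem:seminormal}.)

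The paper instead carries out your second suggestion. Since an open immersion into a locally Noetherian space is closed if and only if it is stable under specialization, one may base change to $S$ a DVR. The hypotheses then make $\underline{\mathrm{Pic}}_{X/S}$ a separated scheme, and \cite[Cor.\ 15.6.8]{EGAIV3} gives directly that the open subscheme $\underline{\mathrm{Pic}}^0_{X/S}$, having proper fibers, is closed in $\underline{\mathrm{Pic}}_{X/S}$; finally $\underline{\mathrm{Pic}}^{\tau}_{X/S} = [n]^{-1}(\underline{\mathrm{Pic}}^0_{X/S})$ is closed by Remark \ref{thmofbase}. This EGA input is the missing ingredient in your outline, and it bypasses any flatness assumption on the Picard functor.
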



\begin{proof} It suffices to show the open immersion $\underline{\mathrm{Pic}}^{\tau}_{X/S} \to \underline{\mathrm{Pic}}_{X/S}$ is closed under specializations, so we may assume $S$ is a DVR. Note that $\underline{\mathrm{Pic}}_{X/S}$ is separated by \cite[Tag 0DNJ]{stacks-project}. Then $\underline{\mathrm{Pic}}_{X/S}$ is a scheme by \cite[Thm. 4B]{zbMATH03448703}. Now $\underline{\mathrm{Pic}}^0_{X/S} \to S$ has proper fibers (see \cite[Part 5, Rem. 5.8]{zbMATH02229020}) so by \cite[Cor. 15.6.8]{EGAIV3} $\underline{\mathrm{Pic}}^0_{X/S}$ is closed in $\underline{\mathrm{Pic}}_{X/S}$. By Remark \ref{thmofbase}, this implies the inclusion $\underline{\mathrm{Pic}}^{\tau}_{X/S} \subset \underline{\mathrm{Pic}}_{X/S}$ is closed. \end{proof}

\vspace{-.6cm}
\section{Mittag-Leffler systems}
\label{sec:mittagleffler}
\begin{definition} An inverse system of sets $(S_i)_{i \in \mathbb{N}}$ is said to have the \emph{Mittag-Leffler property} if for every $i \in \mathbb{N}$, there is an $N_i \in \mathbb{N}$ with $\mathrm{Im}[S_n \to S_i]=\mathrm{Im}[S_{N_i} \to S_i]$ for every $n \geq N_i$.
\end{definition}

\begin{Example} \label{finitevects} The following statements are straightforward to verify. \begin{enumerate} 
\item Any inverse system of finite sets, or finite dimensional vector spaces with linear connecting maps, is Mittag-Leffler.
\item If $(B_n), (C_n)$ are two inverse systems of sets such that $B_n \to C_n$ are surjections, and $(B_n)$ is Mittag-Leffler, then so is $(C_n)$.
\item If $(A_n), (B_n)$ are two inverse systems of sets with inclusions $\iota_n\colon A_n \to B_n$ for all $n$ and connecting maps $b_{n,m}\colon B_n \to B_m$ with the property that $b_{n,m}^{-1}(A_m)=A_n$ for every $n\geq m$, and $(B_n)$ is Mittag-Leffler, then so is $(A_n)$.
\item If 
\[0 \to (A_n) \to (B_n) \to (C_n) \to 0\]
is an exact sequence of inverse systems of abelian groups and $(A_n)$
and $(C_n)$ are Mittag-Leffler, then so is $(B_n)$. 
\item If $(A_n)$ is an inverse system of abelian groups which is Mittag-Leffler, then ${\invlim}^1A_n=0$. \end{enumerate} \end{Example}

\begin{proposition}
\label{elementaryPic}
Let $X$ be proper
over an $I$-adically complete Noetherian ring $A$, with $X_n=X \times_{\Spec A} \Spec(A/I^{n+1})$ and let $\mathfrak{X}$ denote the associated formal scheme.
In the factorization
\[ \mathrm{Pic}(X)\to \mathrm{Pic}(\mathfrak{X})\to \invlim \mathrm{Pic}(X_n). \]
of the natural restriction homomorphism, both maps are isomorphisms.\end{proposition}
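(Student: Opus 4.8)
The plan is to analyze the two arrows separately. For the first arrow $\mathrm{Pic}(X)\to\mathrm{Pic}(\mathfrak{X})$, note that by Proposition~\ref{formalabeliancoho} the group $\mathrm{Pic}(\mathfrak{X})=H^1(\mathfrak{X},\mathbb{G}_m)$ is the set of isomorphism classes of $(\mathbb{G}_{m,X_n})$-torsors, that is, of compatible systems of line bundles $(L_n)$ on the $X_n$ equipped with the transition isomorphisms $L_{n+1}|_{X_n}\cong L_n$; equivalently, of invertible objects of $\invlim\mathrm{Coh}(X_n)$. I would invoke the Grothendieck existence theorem, which gives an equivalence $\mathrm{Coh}(X)\xrightarrow{\sim}\invlim\mathrm{Coh}(X_n)$. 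This equivalence restricts to invertible sheaves: the algebraization of an invertible formal sheaf is a coherent sheaf whose invertibility may be checked at the closed points of $X$, and these all lie on $X_0$ because $I$ lies in the Jacobson radical of the complete ring $A$ and $X\to\Spec A$ is universally closed. Hence the first arrow is an isomorphism.

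For the second arrow $\mathrm{Pic}(\mathfrak{X})\to\invlim\mathrm{Pic}(X_n)$, I would specialize the exact sequence~\eqref{eqn.prelimexact}, taking $X_0$ for the base scheme, $G_n=\mathbb{G}_{m,X_n}$ and $j=1$. Since $H^0(X_n,\mathbb{G}_m)=\Gamma(X_n,\mathcal{O}_{X_n})^*$ and $H^1(X_n,\mathbb{G}_m)=\mathrm{Pic}(X_n)$, it reads
\[0\to{\invlim}^1\Gamma(X_n,\mathcal{O}_{X_n})^*\to\mathrm{Pic}(\mathfrak{X})\to\invlim\mathrm{Pic}(X_n)\to 0,\]
and the third map is exactly the arrow in question. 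Thus it is an isomorphism once ${\invlim}^1\Gamma(X_n,\mathcal{O}_{X_n})^*=0$. (One could instead try to deduce this arrow from the other two by two-out-of-three, but the standard proof that the composite $\mathrm{Pic}(X)\to\invlim\mathrm{Pic}(X_n)$ is an isomorphism passes through the very same vanishing, so I prefer to establish it directly.)

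The crux, and the step I expect to be the main obstacle, is therefore the vanishing ${\invlim}^1\Gamma(X_n,\mathcal{O}_{X_n})^*=0$. Write $B=\Gamma(X,\mathcal{O}_X)$; properness makes $B$ a finite $A$-algebra, hence Noetherian and $I$-adically complete, and the theorem on formal functions gives $B=\invlim\Gamma(X_n,\mathcal{O}_{X_n})$. The natural ring maps $B\to\Gamma(X_n,\mathcal{O}_{X_n})$ kill $I^{n+1}B$ and so factor through $B/I^{n+1}B$; their images $C_n:=\mathrm{Im}(B\to\Gamma(X_n,\mathcal{O}_{X_n}))$ form a tower of quotient rings of $B$ with surjective transition maps, and the Artin--Rees comparison underlying formal functions identifies $C_n$ with the stable image of $(\Gamma(X_n,\mathcal{O}_{X_n}))$ and shows that the kernel-adic and $I$-adic topologies on $B$ agree. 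Consequently the tower $(\Gamma(X_n,\mathcal{O}_{X_n}))$ is isomorphic, as a pro-object in the category of rings, to $(B/I^{n+1}B)$, the comparison being realized by honest ring homomorphisms through the $C_n$.

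Since forming unit groups is a functor, it carries this pro-isomorphism of rings to a pro-isomorphism of abelian groups, and since ${\invlim}^1$ is invariant under pro-isomorphism of towers, it suffices to prove the vanishing for the $I$-adic tower. But $(B/I^{n+1}B)^*$ has surjective transition maps: each $B/I^{n+2}B\to B/I^{n+1}B$ is a surjection of rings with nilpotent kernel $I^{n+1}B/I^{n+2}B$, and units lift along such a map. A tower with surjective transition maps is Mittag--Leffler, whence ${\invlim}^1(B/I^{n+1}B)^*=0$ by Example~\ref{finitevects}(5), and the desired vanishing follows. The delicate point throughout is to ensure that the comparison between $(\Gamma(X_n,\mathcal{O}_{X_n}))$ and the $I$-adic tower is made by ring maps---this is what lets it survive the non-additive passage to units---which is why I route it through the images $C_n$ rather than through a bare $A$-linear Artin--Rees isomorphism.
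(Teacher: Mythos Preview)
Your proof is correct and follows the same overall plan as the paper's: Grothendieck existence for the first arrow, and the short exact sequence \eqref{eqn.prelimexact} at $j=1$ for the second, reducing to the vanishing of ${\invlim}^1 H^0(X_n,\mathbb{G}_m)$. The difference lies only in how that vanishing is obtained. The paper cites \cite[Cor.~4.1.7]{EGAIII1} directly for the Mittag--Leffler property of $(H^0(X_n,\mathcal{O}_{X_n}))$ and then invokes Example~\ref{finitevects}(3): since $X_m\hookrightarrow X_n$ is a nilpotent thickening, a global section is a unit if and only if its restriction is, so the Mittag--Leffler property passes from the rings to their unit groups in one line. You instead rebuild the content of that EGA corollary by hand---the pro-isomorphism of $(\Gamma(X_n,\mathcal{O}_{X_n}))$ with the $I$-adic tower on $B=H^0(X,\mathcal{O}_X)$ via the stable images $C_n$---and then transport the problem to $(B/I^{n+1}B)^*$, where surjectivity of transition maps is immediate. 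Both routes rest on the same Artin--Rees input; the paper's is shorter because Example~\ref{finitevects}(3) lets one pass to units without ever establishing the full pro-isomorphism, while your route has the mild advantage of making explicit \emph{why} that EGA result holds.
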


\begin{proof}
The left-hand map is
an isomorphism by the Grothendieck existence theorem.
The right-hand map is an isomorphism if and only if the
${\invlim}^1$ term in \eqref{eqn.prelimexact} (for $j=1$) vanishes.
By \cite[Cor. 4.1.7]{EGAIII1} the system $(H^0(X_n,\mathcal{O}_{X_n}))$ enjoys the Mittag-Leffler property,
hence the same is true for the system $(H^0(X_n,\mathbb{G}_m))$ by Example \ref{finitevects} (3), therefore the
${\invlim}^1$ term vanishes.
\end{proof}


\begin{Example} \label{surjconnectingmaps}
Any inverse system of sets whose connecting maps are surjective automatically satisfies the Mittag-Leffler property.
As such, if $\pi \colon X \to S=\Spec A$ is proper, flat, and cohomologically flat in dimension $0$, where
$A$ a reduced $I$-adically complete Noetherian ring,
and $\underline{\mathrm{Pic}}^0_{X_s/k}$ is a semi-abelian variety for every
geometric point $s\colon \Spec k \to S$, then $(\mathrm{Pic}^0(X_n))$ is a Mittag-Leffler system by Proposition \ref{semiabelianfiber}, since $\underline{\mathcal{P}ic}^{0}_{X/S}=\underline{\mathcal{P}ic}_{X/S} \times_{\underline{\mathrm{Pic}}_{X/S}} \underline{\mathrm{Pic}}^0_{X/S} \to \underline{\mathrm{Pic}}^0_{X/S}$ is smooth (see Remark \ref{rem:rep}). 
\end{Example}

Next we show that $(\mathrm{Pic}(X_n))$ is eventually surjective under certain conditions. 

\begin{proposition} \label{prop:nonlocal} Let $\pi \colon X \to S=\Spec A$ be a proper and flat morphism over a reduced $I$-adically complete Noetherian ring $A$ containing $\mathbb{Q}$. If the fibers of $\pi$ are normal, then the connecting maps in the system $(\mathrm{Pic}(X_n))$ are eventually surjective. In particular, it is a Mittag-Leffler system and $\phi\colon H^2(X, \mathbb{G}_m) \to \invlim H^2(X_n,\mathbb{G}_m)$ is injective.\end{proposition}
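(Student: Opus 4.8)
The plan is to reduce the surjectivity of the connecting maps $\mathrm{Pic}(X_{n+1}) \to \mathrm{Pic}(X_n)$ to the smoothness and the $\tau$-versus-$0$ comparison for the Picard functor, exploiting the $\mathbb{Q}$-structure and the normality of fibers. First I would invoke Remark \ref{rem:seminormal}: since $A$ contains $\mathbb{Q}$ and the fibers of $\pi$ are normal (hence semi-normal), each $\underline{\mathrm{Pic}}^0_{X_s/k}$ is a (semi-)abelian variety, so Proposition \ref{semiabelianfiber} applies and $\underline{\mathrm{Pic}}^0_{X/S}$ is $S$-smooth. Note that normal fibers are in particular reduced, so $\pi$ is cohomologically flat in dimension zero (Remark \ref{rem:rep}), and $\underline{\mathrm{Pic}}_{X/S}$ is an algebraic space over $S$. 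Smoothness of $\underline{\mathcal{P}ic}^0_{X/S}\to \underline{\mathrm{Pic}}^0_{X/S}$ (as in Example \ref{surjconnectingmaps}) then yields that the system $(\mathrm{Pic}^0(X_n))$ has surjective connecting maps.

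The heart of the argument is to upgrade this from $\mathrm{Pic}^0$ to $\mathrm{Pic}$. Here I would pass through $\mathrm{Pic}^\tau$, using Lemma \ref{lem:tauclosed}: since the geometric fibers are connected and normal, $\underline{\mathrm{Pic}}^\tau_{X/S}\to \underline{\mathrm{Pic}}_{X/S}$ is representable by closed immersions, and it is also open (Remark \ref{rem:tau}), hence an open-and-closed immersion. Because $\underline{\mathrm{Pic}}^\tau_{X/S}=[N]^{-1}(\underline{\mathrm{Pic}}^0_{X/S})$ for suitable $N$ (Remark \ref{thmofbase}) and multiplication by $N$ is an automorphism of the $\mathbb{Q}$-structure at the level of tangent/obstruction theory, smoothness of $\underline{\mathrm{Pic}}^0_{X/S}$ propagates to smoothness of $\underline{\mathrm{Pic}}^\tau_{X/S}$; this gives surjective connecting maps for $(\mathrm{Pic}^\tau(X_n))$ as well. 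For the full Picard group, the key point is that the N\'eron-Severi quotient $\mathrm{NS}(X_0)$ is a \emph{finitely generated} abelian group (Remark \ref{rem:rankmakessense}), so the images of $\mathrm{NS}(X_{n+1})\to\mathrm{NS}(X_0)$ form an ascending-then-eventually-stabilizing chain of subgroups; combined with the short exact sequences relating $\mathrm{Pic}$, $\mathrm{Pic}^\tau$, and the finitely generated quotient, this forces the connecting maps of $(\mathrm{Pic}(X_n))$ to be surjective for $n$ large.

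I expect the \textbf{main obstacle} to be the last step: controlling the N\'eron-Severi quotient along the tower and showing that eventual surjectivity at the $\mathrm{Pic}^\tau$ level plus finite generation of $\mathrm{NS}$ assembles into eventual surjectivity for $\mathrm{Pic}$. The subtlety is that a class in $\mathrm{Pic}(X_n)$ need not lift unless its N\'eron-Severi image lies in the image of $\mathrm{NS}(X_{n+1})$; so I would argue that the chain of subgroups $\mathrm{Im}[\mathrm{NS}(X_n)\to\mathrm{NS}(X_0)]$ stabilizes (finite generation, descending chain), and then lift a class by first adjusting its N\'eron-Severi part using a line bundle that exists at a higher level and deforms down, and then correcting the resulting $\mathrm{Pic}^\tau$-discrepancy using the surjectivity already established there.

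Once eventual surjectivity of $(\mathrm{Pic}(X_n))$ is in hand, the Mittag-Leffler property is immediate (Example \ref{surjconnectingmaps}), whence ${\invlim}^1\mathrm{Pic}(X_n)=0$ by Example \ref{finitevects} (5). Combining this with the exact sequence \eqref{eqn.introexact} and the formal injectivity $H^2(X,\mathbb{G}_m)\hookrightarrow H^2(\mathfrak{X},\mathbb{G}_m)$ of Proposition \ref{formalinjectivity} shows $\phi$ is injective, completing the proof.
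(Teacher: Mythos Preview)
Your outline closely parallels the paper's approach---reduce to $\mathrm{Pic}^0$, then $\mathrm{Pic}^\tau$, then control the N\'eron--Severi quotient---but there is a genuine gap at the step you yourself flag as the obstacle.

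You write that the chain $\mathrm{Im}[\mathrm{NS}(X_n)\to\mathrm{NS}(X_0)]$ stabilizes by ``finite generation, descending chain.'' This is false as stated: a descending chain of subgroups of a finitely generated abelian group need not stabilize (think of $2^n\mathbb{Z}\subset\mathbb{Z}$). The paper's argument supplies the missing ingredient: with $G_n=\mathrm{Pic}(X_n)/\mathrm{Pic}^\tau(X_n)$, the connecting maps $G_{n}\hookrightarrow G_{n-1}$ are injective, and whenever the inclusion is \emph{proper} the rank must strictly drop. This is because if $L\in\mathrm{Pic}(X_{n-1})$ fails to lift to $X_n$, its obstruction class lies in a $\mathbb{Q}$-vector space, so no nonzero multiple $L^{\otimes m}$ lifts either; hence the class of $L$ in $G_{n-1}$ is non-torsion modulo $G_n$. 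Since the rank is a nonnegative integer bounded by $\mathrm{rk}(G_0)$, the chain stabilizes after finitely many steps. Your proposal never invokes this rank-drop mechanism, and without it the stabilization claim is unjustified.

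Two smaller points: (i) you invoke Lemma \ref{lem:tauclosed} assuming geometrically connected fibers, but the hypothesis only gives normal fibers; the paper first replaces $S$ by the Stein factorization target $S'=\Spec H^0(X,\mathcal{O}_X)$ (finite \'etale over $S$ since fibers are geometrically reduced) to arrange geometrically integral fibers. (ii) You cite Remark \ref{rem:rankmakessense} for finite generation of $\mathrm{NS}(X_0)$, but $X_0$ sits over $A/I$, which is not a field; the paper instead shows $G_0$ is finitely generated by embedding it into $\prod_i \mathrm{Pic}(X_{K_i})/\mathrm{Pic}^\tau(X_{K_i})$ over the generic points of $(\Spec A/I)_{\mathrm{red}}$, using precisely Lemma \ref{lem:tauclosed} to guarantee the $\tau$-locus is closed and hence the embedding is well-defined.
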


\begin{proof} We begin by reducing to the case when $\pi$ has geometrically integral fibers. Since $X$ is reduced by \cite[Prop. 6.8.3]{EGAIV2}, the spectrum $S'=\Spec A'$ of its ring of global sections is reduced. Moreover, $\pi$ is cohomologically flat in dimension zero (see Example \ref{rem:rep}) so $\pi_*\mathcal{O}_X$ is a locally free $\mathcal{O}_S$-module. Since the geometric fibers of $\pi$ are reduced, it follows that the morphism $S' \to S$ is finite \'etale and therefore we may replace $S$ by $S'$. 

As explained in Remark \ref{rem:seminormal} and Example \ref{surjconnectingmaps}, the system $(\mathrm{Pic}^0(X_n))$ has surjective connecting maps. Since the obstruction to lifting a line bundle lies in a $\mathbb{Q}$-vector space and $\mathrm{Pic}^{\tau}(X_n)/\mathrm{Pic}^0(X_n)$ is torsion (see Remark \ref{thmofbase}), it follows that $(\mathrm{Pic}^{\tau}(X_n))$ also has surjective connecting maps. We set $G_n=\mathrm{Pic}(X_n)/\mathrm{Pic}^{\tau}(X_n)$, and we may conclude by showing $(G_n)$ is eventually constant. First we note that the connecting maps $G_n \to G_{n-1}$ are all inclusions, because $h\colon \mathrm{Pic}(X_n) \to \mathrm{Pic}(X_{n-1})$ satisfies $h^{-1}(\mathrm{Pic}^{\tau}(X_{n-1}))=\mathrm{Pic}^{\tau}(X_n)$ by definition.

Consider the inclusion of the generic points of $T=(\Spec A/I)_{\mathrm{red}}$, $T'=\bigsqcup_{i=1}^l \Spec K_i \to T$. Now observe that if $g\colon \mathrm{Pic}(X_0) \to \mathrm{Pic}(X_{T'})$ then $g^{-1}(\mathrm{Pic}^{\tau}(X_{T'}))=\mathrm{Pic}^{\tau}(X_0)$, because the locus on $\Spec A/I$ where a line bundle is numerically trivial is open (see Remark \ref{rem:tau}), and closed by Lemma \ref{lem:tauclosed}. It follows that $G_0$ is finitely generated because it is contained in the finitely generated group $\mathrm{Pic}(X_{T'})/\mathrm{Pic}^{\tau}(X_{T'})$ (see \cite[Thm.\ 3.4.1 (i)]{brochardfiniteness}). Thus, if $G_{n} \subset G_{n-1}$ is the inclusion of a proper subgroup, then the rank must drop because the obstruction to lifting a line bundle lies in a $\mathbb{Q}$-vector space. It follows that $(G_n)$ is eventually constant, as desired. \end{proof} 

The following example shows that the normality hypothesis above is necessary.

\begin{Example} \label{ex:gainandlose}
There is a projective family of surfaces over $\mathbb{C}[[t]]$, cohomologically flat in degree $0$, such that the system $(\mathrm{Pic}(X_n))$ is not eventually surjective.
We may obtain this from the family $X\to S$ in Example \ref{ex:H1jump}, by base change to the completion of
$\mathcal{O}_{S,\infty}$.
Setting $X_n=X\times_S\Spec(\cO_S/\mathfrak{m}_{\infty}^{n+1})$, one may calculate
\[ \Pic(X_n)\cong \mathbb{Z}^2\times E(\cO_S/\mathfrak{m}_{\infty}^{n+1}) \times \mathfrak{m}_{\infty}^n/\mathfrak{m}_{\infty}^{n+1}. \]
On the last factor, the maps in the inverse system are trivial.
\end{Example}

We record the following lemma, which will help clarify how the limiting behavior of the N\'eron-Severi group compares with that of the group of line bundles modulo numerical equivalence. 

\begin{lemma} \label{prop:nsvsnum} Let $X \to \Spec A$ be a proper morphism of schemes, where $(A,\mathfrak{m})$ is a  complete local Noetherian ring. The natural map
\[{\invlim}^i \mathrm{NS}(X_n/S_n) \to {\invlim}^i \mathrm{Pic}(X_n)/\mathrm{Pic}^{\tau}(X_n)\]
is a rank-preserving surjection between finitely generated abelian groups for $i=0$ and is an isomorphism for $i=1$. \end{lemma}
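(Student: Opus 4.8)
The plan is to exhibit the comparison map as the map induced on ${\invlim}^i$ by a morphism of inverse systems whose kernel is a system of uniformly bounded \emph{finite} groups, and then read off both assertions from the six-term $\invlim$ sequence. First I would apply Remark \ref{rem:tauneron} to each proper morphism $X_n\to S_n$ (here $S_n=\Spec(A/\mathfrak{m}^{n+1})$ is Artinian local, hence complete local Noetherian, with closed fibre again $X_0$). This yields, for every $n$, a short exact sequence
\[ 0 \to K_n \to \mathrm{NS}(X_n/S_n) \to \mathrm{Pic}(X_n)/\mathrm{Pic}^{\tau}(X_n) \to 0, \]
where $K_n=\mathrm{Pic}^{\tau}(X_n)/r_n^{-1}(\mathrm{Pic}^0(X_0))$ and $r_n\colon\mathrm{Pic}(X_n)\to\mathrm{Pic}(X_0)$ is restriction to the closed fibre. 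As recorded in Remark \ref{rem:tauneron}, each $K_n$ is finite and injects, via $r_n$, into the fixed finite group $\mathrm{Pic}^{\tau}(X_0)/\mathrm{Pic}^0(X_0)$ (whose finiteness is Remark \ref{thmofbase}). Since all three families of groups and all three sequences are induced by the restriction maps, they assemble into a short exact sequence of inverse systems, and the map of the lemma is exactly the one induced on ${\invlim}^i$ by the right-hand quotient.

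Next I would feed this into the exact sequence relating $\invlim$ and ${\invlim}^1$. The system $(K_n)$ consists of finite groups, so it is Mittag-Leffler and ${\invlim}^1 K_n=0$ by Example \ref{finitevects}(1),(5). Hence the six-term sequence breaks into
\[ 0 \to \invlim K_n \to \invlim \mathrm{NS}(X_n/S_n) \to \invlim \mathrm{Pic}(X_n)/\mathrm{Pic}^{\tau}(X_n) \to 0 \]
together with an isomorphism ${\invlim}^1 \mathrm{NS}(X_n/S_n) \xrightarrow{\sim} {\invlim}^1 \mathrm{Pic}(X_n)/\mathrm{Pic}^{\tau}(X_n)$. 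The latter is precisely the assertion for $i=1$, with no further work required.

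For $i=0$ it remains to check that the displayed surjection is rank-preserving and that both groups are finitely generated. The injections $r_n\colon K_n\hookrightarrow \mathrm{Pic}^{\tau}(X_0)/\mathrm{Pic}^0(X_0)$ are compatible with the restriction connecting maps, so their images form a descending chain of subgroups of a fixed finite group; the chain stabilizes, $(K_n)$ is eventually constant, and $\invlim K_n$ is finite. A finite kernel forces the surjection to preserve rank. For finite generation I would use the identification $\mathrm{NS}(X_n/S_n)=\mathrm{Im}\bigl(\mathrm{Pic}(X_n)\to\mathrm{NS}(X_0)\bigr)$ from Definition \ref{defNS} and Remark \ref{rem:rankmakessense}. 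Because $\mathrm{Pic}(X_{n+1})\to\mathrm{Pic}(X_0)$ factors through $\mathrm{Pic}(X_n)$, these images form a descending chain $\cdots\subseteq \mathrm{NS}(X_{n+1}/S_{n+1})\subseteq \mathrm{NS}(X_n/S_n)\subseteq\mathrm{NS}(X_0)$ in the finitely generated, hence Noetherian, group $\mathrm{NS}(X_0)$, with the connecting maps being the inclusions. The chain stabilizes, so $\invlim \mathrm{NS}(X_n/S_n)$ is its stable value and is finitely generated, whence $\invlim \mathrm{Pic}(X_n)/\mathrm{Pic}^{\tau}(X_n)$ is finitely generated as a quotient of it.

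The only geometric inputs are the finiteness and uniform boundedness of the $K_n$ (theorem of the base, via Remarks \ref{thmofbase} and \ref{rem:tauneron}) and the finite generation of $\mathrm{NS}(X_0)$; the remainder is the formal homological algebra of $\invlim$ on a short exact sequence of systems. I expect the one point needing genuine care to be the verification that the three families form a short exact sequence of inverse systems compatible with the restriction maps — in particular, that the connecting maps of $(\mathrm{NS}(X_n/S_n))$ are literally the inclusions of subgroups of $\mathrm{NS}(X_0)$. This descending-chain description is what simultaneously supplies the stabilization, the finite generation, and (as a bonus, beyond what is claimed) the vanishing of both ${\invlim}^1$ terms.
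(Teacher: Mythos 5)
Your core mechanism is exactly the paper's: apply Remark \ref{rem:tauneron} to each $X_n \to S_n$ to obtain a short exact sequence of inverse systems with finite kernels (your $K_n$ are the paper's $F_n$), note that these kernels form a descending chain of finite groups so that ${\invlim}^1 K_n = 0$, and then read off surjectivity, the finiteness of the kernel (hence rank-preservation), and the $i=1$ isomorphism from the six-term $\invlim$--${\invlim}^1$ sequence. Up to that point your argument is correct and identical in substance to the paper's proof.

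However, one step of your finite-generation argument is genuinely false, and the ``bonus'' you extract from it is wrong. You claim the descending chain $\cdots \subseteq \mathrm{NS}(X_{n+1}/S_{n+1}) \subseteq \mathrm{NS}(X_n/S_n) \subseteq \mathrm{NS}(X_0)$ stabilizes because $\mathrm{NS}(X_0)$ is finitely generated, ``hence Noetherian.'' Noetherian is the \emph{ascending} chain condition and says nothing about descending chains: $\mathbb{Z} \supset p\mathbb{Z} \supset p^2\mathbb{Z} \supset \cdots$ is a strictly descending chain of subgroups of a finitely generated abelian group. Worse, in the present setting the chain genuinely fails to stabilize in general: for van Luijk's quartic over $\mathbb{Z}_p$ (Example \ref{ex:vL}) one has $\mathrm{Pic}^0(X_0)=0$ and $\mathrm{Pic}(X_n)\hookrightarrow \mathrm{Pic}(X_0)$, so $\mathrm{NS}(X_n/S_n)\cong \mathrm{Pic}(X_n)$ and ${\invlim}^1\mathrm{NS}(X_n/S_n)\cong \mathbb{Z}_p/\mathbb{Z}\neq 0$; more generally, the proof of Theorem \ref{mainp} exhibits a copy of $\mathbb{Z}_p/F$ inside ${\invlim}^1\mathrm{NS}(X_n/S_n)$ whenever the N\'eron--Severi rank jumps, and this non-vanishing is precisely the engine behind the non-injectivity examples of Section \ref{sec:ex}. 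So your claimed vanishing of both ${\invlim}^1$ terms cannot hold, and any proof relying on stabilization of this chain is broken. The lemma itself is unharmed: since the connecting maps of $(\mathrm{NS}(X_n/S_n))$ are inclusions, $\invlim \mathrm{NS}(X_n/S_n) = \bigcap_n \mathrm{NS}(X_n/S_n)$ is a subgroup of the finitely generated group $\mathrm{NS}(X_0)$, hence finitely generated. This, which is the paper's actual argument, replaces your stabilization step; the rest of your proof then goes through verbatim.
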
 

\begin{proof} As we saw in Remark \ref{rem:tauneron}, for every $n \geq 0$, there is a rank-preserving surjection 
\[\mathrm{NS}(X_n/S_n) \to \mathrm{Pic}(X_n)/\mathrm{Pic}^{\tau}(X_n)\] which implies $\mathrm{Pic}(X_n)/\mathrm{Pic}^{\tau}(X_n)$ is a finitely generated abelian group. Since the connecting maps in the system $(\mathrm{NS}(X_n/S_n))$ are all injective, the inverse limit $\invlim \mathrm{NS}(X_n/S_n)$ is a finitely generated abelian group. Indeed, it is a subgroup of the finitely generated abelian group $\mathrm{NS}(X_0/S_0)$. Moreover, the natural connecting maps between the finite abelian groups 
\[F_n=\mathrm{Ker}[\mathrm{NS}(X_n/S_n) \to \mathrm{Pic}(X_n)/\mathrm{Pic}^{\tau}(X_n)]\]
are inclusions because the same is true for $(\mathrm{NS}(X_n/S_n))$, hence $(F_n)$ is a decreasing sequence of finite abelian groups. Taking limits and noting that $\invlim^1F_n=0$, the result follows. \end{proof}


\begin{proposition} \label{nsalg}
Let $X \to S=\Spec A$ be a proper morphism, where $(A,\mathfrak{m})$ is a complete local Noetherian ring. Then $(\mathrm{Pic}^{\tau}(X_n))$ is Mittag-Leffler if and only if $(\mathrm{Pic}^0(X_n))$ is Mittag-Leffler.  \end{proposition}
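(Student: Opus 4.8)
The plan is to compare the two systems through the quotient $Q_n \eqdef \mathrm{Pic}^{\tau}(X_n)/\mathrm{Pic}^0(X_n)$ and to reduce everything to the formal properties of Mittag-Leffler systems recorded in Example \ref{finitevects}. The decisive observation is that, since each $A_n \eqdef A/\mathfrak{m}^{n+1}$ is Artinian local with residue field $k$, the base $\Spec A_n$ has a single (closed) point; hence every geometric point of $\Spec A_n$ factors through $k$, and the associated geometric fiber is the fixed scheme $\bar X \eqdef X_0\times_k\bar{k}$, independent of $n$. By Definition \ref{numtrivial}, both numerical and algebraic triviality over $\Spec A_n$ are tested on this single geometric fiber. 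Writing $\rho_n\colon \mathrm{Pic}(X_n)\to\mathrm{Pic}(\bar X)$ for restriction, I therefore expect the identities
\[ \mathrm{Pic}^{\tau}(X_n)=\rho_n^{-1}(\mathrm{Pic}^{\tau}(\bar X)), \qquad \mathrm{Pic}^{0}(X_n)=\rho_n^{-1}(\mathrm{Pic}^{0}(\bar X)), \]
valid for every $n$ (including $n=0$) directly from the definitions, fixing one algebraically closed $\bar k$ throughout and using that numerical triviality is insensitive to the choice of algebraically closed overfield.

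From these identities the behaviour of $(Q_n)$ follows at once. Composing $\rho_n$ with the projection $\mathrm{Pic}^{\tau}(\bar X)\to \bar Q\eqdef \mathrm{Pic}^{\tau}(\bar X)/\mathrm{Pic}^0(\bar X)$ gives an injection $Q_n\hookrightarrow \bar Q$, and these injections are compatible with the transition maps since $\rho_n=\rho_m\circ\rho_{n,m}$ for $n\ge m$, where $\rho_{n,m}\colon \mathrm{Pic}(X_n)\to \mathrm{Pic}(X_m)$ is restriction. By Remark \ref{thmofbase} the group $\bar Q$ is finite, so $(Q_n)$ is an inverse system of finite groups with \emph{injective} transition maps. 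In particular $(Q_n)$ is Mittag-Leffler by Example \ref{finitevects} (1), and the injectivity of $Q_n\to Q_m$ translates into the saturation property $\rho_{n,m}^{-1}(\mathrm{Pic}^0(X_m))=\mathrm{Pic}^0(X_n)$ for the inclusion of subsystems $(\mathrm{Pic}^0(X_n))\subseteq(\mathrm{Pic}^{\tau}(X_n))$.

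With these two facts in hand the equivalence is immediate. If $(\mathrm{Pic}^0(X_n))$ is Mittag-Leffler, then applying Example \ref{finitevects} (4) to the short exact sequence of inverse systems
\[ 0\to (\mathrm{Pic}^0(X_n))\to (\mathrm{Pic}^{\tau}(X_n))\to (Q_n)\to 0, \]
together with the Mittag-Leffler property of $(Q_n)$, shows that $(\mathrm{Pic}^{\tau}(X_n))$ is Mittag-Leffler. Conversely, if $(\mathrm{Pic}^{\tau}(X_n))$ is Mittag-Leffler, then the saturation property lets me invoke Example \ref{finitevects} (3) for the inclusion $(\mathrm{Pic}^0(X_n))\subseteq(\mathrm{Pic}^{\tau}(X_n))$ to conclude that $(\mathrm{Pic}^0(X_n))$ is Mittag-Leffler.

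The only real content, and the step I would take most care over, is the first paragraph: verifying that $\mathrm{Pic}^{\tau}$ and $\mathrm{Pic}^0$ of $X_n$ are genuinely computed on the constant geometric fiber $\bar X$. Everything else is a formal manipulation of Mittag-Leffler systems. The subtle point is confirming that every geometric point of $\Spec A_n$ reduces to the closed point, so that the two triviality conditions for $X_n$ really are detected on $\bar X$ for all $n$ simultaneously; once this is pinned down, the finiteness of $\bar Q$ from Remark \ref{thmofbase} and the injectivity of the transition maps of $(Q_n)$ are automatic, and the two implications drop out of Example \ref{finitevects}.
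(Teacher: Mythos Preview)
Your proof is correct and follows essentially the same route as the paper: both arguments introduce the quotient system $Q_n=\mathrm{Pic}^{\tau}(X_n)/\mathrm{Pic}^0(X_n)$, observe that the transition maps are injections into a fixed finite group (so $(Q_n)$ is a descending chain of finite groups, hence eventually constant), and then invoke Example \ref{finitevects} (3) and (4). The only cosmetic difference is that you anchor everything on the geometric fiber $\bar X$ while the paper anchors on $X_0$ and asserts $g^{-1}(\mathrm{Pic}^0(X_0))=\mathrm{Pic}^0(X_n)$ ``by definition''; your version makes that step a bit more explicit, but the content is identical.
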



\begin{proof}
For each $n$, we have an exact sequence
\[0 \to \mathrm{Pic}^0(X_n) \to \mathrm{Pic}^{\tau}(X_n) \to G_n \to 0\]
where the $G_n$ form a descending chain of finite abelian groups. Indeed, this is true for $n=0$ by the theorem of the base (see Remark \ref{thmofbase}). Moreover, if $g\colon \mathrm{Pic}^{\tau}(X_n) \to \mathrm{Pic}^{\tau}(X_0)$ denotes the restriction map, then $g^{-1}(\mathrm{Pic}^0(X_0))=\mathrm{Pic}^0(X_n)$, by definition, and $G_n \subset G_0$.
It follows that the system $(G_n)$ is eventually constant, and we obtain
the result by (3) and (4) of Example \ref{finitevects}. \end{proof}

\section{Lifting via strong Artin approximation}
\label{sec:elkik}
In this section we show that if $X \to S=\Spec A$ is proper and flat where $A$ is a complete local Noetherian ring, then the associated inverse system of abelian groups $(\mathrm{Pic}^0(X_n))$ is Mittag-Leffler. 

 
\begin{lemma} \label{Elkik} Let $\pi\colon X \to \Spec A$ be a morphism which is locally of finite type, where $(A,\mathfrak{m},k)$ is a complete local Noetherian ring, and suppose $X$ is an algebraic stack with quasi-compact separated diagonal and affine stabilizers.

\begin{enumerate} 

\item For every $n \geq 1$, we have the equality of subsets of the set $[X(A/\mathfrak{m}^n)]$ of isomorphism classes:
\[\bigcap_{j \geq n}\mathrm{Im}([X(A/\mathfrak{m}^{j})] \to [X(A/\mathfrak{m}^n)])=\mathrm{Im}([X(A)] \to [X(A/\mathfrak{m}^n)]).\]

\item If $X$ is quasi-compact, then for every $n \geq 1$, there is an integer $\beta(n) \geq n$ such that 
\[\mathrm{Im}([X(A/\mathfrak{m}^{\beta(n)})] \to [X(A/\mathfrak{m}^n)])=\mathrm{Im}([X(A)] \to [X(A/\mathfrak{m}^n)]). \]\end{enumerate}
\end{lemma}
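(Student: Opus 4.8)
The plan is to reduce the statement to classical strong Artin approximation for an affine scheme, by choosing a sufficiently good smooth chart of $X$ and exploiting the fact that, over the Artinian rings $A/\mathfrak{m}^n$ and over the complete ring $A$, field-valued and infinitesimal points of $X$ lift to the chart. First I dispose of the trivial inclusions: if $\xi\in[X(A/\mathfrak{m}^n)]$ is the reduction of a class in $[X(A)]$, then its further reductions witness membership in every image $\mathrm{Im}([X(A/\mathfrak{m}^{j})]\to[X(A/\mathfrak{m}^n)])$ and in the $\beta(n)$-term, giving ``$\supseteq$'' in both (1) and (2). Moreover, once $X$ is quasi-compact, (1) follows formally from (2), since the intersection in (1) is contained in the single term indexed by $j=\beta(n)$, which (2) identifies with $\mathrm{Im}([X(A)]\to[X(A/\mathfrak{m}^n)])$.

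Next I reduce (1) to the quasi-compact case. Because $A/\mathfrak{m}^n$ is an Artinian local ring, $\Spec(A/\mathfrak{m}^n)$ is a single point, so the morphism $\Spec(A/\mathfrak{m}^n)\to X$ representing $\xi$ and, for the same reason, all of its lifts $\Spec(A/\mathfrak{m}^j)\to X$ factor set-theoretically through one point $x\in|X|$ (the image of the closed point). Choosing a quasi-compact open substack $U\subseteq X$ containing $x$, the class $\xi$ and all its lifts factor through $U$; since $U\to X$ is a monomorphism the relevant images are computed inside $U$, and $U$ inherits all hypotheses of the lemma together with quasi-compactness. Thus it suffices to treat $X$ quasi-compact, in which case it remains to prove (2).

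Assuming $X$ quasi-compact, I invoke the existence of a smooth surjective morphism $p\colon Y=\Spec B\to X$ with $Y$ affine of finite type over $A$ through which every field-valued point of $X$ factors; this is where the hypotheses of affine stabilizers and quasi-compact separated diagonal enter. Formal smoothness of $p$ then shows that $Y(A/\mathfrak{m}^i)\to[X(A/\mathfrak{m}^i)]$ is surjective for every $i$: a lift of the closed point over $\Spec k$ extends across the nilpotent thickening $\Spec k\hookrightarrow\Spec(A/\mathfrak{m}^i)$. Lifting compatibly through the square-zero steps $A/\mathfrak{m}^{i+1}\to A/\mathfrak{m}^i$ and using $Y(A)=\invlim Y(A/\mathfrak{m}^i)$, which holds because $Y$ is affine and $A$ is complete, shows that $Y(A)\to[X(A)]$ is surjective as well. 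All these surjections are compatible with the reduction maps.

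Finally I apply strong Artin approximation to the affine finite-type scheme $Y$ over the excellent Henselian ring $A$: there is a function $\beta$ with $\mathrm{Im}(Y(A)\to Y(A/\mathfrak{m}^n))=\mathrm{Im}(Y(A/\mathfrak{m}^{\beta(n)})\to Y(A/\mathfrak{m}^n))$. Given $\xi\in\mathrm{Im}([X(A/\mathfrak{m}^{\beta(n)})]\to[X(A/\mathfrak{m}^n)])$, I lift a class $\xi'\in[X(A/\mathfrak{m}^{\beta(n)})]$ over $\xi$ to a point $y'\in Y(A/\mathfrak{m}^{\beta(n)})$; its reduction $\bar{y}=y'|_{A/\mathfrak{m}^n}\in Y(A/\mathfrak{m}^n)$ satisfies $[p(\bar{y})]=\xi$ and lifts to $Y(A/\mathfrak{m}^{\beta(n)})$, so the choice of $\beta$ yields $\hat{y}\in Y(A)$ with $\hat{y}|_{A/\mathfrak{m}^n}=\bar{y}$. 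Then $[p(\hat{y})]\in[X(A)]$ reduces to $\xi$, which proves ``$\subseteq$'' and hence (2). I expect the main obstacle to lie in the construction and basic properties of the chart $Y$—securing a smooth affine cover through which field-valued points lift without residue-field extension, and verifying surjectivity of $Y(A)\to[X(A)]$—rather than in the approximation step, which becomes a black-box application once the stacky geometry has been replaced by the affine scheme $Y$.
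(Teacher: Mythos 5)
Your proof is correct and follows essentially the same route as the paper's: reduce (1) to (2) by passing to a quasi-compact open substack through which the point and all its lifts factor, choose a smooth cover $p\colon \Spec B \to X$ by an affine finite-type $A$-scheme through which field-valued points lift (this is where affine stabilizers enter), deduce surjectivity of $\Spec B(A/\mathfrak{m}^i) \to [X(A/\mathfrak{m}^i)]$ from smoothness, and conclude via strong Artin approximation applied to $\Spec B$. One remark: your intermediate claim that $Y(A)\to[X(A)]$ is surjective is never actually used in your final step (which only needs surjectivity at the Artinian levels), and this is fortunate, since justifying it as stated would require knowing that $[X(A)]\to\invlim [X(A/\mathfrak{m}^i)]$ is injective---a Tannakian statement not available under the lemma's weak hypotheses on the diagonal.
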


\begin{proof} When $X=\Spec B$ is affine, (2) follows from the strong Artin approximation property for complete local Noetherian rings \cite[Thm. 2.9]{MR1759599}. For the general case of (2), since $X$ has affine stabilizers there is a smooth cover $f\colon Y \to X$ by a finite-type $A$-scheme $Y$, which we may take to be affine, such that every field-valued point of $X$ lifts to $Y=\Spec B$ (see \cite{neeraj}). Since the map $f\colon \Spec B \to X$ is also smooth, it follows that $\Spec B(A/\mathfrak{m}^i) \to [X(A/\mathfrak{m}^i)]$ is surjective for every $i$. Therefore
\[f(\mathrm{Im}(\Spec B(A/\mathfrak{m}^j) \to \Spec B(A/\mathfrak{m}^i))=\mathrm{Im}([X(A/\mathfrak{m}^j)] \to [X(A/\mathfrak{m}^i)])\]
for every $j \geq i \geq 1$. Now (2) follows by choosing a $\beta(n)$ that works for $\Spec B$. 

For (1), suppose that $x \in \bigcap_{j \geq n}\mathrm{Im}([X(A/\mathfrak{m}^{j})] \to [X(A/\mathfrak{m}^n)])$. Since $X$ is locally of finite type, we may replace $X$ by a quasi-compact open substack which contains $x$. Thus, we may apply (2) to deduce that $x \in \mathrm{Im}([X(A)] \to [X(A/\mathfrak{m}^n)])$. Since the other inclusion is obvious, the result follows. \end{proof}


 

\begin{proposition} \label{prop:mittagapproxfinite}
Let $f\colon X \to S=\Spec A$ be a proper and flat morphism of schemes where $A$ is a complete local Noetherian ring. Then $(\mathrm{Pic}^{0}(X_n))$ is Mittag-Leffler.
\end{proposition}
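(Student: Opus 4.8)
The plan is to reduce to the numerically trivial part and then apply the stacky strong-approximation statement of Lemma \ref{Elkik}. By Proposition \ref{nsalg} it suffices to prove that $(\mathrm{Pic}^{\tau}(X_n))$ is Mittag-Leffler. The key observation is that $\mathrm{Pic}^{\tau}(X_n)$ can be read off as the set of isomorphism classes of a suitable algebraic stack $\mathcal{G}$ evaluated on the Artinian quotients $A/\mathfrak{m}^{n+1}$, at which point Lemma \ref{Elkik}(2) applies directly: its conclusion says precisely that for each $n$ the image of $[\mathcal{G}(A/\mathfrak{m}^{\beta(n)})]$ in $[\mathcal{G}(A/\mathfrak{m}^{n})]$ equals the stable image $\mathrm{Im}([\mathcal{G}(A)] \to [\mathcal{G}(A/\mathfrak{m}^{n})])$, which is exactly the Mittag-Leffler property for the system $([\mathcal{G}(A/\mathfrak{m}^{n})])$.

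The stack I would take for $\mathcal{G}$ is the open substack $\underline{\mathcal{P}ic}^{\tau}_{X/S} \subseteq \underline{\mathcal{P}ic}_{X/S}$ parametrizing fiberwise numerically trivial line bundles. Working with the stack rather than the functor is essential: since we do not assume $f$ is cohomologically flat in dimension zero, the Picard functor $\underline{\mathrm{Pic}}_{X/S}$ need not be an algebraic space, whereas the Picard stack $\underline{\mathcal{P}ic}_{X/S}$ is always algebraic, locally of finite type over $S$ with affine diagonal (Remark \ref{rem:rep}). This is precisely why Lemma \ref{Elkik} was formulated for algebraic stacks with affine stabilizers. I would then verify the hypotheses of Lemma \ref{Elkik}: the diagonal of $\underline{\mathcal{P}ic}^{\tau}_{X/S}$ is affine, being inherited from the open immersion into $\underline{\mathcal{P}ic}_{X/S}$, hence quasi-compact and separated, and its stabilizer groups are $(f_T)_{*}\mathbb{G}_m$, which are affine. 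The openness of the numerically trivial locus and its quasi-compactness (hence finite type) over $S$ I would deduce from the boundedness results of SGA6 (\cite[Exp.\ XIII, Thm.\ 4.6, 4.7, 5.1]{SGA6}), applied to the pullback of the universal line bundle on $X \times_S \underline{\mathcal{P}ic}_{X/S}$ along a smooth presentation by a scheme.

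With this in hand, for each $n$ the groupoid $\underline{\mathcal{P}ic}^{\tau}_{X/S}(A/\mathfrak{m}^{n+1})$ is the groupoid of numerically trivial line bundles on $X_n$, whose isomorphism classes form exactly $\mathrm{Pic}^{\tau}(X_n)$. Thus $([\underline{\mathcal{P}ic}^{\tau}_{X/S}(A/\mathfrak{m}^{n})])$ agrees, up to an irrelevant reindexing, with $(\mathrm{Pic}^{\tau}(X_n))$. Applying Lemma \ref{Elkik}(2) to $\underline{\mathcal{P}ic}^{\tau}_{X/S} \to \Spec A$ then shows $(\mathrm{Pic}^{\tau}(X_n))$ is Mittag-Leffler, and Proposition \ref{nsalg} upgrades this to the desired statement for $(\mathrm{Pic}^{0}(X_n))$.

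I expect the main obstacle to be the two structural claims about $\underline{\mathcal{P}ic}^{\tau}_{X/S}$ in the general proper-and-flat setting: that the numerically trivial locus is open in, and quasi-compact over $S$ at the level of, the \emph{stack}, when the classical SGA6 statements are phrased for the Picard functor under hypotheses (such as cohomological flatness) that we are deliberately avoiding. The resolution is to transport these statements along a smooth presentation of $\underline{\mathcal{P}ic}_{X/S}$, over which the pulled-back universal bundle lives on an honest proper flat family of schemes. The remaining identification of isomorphism classes with $\mathrm{Pic}^{\tau}(X_n)$ is then routine, since for the Artinian local ring $A/\mathfrak{m}^{n+1}$ numerical triviality over the base coincides with numerical triviality on the closed geometric fiber.
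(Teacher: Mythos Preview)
Your proposal is correct and follows essentially the same route as the paper: reduce via Proposition \ref{nsalg} to $(\mathrm{Pic}^{\tau}(X_n))$, recognize this system as the isomorphism classes of $\underline{\mathcal{P}ic}^{\tau}_{X/S}$ on the Artinian quotients, invoke Remark \ref{rem:rep} and \cite[Exp.\ XIII, Thm.\ 4.7 (iii)]{SGA6} to see that this stack is of finite type with affine diagonal, and then apply Lemma \ref{Elkik}(2). Your extra discussion about transporting the SGA6 statements to the stack via a smooth presentation is a reasonable elaboration of a point the paper leaves implicit.
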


\begin{proof}
The stack $\underline{\mathcal{P}ic}_{X/S}$ is an algebraic stack,
locally of finite type with affine diagonal (see Remark \ref{rem:rep}).
Moreover, the open substack $\underline{\mathcal{P}ic}^{\tau}_{X/S}$ of $\underline{\mathcal{P}ic}_{X/S}$
is of finite type over $S$ by \cite[Exp.\ XIII, Thm.\ 4.7 (iii)]{SGA6}. We conclude by using Proposition \ref{nsalg} and Lemma \ref{Elkik}(2). Indeed, apply the latter to $X= \underline{\mathcal{P}ic}^{\tau}_{X/S}$ to see that the images of $\mathrm{Pic}^{\tau}(X_m) \to \mathrm{Pic}^{\tau}(X_n)$ stabilize for $m \geq \beta(n)$ where $\beta(n)$ is as in Lemma \ref{Elkik}(2). \end{proof}



\vspace{-.5cm}
\section{A sufficient condition for non-injectivity}
\label{sec:noninjectivity}
The aim of this section is to give a sufficient condition for non-injectivity of $\phi\colon H^2(X,\mathbb{G}_m) \to \invlim H^2(X_n,\mathbb{G}_m)$ in terms of ${\ell}$-torsion in $\invlim^1 \mathrm{Pic}(X_n)$. 

\begin{proposition} \label{prop:muncoho} Let $X \to S=\Spec A$ be a proper morphism, where $A$ is an $I$-adically complete Noetherian ring, and let $\ell$ be a positive integer not divisible by the characteristic of any of the residue fields of $A/I$. Then the natural maps 
\[H^2(X,\mu_{\ell}) \to H^2(\mathfrak{X},\mu_{\ell}) \to \invlim H^2(X_n,\mu_{{\ell}})\] 
are isomorphisms, where $S_n=\Spec A/I^{n+1}$ and $X_n=X \times_S S_n$.
\end{proposition}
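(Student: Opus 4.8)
The plan is to identify all three groups in the statement with the ordinary étale cohomology $H^2(X_0,\mu_\ell)$ of the closed fiber $X_0=X\times_S\Spec(A/I)$, and to recognize the two displayed arrows as, respectively, the map governed by a $\invlim^1$-term and the restriction-to-closed-fiber map. First I would record a preliminary unit computation: since $A$ is $I$-adically complete Noetherian, $I$ lies in the Jacobson radical, so every maximal ideal of $A$ contains $I$ and hence has residue field among the residue fields of $A/I$; the hypothesis on $\ell$ therefore forces $\ell$ to be a unit in $A$, so that $\mu_\ell$ is finite \'etale over all of $X$. By the topological invariance of the \'etale site under the nilpotent thickenings $X_0\hookrightarrow X_n$ already invoked in Section \ref{sec:preliminaries}, the sheaves $\mu_{\ell,X_n}$ are all identified with $\mu_{\ell,X_0}$, so the transition maps of the inverse system $(H^i(X_n,\mu_\ell))$ are isomorphisms for every $i$.

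This immediately disposes of the second map. Indeed $(H^1(X_n,\mu_\ell))$ is essentially constant, hence Mittag--Leffler, so by Example \ref{finitevects} its $\invlim^1$ vanishes; the exact sequence \eqref{eqn.prelimexact} with $j=2$ then shows that $H^2(\mathfrak{X},\mu_\ell)\to\invlim H^2(X_n,\mu_\ell)$ is an isomorphism, and moreover $\invlim H^2(X_n,\mu_\ell)=H^2(X_0,\mu_\ell)$. Under this identification the composite $H^2(X,\mu_\ell)\to\invlim H^2(X_n,\mu_\ell)=H^2(X_0,\mu_\ell)$ is simply restriction along $X_0\hookrightarrow X$, so the whole proposition reduces to showing that this restriction is an isomorphism.

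The heart of the argument is thus the closed-fiber comparison $H^i(X,\mu_\ell)\xrightarrow{\sim}H^i(X_0,\mu_\ell)$. Since $A$ is $I$-adically complete, $(A,I)$ is a henselian pair, and $f\colon X\to\Spec A$ is proper, this is exactly the proper base change theorem over a henselian pair, applied to the torsion sheaf $\mu_\ell$. Concretely, writing $R\Gamma(X,\mu_\ell)=R\Gamma(\Spec A,Rf_*\mu_\ell)$, the ordinary proper base change theorem identifies the restriction $(Rf_*\mu_\ell)|_{\Spec A/I}$ with $R(f_0)_*\mu_\ell$, where $f_0\colon X_0\to\Spec A/I$ is the base change; and Gabber's rigidity theorem for henselian pairs gives $H^i(\Spec A,\mathcal{G})\cong H^i(\Spec A/I,\mathcal{G}|_{\Spec A/I})$ for $\mathcal{G}=Rf_*\mu_\ell$, whose cohomology sheaves are torsion. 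Combining the two yields the desired isomorphism for all $i$, in particular $i=2$.

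Finally, since both the composite and the second map are isomorphisms, the first map $H^2(X,\mu_\ell)\to H^2(\mathfrak{X},\mu_\ell)$ is an isomorphism as well; its injectivity, incidentally, can also be read off from Proposition \ref{formalinjectivity}, as $\mu_\ell$ is a smooth affine group scheme over $X$. I expect the proper base change step to be the only real obstacle: the textbook form of proper base change gives the closed-fiber comparison for a henselian \emph{local} base, whereas here $A/I$ need not be local, so one genuinely needs the henselian-\emph{pair} version to pass from $\Spec A$ to the possibly non-local $\Spec A/I$.
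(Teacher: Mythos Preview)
Your proposal is correct and follows essentially the same route as the paper: both arguments use topological invariance of the \'etale site to see that the systems $(H^i(X_n,\mu_\ell))$ are constant, deduce from \eqref{eqn.prelimexact} that the second map is an isomorphism, and then invoke Gabber's proper base change over the henselian pair $(A,I)$ to identify $H^2(X,\mu_\ell)$ with $H^2(X_0,\mu_\ell)$. The paper cites Gabber's result directly, whereas you factor it as ordinary proper base change composed with affine rigidity, but this is the same content.
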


\begin{proof} We have the following exact sequence:
\[0 \to {\invlim}^1H^{1}(X_n,\mu_{\ell}) \to H^2(\mathfrak{X},\mu_{\ell}) \to \invlim H^2(X_n, \mu_{\ell}) \to 0.\]
However, since $\mu_{\ell}$ is an \'etale sheaf on the \'etale site common to each $X_n$,
\[H^i(X_n,\mu_{\ell}) \to H^i(X_0,\mu_{\ell})\]
is an isomorphism for each $n \geq 1$. For $i=1$ this implies ${\invlim}^1H^{1}(X_n,\mu_{\ell})=0$, so that $H^2(\mathfrak{X},\mu_{\ell}) \to \invlim H^2(X_n,\mu_{\ell})$ is an isomorphism. For $i=2$ it implies the natural homomorphism $\invlim H^2(X_n,\mu_{\ell}) \to H^2(X_0,\mu_{\ell})$ is an isomorphism. To conclude, we apply Gabber's proper base change theorem \cite[Cor. 1]{gabber} to $X/S$ to see that the composite $H^2(X,\mu_{\ell}) \to \invlim H^2(X_n,\mu_{\ell}) \cong H^2(X_0,\mu_{\ell})$ is an isomorphism, as desired. \end{proof}

\begin{corollary} \label{cor:torsionextends} Let $X \to S=\Spec A$ be a proper morphism where $A$ is an $I$-adically complete Noetherian ring and suppose $\ell$ is a positive integer not divisible by any of the characteristics of the residue fields of $A/I$.  Then the natural map is an isomorphism:
\[H^2(X,\mathbb{G}_m)[{\ell}] \xrightarrow{\sim} H^2(\mathfrak{X},\mathbb{G}_m)[{\ell}].\]
 \end{corollary}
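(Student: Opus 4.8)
The plan is to deduce Corollary \ref{cor:torsionextends} from Proposition \ref{prop:muncoho} together with the Kummer sequence. The statement asserts that $\ell$-torsion in $H^2(X,\mathbb{G}_m)$ agrees with $\ell$-torsion in $H^2(\mathfrak{X},\mathbb{G}_m)$, where $\ell$ is invertible on all the residue fields of $A/I$. Since $\mu_\ell$ is the kernel of the $\ell$th power map on $\mathbb{G}_m$ and $\ell$ is invertible on the relevant \'etale sites, the Kummer sequence
\[
1 \to \mu_\ell \to \mathbb{G}_m \xrightarrow{(-)^\ell} \mathbb{G}_m \to 1
\]
is exact as a sequence of \'etale sheaves on $X$ (and, via the identification of the \'etale sites of the $X_n$, on each $X_n$). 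The idea is to compare the long exact cohomology sequences attached to this Kummer sequence on $X$ and on the formal scheme $\mathfrak{X}$, and then invoke Proposition \ref{prop:muncoho} to control the $\mu_\ell$-terms.

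First I would write down, for the scheme $X$, the relevant segment of the Kummer long exact sequence, which produces an isomorphism
\[
\mathrm{coker}\bigl[H^1(X,\mathbb{G}_m) \xrightarrow{\ell} H^1(X,\mathbb{G}_m)\bigr] \xrightarrow{\sim} H^2(X,\mu_\ell) \to H^2(X,\mathbb{G}_m)[\ell] \to 0,
\]
so that $H^2(X,\mu_\ell) \twoheadrightarrow H^2(X,\mathbb{G}_m)[\ell]$ with kernel the image of $\mathrm{Pic}(X)/\ell$. Then I would produce the analogous sequence on $\mathfrak{X}$. For this I need the Kummer sequence in continuous cohomology $H^i(\mathfrak{X},-) = H^i_{\mathrm{cont}}(X_0,(-)_{X_n})$: the $\ell$th power map is an endomorphism of the inverse system $(\mathbb{G}_{m,X_n})$ with kernel the system $(\mu_{\ell,X_n})$, and since $\ell$ is invertible this is again short exact in $(\mathrm{Sh})^{\mathbb{N}}$, so the derived functors $H^i_{\mathrm{cont}}$ yield a long exact sequence by the standard $\delta$-functor formalism. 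This gives the formal analogue: $H^2(\mathfrak{X},\mu_\ell)$ surjects onto $H^2(\mathfrak{X},\mathbb{G}_m)[\ell]$ with kernel the image of $H^1(\mathfrak{X},\mathbb{G}_m)/\ell = \mathrm{Pic}(\mathfrak{X})/\ell$.

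Next I would fit these two sequences into a commutative ladder induced by the natural restriction $X \rightsquigarrow \mathfrak{X}$, and analyze the vertical maps. By Proposition \ref{prop:muncoho} the middle vertical map $H^2(X,\mu_\ell) \to H^2(\mathfrak{X},\mu_\ell)$ is an isomorphism. By Proposition \ref{elementaryPic} the restriction $\mathrm{Pic}(X) \to \mathrm{Pic}(\mathfrak{X})$ is an isomorphism, so the induced map $\mathrm{Pic}(X)/\ell \to \mathrm{Pic}(\mathfrak{X})/\ell$ on the kernel terms is an isomorphism as well. A diagram chase (the five lemma applied to the two short exact sequences $0 \to \mathrm{Pic}/\ell \to H^2(-,\mu_\ell) \to H^2(-,\mathbb{G}_m)[\ell] \to 0$, with isomorphisms on the two outer-left terms) then forces the right-hand vertical map $H^2(X,\mathbb{G}_m)[\ell] \to H^2(\mathfrak{X},\mathbb{G}_m)[\ell]$ to be an isomorphism, which is exactly the claim.

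The main obstacle I anticipate is establishing the Kummer long exact sequence for the formal cohomology $H^i(\mathfrak{X},-)$ and checking that it is genuinely compatible, via the restriction maps, with the Kummer sequence on $X$ — i.e.\ that the whole comparison ladder commutes, including the connecting (boundary) homomorphisms. Since $H^i(\mathfrak{X},-)$ is defined as the derived functor $H^i_{\mathrm{cont}}(X_0,-)$ of a functor on inverse systems, the long exact sequence is formal provided $\ell$-power is exact on the system $(\mathbb{G}_{m,X_n})$, which holds because $\ell$ is invertible; the naturality of the connecting maps under the map of $\delta$-functors induced by $X \to \mathfrak{X}$ is likewise formal but should be stated carefully. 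Everything else — the exactness of Kummer, the identification of the kernel term with $\mathrm{Pic}/\ell$, and the final five-lemma step — is routine once this framework is in place.
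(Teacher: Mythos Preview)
Your proposal is correct and follows essentially the same approach as the paper: both compare the Kummer short exact sequences $0 \to \mathrm{Pic}(-)/\ell \to H^2(-,\mu_\ell) \to H^2(-,\mathbb{G}_m)[\ell] \to 0$ on $X$ and on $\mathfrak{X}$, invoke Proposition~\ref{prop:muncoho} for the middle term and Proposition~\ref{elementaryPic} for the left term, and conclude by the five lemma. Your added remarks about verifying the Kummer long exact sequence in continuous cohomology and the compatibility of the ladder are valid points of care that the paper leaves implicit.
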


\begin{proof} 

This follows by comparing the long exact sequences associated to the Kummer sequence on $X$ and $\mathfrak{X}$
\[
\begin{tikzcd}
  0 \arrow[r] & \mathrm{Pic}(X)/\ell\mathrm{Pic}(X) \arrow[d] \arrow[r] & H^2(X,\mu_{\ell}) \arrow[d] \arrow[r] & H^2(X,\mathbb{G}_m)[{\ell}] \arrow[d] \arrow[r] & 0  \\
  0 \arrow[r] & \mathrm{Pic}(\mathfrak{X})/\ell\mathrm{Pic}(\mathfrak{X}) \arrow[r] & H^2(\mathfrak{X},\mu_{{\ell}}) \arrow[r] & H^2(\mathfrak{X},\mathbb{G}_m)[{\ell}] \arrow[r] & 0
\end{tikzcd}
\]
We apply Propositions \ref{prop:muncoho} and \ref{elementaryPic} to see that the rightmost vertical arrow is an isomorphism. \end{proof}

\begin{corollary} \label{cor:torsioninvlim}Let $X \to S=\Spec A$ be a proper morphism where $A$ is an $I$-adically complete Noetherian ring and suppose $\ell$ is a positive integer not divisible by any of the characteristics of the residue fields of $A/I$. Then, if $\phi$ is as in (\ref{eqn.phi}), there is a natural identification 
\[({\invlim}^1\mathrm{Pic}(X_n))[\ell]=\mathrm{Ker}(\phi)[\ell].\] 
\end{corollary}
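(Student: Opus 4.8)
The plan is to extract the identity from the commutative diagram comparing the two exact sequences already in play: the continuous-cohomology sequence \eqref{eqn.prelimexact} for $(G_n) = (\mathbb{G}_{m,X_n})$ (which reads $0 \to {\invlim}^1\mathrm{Pic}(X_n) \to H^2(\mathfrak{X},\mathbb{G}_m) \to \invlim H^2(X_n,\mathbb{G}_m) \to 0$ and is precisely \eqref{eqn.introexact}) together with the injection $H^2(X,\mathbb{G}_m) \hookrightarrow H^2(\mathfrak{X},\mathbb{G}_m)$ from Proposition \ref{formalinjectivity}. First I would set $W = {\invlim}^1\mathrm{Pic}(X_n)$ and rewrite \eqref{eqn.introexact} as an identification $W = \mathrm{Ker}\big(H^2(\mathfrak{X},\mathbb{G}_m) \to \invlim H^2(X_n,\mathbb{G}_m)\big)$. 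Since the composite $H^2(X,\mathbb{G}_m) \to \invlim H^2(X_n,\mathbb{G}_m)$ factors through $H^2(\mathfrak{X},\mathbb{G}_m)$, by definition of $\phi$ the kernel $\mathrm{Ker}(\phi)$ is exactly the preimage in $H^2(X,\mathbb{G}_m)$ of $W$ under the injection of Proposition \ref{formalinjectivity}; that is, $\mathrm{Ker}(\phi) = H^2(X,\mathbb{G}_m) \cap W$ inside $H^2(\mathfrak{X},\mathbb{G}_m)$.

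Next I would pass to $\ell$-torsion. Because $H^2(X,\mathbb{G}_m) \to H^2(\mathfrak{X},\mathbb{G}_m)$ is injective, intersecting with the subgroup $W$ and then taking $\ell$-torsion commute, so $\mathrm{Ker}(\phi)[\ell] = H^2(X,\mathbb{G}_m)[\ell] \cap W[\ell]$, still viewed inside $H^2(\mathfrak{X},\mathbb{G}_m)[\ell]$. The key input is then Corollary \ref{cor:torsionextends}, which asserts $H^2(X,\mathbb{G}_m)[\ell] \xrightarrow{\sim} H^2(\mathfrak{X},\mathbb{G}_m)[\ell]$. This says the inclusion of $\ell$-torsion subgroups is not merely injective but surjective, so every $\ell$-torsion class of $H^2(\mathfrak{X},\mathbb{G}_m)$, in particular every element of $W[\ell]$, already lies in $H^2(X,\mathbb{G}_m)[\ell]$. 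Hence the intersection is all of $W[\ell]$, giving $\mathrm{Ker}(\phi)[\ell] = W[\ell] = ({\invlim}^1\mathrm{Pic}(X_n))[\ell]$, which is the claim.

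The only genuine subtlety, and the step I expect to require the most care, is verifying that the diagram identifying $\phi$ with the map induced by \eqref{eqn.introexact} and Proposition \ref{formalinjectivity} actually commutes on the nose, so that the set-theoretic kernel computation is legitimate; this amounts to checking that the factorization $H^2(X,\mathbb{G}_m) \to H^2(\mathfrak{X},\mathbb{G}_m) \to \invlim H^2(X_n,\mathbb{G}_m)$ of the restriction map is the one appearing in \eqref{eqn.introexact}, which follows from the naturality of continuous cohomology but should be stated explicitly. Everything else is a diagram chase, with Corollary \ref{cor:torsionextends} doing the real work of upgrading the inclusion $\mathrm{Ker}(\phi)[\ell] \subseteq W[\ell]$ to an equality.
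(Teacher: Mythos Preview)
Your proposal is correct and is exactly the argument the paper has in mind: the one-line proof in the paper cites precisely Proposition \ref{formalinjectivity}, Corollary \ref{cor:torsionextends}, and the exact sequence \eqref{eqn.introexact}, and you have unpacked the diagram chase those three inputs encode. The naturality check you flag is indeed routine, and your identification $\mathrm{Ker}(\phi)[\ell]=H^2(X,\mathbb{G}_m)[\ell]\cap W[\ell]=W[\ell]$ via Corollary \ref{cor:torsionextends} is the intended mechanism.
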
 

\begin{proof} This follows by Proposition \ref{formalinjectivity}, Lemma \ref{cor:torsionextends}, and the exact sequence (\ref{eqn.introexact}).
\end{proof}




\section{Characteristic $p$ phenomena} \label{sec:p}

\begin{lemma}\label{l-divisible}
Let $f\colon X \to S=\Spec A$ be a proper morphism with $A$ a complete local Noetherian ring with residue field of characteristic $p>0$, and let $\ell$ be a positive integer not divisible by $p$.
\begin{enumerate}

\item The systems $(\mathrm{Pic}(X_n)[\ell])$ and $(\mathrm{Pic}(X_n)/\ell\mathrm{Pic}(X_n))$ are constant. 
\item There is a natural identification 
\[({\invlim}^1\mathrm{Pic}(X_n))[\ell]=\mathrm{Coker}[\mathrm{Pic}(X)/\ell\mathrm{Pic}(X) \to \mathrm{Pic}(X_0)/\ell\mathrm{Pic}(X_0)].\]
\item The abelian group ${\invlim}^1\mathrm{Pic}(X_n)$ is $\ell$-divisible.

\end{enumerate}

\noindent Moreover, the statements above hold with $\mathrm{Pic}^0(-)$ in place of $\mathrm{Pic}(-)$.
 \end{lemma}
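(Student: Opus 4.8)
The plan is to control everything through the square-zero thickenings $X_n\subset X_{n+1}$. Identifying the \'etale sites of all $X_n$ with that of $X_0$, the kernel $\mathcal{J}_n$ of $\mathcal{O}_{X_{n+1}}\to\mathcal{O}_{X_n}$ satisfies $\mathcal{J}_n^2=0$ (as $\mathcal{J}_n\subseteq\mathfrak{m}^{n+1}\mathcal{O}_{X_{n+1}}$ and $2(n+1)\ge n+2$), so $1+\mathcal{J}_n\cong\mathcal{J}_n$ additively and there is an exact sequence of abelian sheaves $0\to\mathcal{J}_n\to\mathbb{G}_{m,X_{n+1}}\to\mathbb{G}_{m,X_n}\to0$ with $\mathcal{J}_n$ a coherent $\mathcal{O}_{X_0}$-module. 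Since $\mathrm{char}(k)=p\nmid\ell$, multiplication by $\ell$ is invertible on $\mathcal{J}_n$, so every $H^i(X_0,\mathcal{J}_n)$ is a $\mathbb{Z}[1/\ell]$-module. Reading off the long exact cohomology sequence, the transition map $\mathrm{Pic}(X_{n+1})\to\mathrm{Pic}(X_n)$ then has kernel (a quotient of $H^1(X_0,\mathcal{J}_n)$) and cokernel (a subgroup of $H^2(X_0,\mathcal{J}_n)$) that are \emph{uniquely $\ell$-divisible}. This is the one geometric input.

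For (1) I would factor each transition map through its image and apply the snake lemma for multiplication by $\ell$ to the two resulting short exact sequences. Because the kernel and cokernel are uniquely $\ell$-divisible (hence have trivial $\ell$-torsion and trivial reduction mod $\ell$), the snake sequences collapse and show that $\mathrm{Pic}(X_{n+1})[\ell]\to\mathrm{Pic}(X_n)[\ell]$ and $\mathrm{Pic}(X_{n+1})/\ell\to\mathrm{Pic}(X_n)/\ell$ are isomorphisms; thus both systems are constant, in particular Mittag-Leffler with vanishing ${\invlim}^1$ (Example~\ref{finitevects}).

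Parts (2) and (3) I would extract from a purely homological lemma: if $(G_n)$ is a tower with $(G_n[\ell])$ and $(G_n/\ell)$ constant, consider the short exact sequences of towers $0\to(G_n[\ell])\to(G_n)\xrightarrow{\ell}(\ell G_n)\to0$ and $0\to(\ell G_n)\to(G_n)\to(G_n/\ell)\to0$. Their six-term $\invlim/{\invlim}^1$ sequences (using ${\invlim}^1(G_n[\ell])={\invlim}^1(G_n/\ell)=0$) show that the map ${\invlim}^1 G_n\to{\invlim}^1(\ell G_n)$ induced by $\cdot\ell$ is an isomorphism while the one induced by inclusion is surjective; their composite is multiplication by $\ell$, which gives (3), and a short diagram chase identifies $({\invlim}^1 G_n)[\ell]$ with $\mathrm{coker}[\invlim G_n\to\invlim(G_n/\ell)]=\mathrm{coker}[\invlim G_n\to G_0/\ell]$. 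For $G_n=\mathrm{Pic}(X_n)$ one has $\invlim\mathrm{Pic}(X_n)=\mathrm{Pic}(X)$ (Proposition~\ref{elementaryPic}) and the map is restriction, giving (2); this is consistent with Corollary~\ref{cor:torsioninvlim}, which identifies the same group with $\mathrm{Ker}(\phi)[\ell]$.

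For the ``moreover'' I would rerun the argument with $\mathrm{Pic}^0$. The transition kernels and cokernels stay uniquely $\ell$-divisible: comparing the towers via $0\to(\mathrm{Pic}^0(X_n))\to(\mathrm{Pic}(X_n))\to(N_n)\to0$ with $N_n=\mathrm{Pic}(X_n)/\mathrm{Pic}^0(X_n)$, the transition maps of $(N_n)$ are injective (a bundle whose restriction to $X_n$ is algebraically trivial is already so on $X_0$) with $N_n\hookrightarrow\mathrm{NS}(X_0)$ finitely generated (Remark~\ref{rem:rankmakessense}), so $(N_n)$ is eventually constant; a snake-lemma comparison then shows the $\mathrm{Pic}^0$-transition-kernels coincide with the $\mathrm{Pic}$-transition-kernels and the $\mathrm{Pic}^0$-cokernels inject into the $\mathrm{Pic}$-cokernels. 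This yields (1) and (3) for $\mathrm{Pic}^0$ together with the formula $({\invlim}^1\mathrm{Pic}^0(X_n))[\ell]\cong\mathrm{coker}[\invlim\mathrm{Pic}^0(X_n)\to\mathrm{Pic}^0(X_0)/\ell]$. The \textbf{main obstacle} is the last identification: Grothendieck existence gives $\invlim\mathrm{Pic}^0(X_n)=r^{-1}(\mathrm{Pic}^0(X_0))$, the group appearing in Definition~\ref{defNS}, which contains $\mathrm{Pic}^0(X)$ but may strictly exceed it precisely when the component group jumps. Matching the cokernel with the claimed $\mathrm{coker}[\mathrm{Pic}^0(X)/\ell\to\mathrm{Pic}^0(X_0)/\ell]$ amounts to showing that the extra classes in $r^{-1}(\mathrm{Pic}^0(X_0))\subseteq\mathrm{Pic}^\tau(X)$ restrict into $\ell\,\mathrm{Pic}^0(X_0)$; this has to be handled through the finite component group $\mathrm{Pic}^\tau(X)/\mathrm{Pic}^0(X)$ (Remarks~\ref{thmofbase} and~\ref{rem:tauneron}), and is the delicate point of the proof.
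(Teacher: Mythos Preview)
Your argument is essentially the paper's: observe that the kernel and cokernel of each $\mathrm{Pic}(X_{n+1})\to\mathrm{Pic}(X_n)$ are $\mathbb{F}_p$-vector spaces (hence uniquely $\ell$-divisible), deduce (1), then obtain (2) and (3) by splitting $\cdot\ell$ into your two short exact sequences of towers and passing to limits. For the ``moreover'' the paper simply writes ``The proof of these three statements for $\mathrm{Pic}^0(-)$ is identical.''

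Two remarks. First, your assertion that $(N_n)$ is \emph{eventually constant} is unjustified and in fact false in general: a strictly decreasing chain of subgroups of a finitely generated abelian group is possible (e.g.\ $p^n\mathbb{Z}\subset\mathbb{Z}$), and Theorem~\ref{mainp} is precisely about situations where $(N_n)=(\mathrm{NS}(X_n/S_n))$ fails to stabilize. Fortunately you do not use it: the snake-lemma conclusion you actually draw (kernels coincide, cokernels inject) needs only that the transition maps $N_{n+1}\to N_n$ are injective, which you have. That already makes the $\mathrm{Pic}^0$-transition kernels and cokernels $\mathbb{F}_p$-vector spaces, so (1) and (3) for $\mathrm{Pic}^0$ go through.

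Second, your flagged obstacle for (2) with $\mathrm{Pic}^0$ is legitimate: the ``identical'' argument yields the formula with $\invlim\mathrm{Pic}^0(X_n)=r^{-1}(\mathrm{Pic}^0(X_0))$ rather than $\mathrm{Pic}^0(X)$, and these can differ. The paper does not treat this point; the only part of the ``moreover'' invoked later (in the proof of Theorem~\ref{mainp}) is (3), the $\ell$-divisibility of ${\invlim}^1\mathrm{Pic}^0(X_n)$. So you are not missing an argument the paper supplies --- you have put your finger on a spot where the paper's ``identical'' is too quick for the literal reading of (2).
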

\begin{proof} 

Note that the kernel and cokernel of the restriction map $r\colon \mathrm{Pic}(X_{n+1}) \to \mathrm{Pic}(X_n)$ are both $\mathbb{F}_p$-vector spaces. This implies any $L \in \mathrm{Pic}(X_n)[\ell]$ lifts to $X_{n+1}$ and that there is a unique such lift which is $\ell$-torsion. A similar argument shows that $(\mathrm{Pic}(X_n)/\ell\mathrm{Pic}(X_n))$ is a constant system and from this (1) follows.

 
 

For (2) and (3), we break up $\cdot \ell\colon \mathrm{Pic}(X_n) \to \mathrm{Pic}(X_n)$ into the two exact sequences:
\[0 \to \mathrm{Pic}(X_n)[\ell] \to \mathrm{Pic}(X_n) \to \ell\mathrm{Pic}(X_n) \to 0, \]
\[0 \to \ell\mathrm{Pic}(X_n) \to \mathrm{Pic}(X_n) \to \mathrm{Pic}(X_n)/\ell\mathrm{Pic}(X_n) \to 0,\]
take limits, and apply (1). The proof of these three statements for $\mathrm{Pic}^0(-)$ is identical. \end{proof}


\begin{remark} \label{rem:BPformula} Combining Lemma \ref{l-divisible} (2) and Lemma \ref{cor:torsioninvlim} yields a refinement of \cite[Thm. 1.3 (3)]{BindaPorta}. \end{remark}

\begin{Theorem}\label{mainp}
Let $f\colon X \to S=\Spec A$ be a proper morphism with $(A,\mathfrak{m})$ a complete local Noetherian ring with residue field of characteristic $p>0$, let $\ell>1$ be an integer not divisible by $p$, and consider the following statements:
\begin{itemize}
\item[(a)] The system of abelian groups $(\mathrm{Pic}(X_n))$ is Mittag-Leffler. 
\item[(b)] The homomorphism $\phi\colon H^2(X, \mathbb{G}_m) \to \invlim H^2(X_n, \mathbb{G}_m)$ is injective.
\item[(c)] The restriction of $\phi$ to $H^2(X, \mathbb{G}_m)[\ell]$ is injective.
\item[(d)] The equality $\mathrm{rk}(\mathrm{NS}(X/S))=\mathrm{rk}(\mathrm{NS}(X_0))$ holds.\end{itemize}
Then (a) $\Rightarrow$ (b) $\Rightarrow$ (c) $\Rightarrow$ (d). Moreover, if at least one of the following holds:
\begin{enumerate}
\item the residue field of $A$ is algebraically closed, or
\item the residue field of $A$ is finite, or
\item $f$ is flat.
\end{enumerate}
then (d) $\Rightarrow$ (a).
\end{Theorem}

\begin{proof} That (a) $\Rightarrow$ (b) follows from Proposition \ref{formalinjectivity}, and the statement (b) $\Rightarrow$ (c) is obvious. To show (c) $\Rightarrow$ (d) we assume $\mathrm{rk}(\mathrm{NS}(X/S))<\mathrm{rk}(\mathrm{NS}(X_0))$. Now we consider the commutative diagram for each $n \geq 0$:
\[
\begin{tikzcd}[column sep=18pt]
  0 \arrow[r] & r^{-1}(\mathrm{Pic}^0(X_0)) \arrow[d] \arrow[r] & \mathrm{Pic}(X) \arrow[d] \arrow[r] & \mathrm{NS}(X/S) \arrow[d] \arrow[r] & 0  \\
  0 \arrow[r] & \mathrm{Pic}^0(X_n) \arrow[r] & \mathrm{Pic}(X_n) \arrow[r] & \mathrm{NS}(X_n/S_n) \arrow[r] & 0 
\end{tikzcd}
\]
and by taking limits and using Proposition \ref{elementaryPic} we obtain an identification
\[\mathrm{Coker}[g\colon \mathrm{NS}(X/S) \to \invlim \mathrm{NS}(X_n/S_n)]=\mathrm{Ker}[{\invlim}^1\mathrm{Pic}^0(X_n) \to {\invlim}^1\mathrm{Pic}(X_n)].\]
Since $g$ is an injective map between finitely generated abelian groups, either it is rank-preserving or $\mathrm{Ker}[{\invlim}^1\mathrm{Pic}^0(X_n) \to {\invlim}^1\mathrm{Pic}(X_n)]$ is a finitely generated abelian group of positive rank. If the latter holds, then by the $\ell$-divisibility of ${\invlim}^1\mathrm{Pic}^0(X_n)$ (see Lemma \ref{l-divisible}), ${\invlim}^1\mathrm{Pic}(X_n)$ has nontrivial $\ell$-torsion so, in this case, we may apply Corollary \ref{cor:torsioninvlim}. Thus, we may assume that $\mathrm{NS}(X/S) \to \invlim \mathrm{NS}(X_n/S_n)$ is rank-preserving.

Now we consider the exact sequence
\begin{equation}
\label{eqn:NS}
0 \to \mathrm{NS}(X_n/S_n) \to \mathrm{NS}(X_0) \to \mathrm{NS}(X_0)/\mathrm{NS}(X_n/S_n) \to 0
\end{equation}
and note that the cokernel is a $\mathbb{Z}/p^n\mathbb{Z}$-module and therefore $\invlim \mathrm{NS}(X_0)/\mathrm{NS}(X_n/S_n)$ is a $\mathbb{Z}_p$-module. Indeed, the groups $\mathrm{NS}(X_0)/\mathrm{NS}(X_n/S_n)$ are quotients of $\mathrm{Pic}(X_0)/\mathrm{Im}[\mathrm{Pic}(X_n) \to \mathrm{Pic}(X_0)]$ and these are $\mathbb{Z}/p^n\mathbb{Z}$-modules because the obstruction to deforming a line bundle along a square-zero extension lies in an $\mathbb{F}_p$-vector space. By taking limits of the sequence \eqref{eqn:NS} above and using the fact that $\mathrm{NS}(X/S) \to \invlim \mathrm{NS}(X_n/S_n)$ is rank-preserving, our hypothesis that $\mathrm{rk}(\mathrm{NS}(X/S))<\mathrm{rk}(\mathrm{NS}(X_0))$ implies ${\invlim}^1\mathrm{NS}(X_n/S_n)$ contains a copy of $\mathbb{Z}_p/F$ where $F$ is a finitely generated abelian group of positive rank. Thus ${\invlim}^1\mathrm{NS}(X_n/S_n)$ has nontrivial $\ell$-torsion, and since ${\invlim}^1\mathrm{Pic}^0(X_n)$ is $\ell$-divisible, it follows that ${\invlim}^1\mathrm{Pic}(X_n)$ has nontrivial $\ell$-torsion, and we again conclude by Corollary \ref{cor:torsioninvlim}.

It remains to show (d) $\Rightarrow$ (a) with one of the extra hypothesis. We suppose $\mathrm{rk}(\mathrm{NS}(X/S))=\mathrm{rk}(\mathrm{NS}(X_0))$. Then for each $n \geq 0$, we have the finite-index containments
\[\mathrm{NS}(X/S) \subset \mathrm{NS}(X_n/S_n) \subset \mathrm{NS}(X_0)\]
and therefore $(\mathrm{NS}(X_n/S_n))$ is a Mittag-Leffler system. To conclude, we use Example \ref{finitevects} (4) and the fact that $(\mathrm{Pic}^0(X_n))$ is a Mittag-Leffler system. The latter follows from Proposition \ref{prop:mittagapproxfinite} in the flat case. To treat the non-flat cases, we develop further techniques in Section \ref{sec:liftingclosed} to show $(\mathrm{Pic}^0(X_n))$ is a Mittag-Leffler system (see Theorem \ref{algclosedML}). \end{proof}

\begin{proof}[Proof of Theorem \ref{main1}] The restriction map $r_{\eta}\colon \mathrm{Pic}(X) \to \mathrm{Pic}(X_{\eta})$ is an isomorphism, so there is a specialization map $\mathrm{sp}\colon \mathrm{Pic}(X_{\eta}) \to \mathrm{Pic}(X_0)$. Also 
\[\mathrm{Pic}^0(X_{\eta}) \subset \mathrm{sp}^{-1}(\mathrm{Pic}^0(X_0)) \subset \mathrm{Pic}^{\tau}(X_{\eta})\] 
where the rightmost containment follows by Remark \ref{rem:tau} and the leftmost containment follows from, for example, the proof of \cite[Prop.\ 3.3]{MaulikPoonen}. Indeed, it is shown there that the specialization map $\mathrm{sp}$ preserves algebraic equivalence. This implies $\mathrm{rk}(\mathrm{NS}(X/S))=\mathrm{rk}(\mathrm{NS}(X_{\eta}))$ and the result follows by Theorem \ref{mainp}.
\end{proof}

\begin{remark} \label{rem:question} 

\noindent Theorem \ref{mainp} answers a question implicit in the work of Binda-Porta: the family in \cite[Ex. 4.18]{BindaPorta} has ${\invlim}^1\mathrm{Pic}(X_n)=0$ because it satisfies (d). In fact, we do not know if there is a morphism $f$ with ${\invlim}^1\mathrm{Pic}(X_n) \neq 0$ and $\mathrm{Ker}(\phi)[\ell]=0$ for a single $\ell>1$ not divisible by $p$. \end{remark} 

\begin{Question} \label{q:nontors} If $f\colon X \to \Spec A$ is a proper morphism over a complete local Noetherian ring with residue characteristic $p\geq 0$, when do non-torsion (or $p$-power torsion) classes in $H^2(\mathfrak{X},\mathbb{G}_m)$ extend to $H^2(X,\mathbb{G}_m)$? \end{Question}

\vspace{-.5cm}
\section{Examples} \label{sec:ex}

In this section we record a few explicit examples where 
\[\phi\colon H^2(X,\mathbb{G}_m) \to \invlim H^2(X_n,\mathbb{G}_m)\]
is not injective when the residue field has characteristic $p>0$. 

\begin{Example} \label{ex:vL} In \cite[Section 3]{vanluijkK3}, van Luijk writes down an explicit quartic surface $X \subset \mathbb{P}^3_{\mathbb{Z}}$, flat over $\mathbb{Z}$, such that \begin{enumerate}
\item the generic fiber is a K3 surface which has geometric Picard number $1$,
\item the fibers $X_p$ over $\mathbb{F}_p$ for $p=2$, $3$ are K3 surfaces with geometric Picard number $2$, and 
\item The Picard group of $X_2$ (resp.\ $X_3$) is freely generated by a hyperplane section and a conic (resp.\ a line) defined over $\mathbb{F}_p$ (see \cite[Rem.\ 3.4]{vanluijkK3}).
\end{enumerate}
This implies, if we take $X \times \Spec \mathbb{Z}_p \to \Spec \mathbb{Z}_p$ where $p=2$, $3$, then $\phi$ is not injective. This can also be verified by directly checking that ${\invlim}^1 \mathrm{Pic}(X \times \Spec \mathbb{Z}_p/p^n\mathbb{Z}_p)\cong \mathbb{Z}_p/\mathbb{Z}$ and applying Corollary \ref{cor:torsioninvlim} to conclude. \end{Example}

\begin{Example} \label{ex:fermat} If $A=\mathbb{Z}_3[\sqrt{-1}]$, then $\phi$ is not injective when $X \subset \mathbb{P}^3_A$ is the Fermat quartic.
Indeed, Tate observed that $\mathrm{rk}(\mathrm{NS}(X_s))=22$ (see \cite[Sec.\ 3]{tatepoles}) whereas $\mathrm{rk}(\mathrm{NS}(X_{\eta}))=20$. \end{Example}

\begin{Example} \label{ex:CM} In \cite{KM}, the construction is given of the universal elliptic curve with full level $N \geq 3$ structure:
\begin{enumerate} 
\item a smooth affine curve $Y(N) \to \mathbb{Z}[1/N]$ (see \cite[Cor. 4.7.2]{KM}),
\item a (relative) elliptic curve $\mathcal{E}(N) \to Y(N)$,
\item inducing a surjective morphism to the moduli stack of elliptic curves $Y(N) \to \mathcal{M}_{1,1,\mathbb{Z}[1/N]}$.
\end{enumerate}
For explicit equations defining the above when $N=3$, see \cite[(2.2.11)]{KM}. Then for any prime $p$ not dividing $N$, there are always points $s \colon \Spec \overline{\mathbb{F}}_{p} \to Y(N)$ such that the corresponding elliptic curve is supersingular, whereas the generic such elliptic curve is not (see \cite[\S V.4]{zbMATH05549721}). Thus, there exists a morphism $S=\Spec \overline{\mathbb{F}}_p[[t]] \to Y(N)$, so that the generic fiber of $\mathcal{E}_{S}=\mathcal{E}(N) \times_{Y(N)} S \to S$ is not supersingular, but the special fiber is. So $\mathcal{E}_{S} \to S$ is an elliptic curve whose ring of endomorphisms is of rank $4$ on the special fiber and has rank $\leq 2$ generically. Then $X=\mathcal{E}_{S} \times_S \mathcal{E}_{S} \to S$ is an abelian surface over $S$ with jumping N\'eron-Severi rank. It follows that $\phi$ is not injective for this $X/S$.\end{Example}

\section{Liftability is a closed condition}
\label{sec:liftingclosed}
From Section \ref{sec:elkik} it follows that $(\mathrm{Pic}^0(X_n))$ is Mittag-Leffler when $(A,\mathfrak{m},k)$ is a complete local Noetherian ring and $X$ a proper and flat $A$-scheme. In this section, we show $(\mathrm{Pic}^0(X_n))$ is Mittag-Leffler without flatness as long as $k$ is algebraically closed or finite.



\begin{remark} \label{fieldofreps}
Recall, by Cohen's structure theorem, if $A$ is a
complete local Noetherian ring which contains a field,
then there is an isomorphism $A \cong k[[x_1,..,x_n]]/I$, where
$k$ is the residue field of $A$.
When we refer to a \emph{complete local Noetherian $k$-algebra}, we implicitly mean that $k$ is a field of representatives, i.e., that the residue field is isomorphic to $k$.
\end{remark}

In mixed characteristic, we will not be able to anchor our analysis over a field of representatives. Instead, we work over the Witt vectors, denoted by $W(B)$ for an $\mathbb{F}_p$-algebra $B$. In this situation, a construction of Lipman yields a group scheme over $k$ that resembles a Picard scheme, at least when $k$ is perfect.

\begin{Theorem} (Lipman) \label{picardlikescheme}
Let $X \to \Spec A$ be a proper morphism of schemes over a complete local Noetherian ring
$(A, \mathfrak{m},k)$,
such that $k$ is perfect.
Then for each $n$, the fpqc sheaf associated to the functor on $k$-algebras 
\[B \mapsto \mathrm{Pic}(X_n \times_{\Spec W(k)} \Spec W(B))\] 
is representable by a commutative group scheme $\mathbf{P}_{X_n}$, locally of finite type over $k$,
with functorial exact sequences for perfect fields $k'$ over $k$
\[ 0\to \mathrm{Pic}(X_n \times_{\Spec W(k)} \Spec W(k')) \to \mathbf{P}_{X_n}(k') \to \mathrm{Br}(H^0(X_n,\mathcal{O}_{X_n})_{\mathrm{red}}\otimes_k k'). \]
Moreover, there are affine morphisms
\[ r'_{n,m}\colon \mathbf{P}_{X_n}\to \mathbf{P}_{X_m} \]
for every $n\ge m\ge 0$.
Furthermore, there is a natural isomorphism $\underline{\mathrm{Pic}}_{X_0/k} \cong \mathbf{P}_{X_0}$, and the identity component $\mathbf{P}^0_{X_n}$ is of finite type over $k$.
\end{Theorem}

\begin{proof}
All except the last two sentences is contained in
\cite[Thm.\ 1.2, Thm.\ 7.5]{lipmanpicard}.
The same argument in the last paragraph of \cite[p.\ 29]{lipmanpicard} shows that $\underline{\mathrm{Pic}}_{X_0/k} \cong \mathbf{P}_{X_0}$. Moreover, by definition, there are maps $r'_{n,m}$, and they are affine because the maps $f_n\colon \mathbf{P}_{X_n} \to \mathbf{P}_{1}$ defined in \cite[p.\ 29]{lipmanpicard} are all affine (see \cite[Prop.\ 2.5]{lipmanpicard}) and $f_m \circ r'_{n,m}=f_n$. To see that the identity component of $\mathbf{P}_{X_n}$ is of finite type over $k$,
we consider
\[ r'_{n,0}\colon \mathbf{P}_{X_n}\to \mathbf{P}_{X_0}\cong \underline{\mathrm{Pic}}_{X_0/k}. \]
Now $(r'_{n,0})^{-1}(\underline{\mathrm{Pic}}^0_{X_0/k})$ is of finite type
and contains $\mathbf{P}_{X_n}^0$.
\end{proof}

\begin{Theorem} \label{algclosedML} Let $X \to \Spec A$ be a proper morphism of schemes where $A$ is a complete local Noetherian ring whose residue field $k$ is of characteristic $p>0$. If $k$ is finite or algebraically closed, then the system of abelian groups $(\mathrm{Pic}^0(X_n))$ is Mittag-Leffler. \end{Theorem}

\begin{proof} We explain the proofs of the equal and mixed characteristic cases in parallel. For $n \geq m$ the scheme-theoretic images of the restriction morphisms $r_{n,m}\colon \underline{\mathrm{Pic}}_{X_n/k} \to \underline{\mathrm{Pic}}_{X_m/k}$ (resp.\ $r'_{n,m}\colon \mathbf{P}_{X_n} \to \mathbf{P}_{X_m}$, in the mixed characteristic case)
yield a decreasing sequence of closed subschemes of
$\underline{\mathrm{Pic}}_{X_m/k}$ (resp.\ $\mathbf{P}_{X_m}$), which
we intersect with $r_{m,0}^{-1}(\underline{\mathrm{Pic}}^0_{X_0/k})$ (resp.\ $(r'_{m,0})^{-1}(\underline{\mathrm{Pic}}^0_{X_0/k})$) to obtain $Z_{n,m}$ (resp.\ $A_{n,m}$). Since $r_{m,0}^{-1}(\underline{\mathrm{Pic}}^0_{X_0/k})$ (resp. $(r'_{m,0})^{-1}(\underline{\mathrm{Pic}}^0_{X_0/k})$) is of finite type over $k$, when $k$ is finite, the associated sets of $k$-points for each $m$ are each finite and hence the system $(\mathrm{Pic}^0(X_n))$ is Mittag-Leffler. Thus, we may assume $k$ is algebraically closed. 
We claim that a line bundle $L_m$ on $X_m$ lies in $Z_{n,m}(k)$ (resp. $A_{n,m}(k)$) only if there is a line bundle $L_n \in \mathrm{Pic}^0(X_n)$ which extends $L_m$. In the equicharacteristic case, this will imply
\[Z_{n,m}(k)=\mathrm{Im}[\mathrm{Pic}^0(X_n) \to \mathrm{Pic}^0(X_m)]\]
for all $n \geq m$. In the mixed characteristic case this implies
\[A_{n,m}(k)=\mathrm{Im}[\mathrm{Pic}^0(X_n) \to \mathrm{Pic}^0(X_m)]\]
for all $n \geq m$.

Choose $L_m \in Z_{n,m}(k)$ (or, in the mixed characteristic case, $L_m \in A_{n,m}(k)$). Since each $r_{n,m}$ (resp. $r_{n,m}'$) has a closed image set-theoretically (see \cite[Exp.\ $\mathrm{VI}_{\mathrm{B}}$, Prop. 1.2]{SGA3-1}) there is a $k$-point of $\underline{\mathrm{Pic}}_{X_n/k}$ (resp. $\mathbf{P}_{X_n}$) restricting to $L_m \in \underline{\mathrm{Pic}}_{X_m/k}(k)$ (resp. restricting to $L_m \in \mathbf{P}_{X_m}(k)$), because $r_{n,m}$ (resp. $r'_{n,m}$) is of finite type and $k$ is algebraically closed. 

In the equicharacteristic case, $\mathrm{Pic}(X_i) \to \underline{\mathrm{Pic}}_{X_i/k}(k)$ is a bijection for every $i \geq 0$ since $k$ is algebraically closed. Hence if $L_m \in \underline{\mathrm{Pic}}_{X_m/k}(k)$ lies in $Z_{n,m}(k)$, then there is an $L_n \in \mathrm{Pic}^0(X_n)$ which extends $L_m$. In the mixed characteristic case, a similar argument using the exact sequence in the statement of Theorem \ref{picardlikescheme} shows we may identify $\mathbf{P}_{X_n}(k)$ with $\mathrm{Pic}(X_n)$. This implies that if $L_m \in \mathbf{P}_{X_m}(k)$ lies in $A_{n,m}(k)$, then there is an $L_m \in \mathrm{Pic}^0(X_n)$ which extends $L_m$.


By the Noetherian property the decreasing sequence of closed subschemes $Z_{n,m}$ (and $A_{n,m}$) must stabilize for large $n$ and therefore $\mathrm{Im}[\mathrm{Pic}^0(X_n) \to \mathrm{Pic}^0(X_m)]$ stabilizes, as desired. \end{proof}

We end the section with a useful observation.

\begin{lemma} \label{BrauerCart} Let $X \to S=\Spec A$ be a proper morphism of schemes where $A$ is a complete local Noetherian $k$-algebra. Then there is an $N>0$ such that for any $n \geq m \geq N$, the following diagram
\[
\begin{tikzcd}
   \mathrm{Pic}(X_n) \arrow[d] \arrow[r] & \underline{\mathrm{Pic}}_{X_n/k}(k) \arrow[d]   \\
 \mathrm{Pic}(X_m ) \arrow[r] & \underline{\mathrm{Pic}}_{X_m/k}(k)
\end{tikzcd}
\]
is cartesian. In particular, $(\mathrm{Pic}(X_n))$ is Mittag-Leffler
if $(\underline{\mathrm{Pic}}_{X_n/k}(k))$ is Mittag-Leffler.
\end{lemma}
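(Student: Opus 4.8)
The plan is to compare $\mathrm{Pic}(X_n)$ with $\underline{\mathrm{Pic}}_{X_n/k}(k)$ through the Leray spectral sequence for the structure morphism $\pi_n\colon X_n \to \Spec k$, which is available because $A$, and hence each $A/\mathfrak{m}^{n+1}$, is a $k$-algebra, so every $X_n$ is proper over $k$. Writing $B_n=H^0(X_n,\mathcal{O}_{X_n})$, the sheaf $\pi_{n,*}\mathbb{G}_m$ on the small \'etale site of $\Spec k$ is $k'\mapsto (B_n\otimes_k k')^*$, which I denote $B_n^*$; the five-term exact sequence then reads
\[ 0 \to H^1(k,B_n^*) \to \mathrm{Pic}(X_n) \xrightarrow{c_n} \underline{\mathrm{Pic}}_{X_n/k}(k) \xrightarrow{\partial_n} H^2(k,B_n^*), \]
an equicharacteristic analogue of the exact sequence in Theorem \ref{picardlikescheme}. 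These sequences are functorial in $n$ via the restriction maps $B_n\to B_m$, and the lemma will follow from a diagram chase once the kernel and cokernel of $c_n$ are controlled uniformly in $n$.

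First I would dispose of the kernel. The nilradical filtration of $B_n$ exhibits the subsheaf $1+\mathrm{nil}(B_n)\subseteq B_n^*$ as an iterated extension of vector groups, and these have vanishing higher \'etale cohomology over the field $k$ (they are quasi-coherent, so their cohomology on the affine scheme $\Spec k$ vanishes in positive degrees). Hence $H^i(k,B_n^*)\cong H^i(k,(B_n)_{\mathrm{red}}^*)$ for $i\ge 1$. Since $(B_n)_{\mathrm{red}}$ is a reduced finite $k$-algebra, it is a product of finite field extensions, so Hilbert's Theorem 90 (applied factorwise, using that \'etale cohomology is insensitive to purely inseparable extensions) gives $H^1(k,(B_n)_{\mathrm{red}}^*)=0$; thus $c_n$ is injective for every $n$.

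The crux is the cokernel. Because every thickening $X_n$ has the same underlying reduced scheme $(X_0)_{\mathrm{red}}$, restriction of functions produces compatible injections $(B_n)_{\mathrm{red}}\hookrightarrow C\vcentcolon= H^0((X_0)_{\mathrm{red}},\mathcal{O})$ into a fixed finite-dimensional $k$-algebra, and compatibility with the maps $B_n\to B_m$ shows that $((B_n)_{\mathrm{red}})_n$ is a \emph{decreasing} chain of $k$-subalgebras of $C$. Such a chain stabilizes: there is an $N$ and a subalgebra $D\subseteq C$ with $(B_n)_{\mathrm{red}}=D$ for all $n\ge N$, the transition maps being the identity. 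Consequently, for $n\ge m\ge N$ the map $H^2(k,B_n^*)\to H^2(k,B_m^*)$ is identified with the identity of $H^2(k,D^*)$, so it is injective, and in particular injective on $\mathrm{Coker}(c_n)=\mathrm{Im}(\partial_n)$. I expect this stabilization of the reduced global functions, and hence of the Brauer obstruction group, to be the main point; it plays the role on $H^0$ that the theorem on formal functions plays in Proposition \ref{elementaryPic}.

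With $c_n$ injective and the cokernel transition injective, cartesianness is a routine chase. Surjectivity onto the fibre product amounts to showing that a point $P\in\underline{\mathrm{Pic}}_{X_n/k}(k)$ whose restriction to $X_m$ lifts to a line bundle already lifts on $X_n$: indeed $\partial_n(P)$ maps to $\partial_m(\rho(P))=0$ under the injective transition map to $H^2(k,B_m^*)$, whence $\partial_n(P)=0$ and $P\in\mathrm{Im}(c_n)$, while injectivity of $c_m$ pins down the restriction; injectivity into the fibre product is immediate from injectivity of $c_n$. Finally, for the Mittag-Leffler consequence, the cartesian property identifies $\mathrm{Im}[\mathrm{Pic}(X_n)\to\mathrm{Pic}(X_m)]$ with $c_m^{-1}\big(\mathrm{Im}[\underline{\mathrm{Pic}}_{X_n/k}(k)\to\underline{\mathrm{Pic}}_{X_m/k}(k)]\big)$ for $m\ge N$, so a Mittag-Leffler system of Picard-scheme points pulls back to a Mittag-Leffler system of Picard groups; the finitely many levels $m<N$ are handled by composing with the restriction from level $N$.
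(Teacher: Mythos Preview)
Your proof is correct and follows essentially the same route as the paper's: both use the low-degree Leray sequence for $X_n\to\Spec k$, identify the obstruction term with the Brauer group of $\Spec B_n$ (the paper writes this as $H^2(Y_n,\mathbb{G}_m)$ via the Stein factorization $Y_n=\Spec B_n$, which is the same group as your $H^2(k,B_n^*)$ by Shapiro's lemma), reduce to $(B_n)_{\mathrm{red}}$, and argue stabilization; the paper then invokes Example~\ref{finitevects}(3) in place of your explicit chase. Your stabilization argument via the decreasing chain of subalgebras of $C=H^0((X_0)_{\mathrm{red}},\mathcal{O})$ is a nice concrete way to see what the paper states more tersely (``one may check that there is an integer $N>0$ such that \dots\ $(Y_m)_{\mathrm{red}}\to(Y_n)_{\mathrm{red}}$ is an isomorphism'').
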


\begin{proof} Suppose $X_n \to Y_n \to \Spec A/\mathfrak{m}^{n+1}$ is the Stein factorization. Then the $Y_n$'s are Artinian semi-local schemes which all have the same number of points, and one may check that there is an integer $N>0$ such that for every $n \geq m \geq N$, the natural morphisms $Y_m \to Y_n$ induce isomorphisms $(Y_m)_{\mathrm{red}} \to (Y_n)_{\mathrm{red}}$. Thus, the claim follows by comparing the low degree terms for the Leray spectral sequence applied to $\mathbb{G}_{m,X_i}$ for the morphism $X_i \to \Spec k$ (for $i=m,n$) and this yields the commutative diagram with exact rows:
\begin{center}
\begin{tikzcd}[column sep=18pt]
  0 \arrow[r] & \mathrm{Pic}(X_n) \arrow[d] \arrow[r] & \underline{\mathrm{Pic}}_{X_n/k}(k) \arrow[d] \arrow[r] & H^2(Y_n,\mathbb{G}_{m}) \cong H^2((Y_n)_{\mathrm{red}},\mathbb{G}_{m}) \arrow[d]  \\
 0 \arrow[r] & \mathrm{Pic}(X_m) \arrow[r] & \underline{\mathrm{Pic}}_{X_m/k}(k)  \arrow[r] & H^2(Y_m,\mathbb{G}_{m})\cong H^2((Y_m))_{\mathrm{red}},\mathbb{G}_m)
\end{tikzcd}
\end{center}

\noindent and we conclude by applying Example \ref{finitevects} (3). \end{proof}

\section{Injectivity results in characteristic zero}
\label{sec:charzero}
The aim of this section is to show the system $(\mathrm{Pic}(X_n))$ is \emph{always} Mittag-Leffler when the local ring $A$ has residue field of characteristic zero. We begin with a definition which is a geometric avatar of Definition \ref{defNS}.

\begin{definition} \label{NSgroupscheme} Fix a complete local Noetherian $k$-algebra $(A,\mathfrak{m})$ (recall Remark \ref{fieldofreps}) and let $X$ be a scheme proper over $S=\Spec A$. We define the \emph{N\'eron-Severi group scheme} over $k$ of $X_n=X\times_S \Spec A/\mathfrak{m}^{n+1}$ as follows. For $n=0$, it is the cokernel in the exact sequence of commutative group schemes
\[0 \to \underline{\mathrm{Pic}}^0_{X_0/k} \to \underline{\mathrm{Pic}}_{X_0/k} \to \underline{\mathrm{NS}}_{X_0} \to 0,\]
and for $n\geq 1$ it is the upper cokernel in the following commutative diagram with exact rows
\[\begin{tikzcd}
  0 \arrow[r] & r_n^{-1}(\underline{\mathrm{Pic}}^0_{X_0/k}) \arrow[d] \arrow[r] & \underline{\mathrm{Pic}}_{X_n/k} \arrow[d,"r_n"] \arrow[r] & \underline{\mathrm{NS}}_{X_n} \arrow[d] \arrow[r] & 0  \\
  0 \arrow[r] &  \underline{\mathrm{Pic}}^0_{X_0/k} \arrow[r] &\underline{\mathrm{Pic}}_{X_0/k} \arrow[r] & \underline{\mathrm{NS}}_{X_0} \arrow[r] & 0
\end{tikzcd}
\]
\end{definition}

\begin{remark} \label{NSschemevgroup} Note that because $r_n^{-1}(\underline{\mathrm{Pic}}^0_{X_0/k})$ contains $\underline{\mathrm{Pic}}^0_{X_n/k}$ for every $n \geq 0$, $\underline{\mathrm{NS}}_{X_n}$ is an \'etale group scheme over $k$. Also, if $k$ is algebraically closed, then there is a natural isomorphism $\underline{\mathrm{NS}}_{X_n}(k)=\mathrm{NS}(X_n/S_n)$ where $S_n=\Spec A/\mathfrak{m}^{n+1}$ (compare with Definition \ref{defNS}). \end{remark}

\begin{lemma} \label{NSpullback} Let $X \to \Spec A$ and $X_n \to S_n$ be as above, and assume that $k$ is a field of characteristic zero and let $\bar{k}$ denote an algebraic closure. Then 
\begin{enumerate} \item $r_n^{-1}(\underline{\mathrm{Pic}}^0_{X_0/k})(\bar{k})$ is a divisible group, and 
\item $r_n^{-1}(\underline{\mathrm{Pic}}^0_{X_0/k})=\underline{\mathrm{Pic}}^0_{X_n/k}$ as subgroup schemes of $\underline{\mathrm{Pic}}_{X_n/k}$. \end{enumerate} \end{lemma}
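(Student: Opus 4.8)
The plan is to work throughout over the algebraic closure $\bar{k}$. Since $\mathrm{char}(k)=0$, Cartier's theorem makes every Picard scheme here smooth over $k$, so each subgroup scheme in sight is reduced and determined by its group of $\bar{k}$-points; moreover $\underline{\mathrm{Pic}}_{X_i/k}(\bar{k})=\mathrm{Pic}(X_{i,\bar{k}})$ because $\mathrm{Br}(\bar{k})=0$. Write $r_n\colon \underline{\mathrm{Pic}}_{X_n/k}\to\underline{\mathrm{Pic}}_{X_0/k}$, put $K_n=\ker(r_n)$ and $I=\mathrm{im}(r_n)$, and set $H=r_n^{-1}(\underline{\mathrm{Pic}}^0_{X_0/k})$. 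The whole argument rests on one characteristic-zero input, which I expect to be the step costing the most care: both $K_n$ and the cokernel $Q$ of $r_n\colon \underline{\mathrm{Pic}}^0_{X_n/k}\to\underline{\mathrm{Pic}}^0_{X_0/k}$ are unipotent group schemes.

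To establish this I would filter $X_0\hookrightarrow X_n$ by the square-zero thickenings $X_j\hookrightarrow X_{j+1}$ (square-zero since $(\mathfrak{m}^{j+1})^2\subseteq\mathfrak{m}^{j+2}$) and treat one step at a time via the exponential sequence $0\to\mathcal{J}\to\mathbb{G}_{m,X_{j+1}}\to\mathbb{G}_{m,X_j}\to 0$, where $\mathcal{J}$ is the coherent square-zero ideal. Deformation theory then exhibits the kernel at each stage as a quotient of the vector group attached to $H^1(X_0,\mathcal{J})$ and the cokernel as a subgroup of the vector group attached to $H^2(X_0,\mathcal{J})$; inducting over the $n$ steps shows $K_n$ is a successive extension of vector groups (hence a connected unipotent group, in particular contained in $\underline{\mathrm{Pic}}^0_{X_n/k}$) and that $Q$ is a successive extension of unipotent groups, hence unipotent. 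This is precisely where $\mathrm{char}=0$ enters: a unipotent group over a field of characteristic zero is a vector group, so $K_n(\bar{k})$ and $Q(\bar{k})$ are torsion-free and divisible, whereas in positive characteristic the analogous groups carry $p$-torsion, which is the very source of the N\'eron-Severi jumps in Theorem \ref{mainp}.

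Granting this, I prove (1). As $Q$ is unipotent it has no nontrivial finite subgroup, while $(I\cap\underline{\mathrm{Pic}}^0_{X_0/k})/I^0$ is a finite subgroup of $Q=\underline{\mathrm{Pic}}^0_{X_0/k}/I^0$; hence $I\cap\underline{\mathrm{Pic}}^0_{X_0/k}=I^0$ is connected. Since $r_n\colon\underline{\mathrm{Pic}}_{X_n/k}\to I$ is faithfully flat it is surjective on $\bar{k}$-points, so $r_n$ carries $H(\bar{k})$ onto $I(\bar{k})\cap\underline{\mathrm{Pic}}^0_{X_0/k}(\bar{k})=I^0(\bar{k})$ with kernel $K_n(\bar{k})$, yielding a short exact sequence $0\to K_n(\bar{k})\to H(\bar{k})\to I^0(\bar{k})\to 0$. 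Here $K_n(\bar{k})$ is a $\bar{k}$-vector space, hence divisible, and $I^0(\bar{k})$ is divisible as the $\bar{k}$-points of a smooth connected commutative group scheme in characteristic zero; an extension of a divisible group by a divisible group is divisible, giving (1).

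Finally I deduce (2). The inclusion $\underline{\mathrm{Pic}}^0_{X_n/k}\subseteq H$ is automatic, since $r_n$ sends the connected group $\underline{\mathrm{Pic}}^0_{X_n/k}$ into $\underline{\mathrm{Pic}}^0_{X_0/k}$; and $H$, being the preimage of the open subgroup $\underline{\mathrm{Pic}}^0_{X_0/k}$, is open in $\underline{\mathrm{Pic}}_{X_n/k}$, hence a union of connected components, so $\pi_0(H)=H(\bar{k})/\underline{\mathrm{Pic}}^0_{X_n/k}(\bar{k})$. Because numerical triviality is detected on the common underlying space $|X_n|=|X_0|$, one has $H\subseteq\underline{\mathrm{Pic}}^{\tau}_{X_n/k}$, so $\pi_0(H)$ is a subgroup of the finite group $\underline{\mathrm{Pic}}^{\tau}_{X_n/k}/\underline{\mathrm{Pic}}^0_{X_n/k}$ (Remark \ref{thmofbase}) and is therefore finite. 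By part (1) it is also a quotient of the divisible group $H(\bar{k})$, hence divisible. A finite divisible group is trivial, so $H(\bar{k})=\underline{\mathrm{Pic}}^0_{X_n/k}(\bar{k})$; as both sides are open and reduced this forces $H=\underline{\mathrm{Pic}}^0_{X_n/k}$, which is (2).
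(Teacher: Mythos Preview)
Your overall strategy matches the paper's: sandwich $H(\bar{k})$ between a divisible kernel and a divisible image, then deduce (2) from (1) using that $H/\underline{\mathrm{Pic}}^0_{X_n/k}$ is finite. The paper, however, works directly with $\bar{k}$-points and the single ideal $1+I$ of $X_0$ in $X_n$ (identified with a coherent sheaf via the logarithm), obtaining $H^1(X_{n,\bar k},1+I)\to H(\bar k)\to Z_n(\bar k)\to 0$; it then shows the image $Z_n(\bar{k})\subseteq\mathrm{Pic}^0(X_{0,\bar{k}})$ is divisible by a one-line obstruction argument: if $o(M^{\otimes m})=m\cdot o(M)=0$ in the $\mathbb{Q}$-vector space $H^2(X_{n,\bar{k}},1+I)$, then $o(M)=0$. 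This avoids your scheme-level induction entirely.

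Your route through ``$Q$ is unipotent'' has a circular step. The step-by-step deformation theory bounds the cokernel of the \emph{full} map $\underline{\mathrm{Pic}}_{X_{j+1}/k}\to\underline{\mathrm{Pic}}_{X_j/k}$ by a vector group, and induction then shows $C_n:=\underline{\mathrm{Pic}}_{X_0/k}/I$ is unipotent. But your $Q=\underline{\mathrm{Pic}}^0_{X_0/k}/I^0$ is not visibly a subquotient of $C_n$: it sits in
\[0\to (I\cap\underline{\mathrm{Pic}}^0_{X_0/k})/I^0\to Q\to \underline{\mathrm{Pic}}^0_{X_0/k}/(I\cap\underline{\mathrm{Pic}}^0_{X_0/k})\to 0,\]
where the right-hand term embeds in $C_n$ but the left-hand term is precisely the finite group you are invoking ``$Q$ unipotent'' to kill. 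The easy fix is to bypass $I^0$ altogether: since $C_n(\bar{k})$ is a $\mathbb{Q}$-vector space (hence torsion-free) and $\underline{\mathrm{Pic}}^0_{X_0/k}(\bar{k})$ is divisible, the kernel $(I\cap\underline{\mathrm{Pic}}^0_{X_0/k})(\bar{k})$ of $\underline{\mathrm{Pic}}^0_{X_0/k}(\bar{k})\to C_n(\bar{k})$ is automatically divisible, and your short exact sequence $0\to K_n(\bar{k})\to H(\bar{k})\to (I\cap\underline{\mathrm{Pic}}^0_{X_0/k})(\bar{k})\to 0$ then gives (1) directly. This is essentially the paper's obstruction argument in disguise.
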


\begin{proof} We know that $\underline{\mathrm{Pic}}^0_{X_n/k} \subset r_n^{-1}(\underline{\mathrm{Pic}}^0_{X_0/k})$, and that $r_n^{-1}(\underline{\mathrm{Pic}}^0_{X_0/k})$ is of finite type over $k$ (because the morphisms $\underline{\mathrm{Pic}}_{X_n/k} \to \underline{\mathrm{Pic}}_{X_0/k}$ and $\underline{\mathrm{Pic}}^0_{X_0/k} \to \Spec k$ are of finite type, see \cite[Exp.\ XIII, Thm. 3.5, 4.7 (iii)]{SGA6}). Let $H$ be the cokernel:
\[ 0 \to \underline{\mathrm{Pic}}^0_{X_n/k} \to r_n^{-1}(\underline{\mathrm{Pic}}^0_{X_0/k}) \to H \to 0.\]
Since $H$ is \'etale and of finite type over $k$, it must be finite and hence $H(\bar{k})$ must be a finite group. Thus, if $r_n^{-1}(\underline{\mathrm{Pic}}^0_{X_0/k})(\bar{k})$ is divisible, so is its quotient $H(\bar{k})$ and therefore $H$ must be the trivial group scheme, as desired. As such, it remains to show that $r_n^{-1}(\underline{\mathrm{Pic}}^0_{X_0/k})(\bar{k})$ is divisible. 

Let $Z_n \subset \underline{\mathrm{Pic}}^0_{X_0/k}$ denote the scheme-theoretic image of $r_n\colon r_n^{-1}(\underline{\mathrm{Pic}}^0_{X_0/k}) \to \underline{\mathrm{Pic}}^0_{X_0/k}$. Then $Z_n$ is a closed subgroup scheme of $\underline{\mathrm{Pic}}^0_{X_0/k}$ and by \cite[Exp.\ $\mathrm{VI}_{\mathrm{B}}$, Prop. 1.2]{SGA3-1}, $r_n$ maps surjectively onto $Z_n$. Thus, if $I \subset \mathcal{O}_{X_n \otimes_k \bar{k}}$ is the ideal which cuts out $X_0 \otimes_k \bar{k}$, there an exact sequence of groups
\[H^1(X_n \otimes_k \bar{k},1+I) \to r_n^{-1}(\underline{\mathrm{Pic}}^0_{X_0/k})(\bar{k}) \to Z_n(\bar{k}) \to 0\]
where $1+I$ is the kernel of the surjection $\mathbb{G}_{m,X_n \times_k \bar{k}} \to \mathbb{G}_{m,X_0 \times_k \bar{k}}$. Since $1+I$ is isomorphic to a coherent sheaf (via the logarithm), $H^1(X_n \otimes_k \bar{k},1+I)$ admits the structure of a $\mathbb{Q}$-vector space so it, and its image, are both divisible. Moreover, $Z_n(\bar{k}) \subset \underline{\mathrm{Pic}}^0_{X_0/k}(\bar{k})=\mathrm{Pic}^0(X_0 \otimes_k \bar{k})$ consists of the subgroup of line bundles $L \in \mathrm{Pic}^0(X_0 \otimes_k \bar{k})$ which extend to $X_n \otimes_k \bar{k}$, i.e., those $L \in \mathrm{Pic}^0(X_0 \otimes_k \bar{k})$ with $0=o(L) \in H^2(X_n \otimes_k \bar{k},1+I)$. This is a divisible subgroup if $\mathrm{Pic}^0(X_0 \otimes_k \bar{k})$ is. Indeed, if $n>0$ is an integer and $M^{\otimes n}$ extends for $M \in \mathrm{Pic}^0(X_0 \otimes_k \bar{k})$, then so does $M$, because $o(M^{\otimes n})=no(M)=0$ if and only if $o(M)=0$ (since $H^2(X_n,1+I)$ has the structure of a $\mathbb{Q}$-vector space, as above). So it remains to show $\mathrm{Pic}^0(X_0 \otimes_k \bar{k})$ is divisible.

Note that $\underline{\mathrm{Pic}}^0_{X_0/k}$ is a connected commutative group scheme over a field of characteristic zero and is therefore also smooth over $k$. It follows that the multiplication by $m$ map is surjective for $m>0$. Indeed, by Chevalley's structure theorem, $\underline{\mathrm{Pic}}^0_{X_0/k}$ is an extension of an abelian variety $A$ by a smooth connected commutative affine group scheme $G$. By \cite[Cor. 16.15]{zbMATH06713849}, there is an isomorphism $G=T \times \mathbb{G}_a^{\oplus j}$ where $T$ is a torus. Since the multiplication by $m$ map is surjective on $A$ and on $G$, it follows that it is so for $\underline{\mathrm{Pic}}^0_{X_n/k}$ as well. So the epimorphism of sheaves $(-)^{\otimes m}\colon \underline{\mathrm{Pic}}^0_{X_0/k} \to \underline{\mathrm{Pic}}^0_{X_0/k}$ ($m>0$) gives rise to a surjective map of sections over $\bar{k}$. Therefore the abelian group $\mathrm{Pic}^0(X_0 \otimes_k \bar{k})$ is divisible. \end{proof}

\begin{lemma} \label{completedtensor} Let $X \to S=\Spec A$ be a proper morphism over a complete local Noetherian $k$-algebra $(A,\mathfrak{m})$ and suppose $k'/k$ is a field extension. Then if $S_n=\Spec A/\mathfrak{m}^{n+1}$, $X_n=X \times_S S_n$ and we set $(X_n)_{k'}=X_n \times_k k'$, there is a proper morphism $X' \to S'=\Spec A'$ over a complete local Noetherian $k'$-algebra $(A',\mathfrak{m}')$ such that the formal system of morphisms $((X_n)_{k'} \to S_n \times_k k')$ is isomorphic to $(X' \times_{S'} S_n' \to S_n')$, where $S_n'=\Spec A'/(\mathfrak{m}')^{n+1}$. \end{lemma}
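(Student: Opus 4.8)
The plan is to avoid algebraizing any formal family---which for a mere proper (not necessarily projective) morphism would require an ample sheaf that is unavailable---and instead to realize $X'$ as an honest base change of the given $X\to\Spec A$ along a natural homomorphism $A\to A'$. The one insight that drives everything is that the right base ring $A'$ comes equipped with a map \emph{from} $A$, so that $X'$ and all its reductions are simply pullbacks of $X$ and the $X_n$.

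First I would construct the base. Set $A'=\invlim_n\bigl(A/\mathfrak{m}^{n+1}\otimes_k k'\bigr)$, the completed tensor product $A\,\widehat{\otimes}_k\,k'$, and write $B_n=A/\mathfrak{m}^{n+1}\otimes_k k'$. Each $B_n$ is a local Artinian $k'$-algebra: its maximal ideal is the nilpotent ideal $(\mathfrak{m}/\mathfrak{m}^{n+1})\otimes_k k'$, and the residue field is $k\otimes_k k'=k'$. The compatible system $A\to A/\mathfrak{m}^{n+1}\to B_n$ furnishes a canonical $k$-algebra map $A\to A'$. It then remains to check that $A'$ is a complete local Noetherian $k'$-algebra with $A'/(\mathfrak{m}')^{n+1}=B_n$, which I would do via Cohen's structure theorem.

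By Remark~\ref{fieldofreps} write $A=R/I$ with $R=k[[x_1,\dots,x_d]]$ and $I\subseteq(x_1,\dots,x_d)$, and set $R'=k'[[x_1,\dots,x_d]]$, $I'=IR'$, and (provisionally) $A'=R'/I'$; I would then identify this with the inverse limit above. As a quotient of the complete local Noetherian ring $R'$, this $A'$ is complete local Noetherian with residue field $k'$. Since everything is determined modulo $(x)^{n+1}$, one computes $A'/(\mathfrak{m}')^{n+1}=R'/\bigl((x)^{n+1}+I'\bigr)$; tensoring the exact sequence $0\to \bar I\to R/(x)^{n+1}\to A/\mathfrak{m}^{n+1}\to 0$ (where $\bar I$ is the image of $I$) with the flat $k$-algebra $k'$ identifies this quotient with $A/\mathfrak{m}^{n+1}\otimes_k k'=B_n$, compatibly in $n$. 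Passing to the limit recovers $A'=\invlim_n B_n$, reconciling the two descriptions and showing in particular that the inverse limit is Noetherian.

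Finally I would set $X'=X\times_{\Spec A}\Spec A'$, which is proper over $A'$ by base change. To see it restricts correctly, factor the composite $A\to A'\to A'/(\mathfrak{m}')^{n+1}=B_n$ through $A\to A/\mathfrak{m}^{n+1}\to B_n$; then
\[ X'\times_{S'}S_n'=X\times_{\Spec A}\Spec B_n=X_n\times_{\Spec A/\mathfrak{m}^{n+1}}\Spec B_n=X_n\times_{\Spec k}\Spec k'=(X_n)_{k'}, \]
the penultimate equality using $\Spec B_n=\Spec(A/\mathfrak{m}^{n+1})\times_{\Spec k}\Spec k'$. Since $S_n\times_k k'=\Spec B_n=S_n'$ and every identification above is natural in $n$, these assemble into the desired isomorphism of formal systems. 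The construction is thus essentially free of obstacles once one resists building $X'$ by algebraizing the formal family $((X_n)_{k'})$; the only points requiring care are the flatness bookkeeping in the computation of $A'/(\mathfrak{m}')^{n+1}$ and the verification that the two descriptions of $A'$ coincide.
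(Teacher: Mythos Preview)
Your proposal is correct and follows the same approach as the paper: both set $A'=A\,\widehat{\otimes}_k\,k'$ and obtain $X'$ by base change along $\Spec A'\to\Spec A$. The paper's proof is a two-sentence sketch that asserts the properties of $A'$ without justification, whereas you supply the details via Cohen's structure theorem and the flatness of $k'$ over $k$; your verification that $A'/(\mathfrak m')^{n+1}\cong A/\mathfrak m^{n+1}\otimes_k k'$ is exactly the content the paper leaves implicit.
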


\begin{proof}
Set $A'=A\,\widehat{\otimes}_k\,k'$ (completed tensor product).
This is a complete local Noetherian $k'$-algebra $(A',\mathfrak{m}')$, and after base change along $\Spec A' \to \Spec A$ we obtain a proper morphism $X' \to \Spec A'$ with the desired properties.
\end{proof}

\begin{lemma} \label{torsioncharzero} Let $X \to \Spec A$ be a proper morphism over a complete local Noetherian $k$-algebra $(A,\mathfrak{m})$ where $k$ has characteristic zero. If we set $X_n=X \times_S \Spec A/\mathfrak{m}^{n+1}$, then there is an $N>0$ such that for every $n \geq m \geq N$ the restriction map $\mathrm{Pic}(X_n) \to \mathrm{Pic}(X_m)$ induces an isomorphism on torsion subgroups. \end{lemma}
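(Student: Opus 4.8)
The plan is to reduce the statement to a uniform stabilization of the $\ell$-torsion subgroups $\mathrm{Pic}(X_n)[\ell]$, for all positive integers $\ell$ simultaneously, by combining the Kummer sequence with the fact that the underlying \'etale sites of the $X_n$ all coincide. Since $k$ has characteristic zero, every $\ell$ is invertible on each $X_n$, so $\mu_\ell$ is a locally constant \'etale sheaf; as each closed immersion $X_m \hookrightarrow X_n$ is a nilpotent thickening, topological invariance of the \'etale site gives canonical isomorphisms $H^i(X_n,\mu_\ell) \xrightarrow{\sim} H^i(X_0,\mu_\ell)$ under which the restriction maps become the identity. In particular $V_\ell \vcentcolon= H^1(X_0,\mu_\ell)$ is a fixed finite group, independent of $n$.

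First I would extract from the Kummer sequence on $X_n$ the short exact sequence
\[ 0 \to H^0(X_n,\mathbb{G}_m)/\ell \xrightarrow{\delta_n} V_\ell \to \mathrm{Pic}(X_n)[\ell] \to 0, \]
functorial in $n$, in which the map induced on the middle term by restriction is the identity of $V_\ell$. Commutativity then shows $\mathrm{Im}(\delta_n)=\delta_m\big(\mathrm{Im}[H^0(X_n,\mathbb{G}_m)/\ell \to H^0(X_m,\mathbb{G}_m)/\ell]\big)$, so the restriction $\mathrm{Pic}(X_n)[\ell] \to \mathrm{Pic}(X_m)[\ell]$ is an isomorphism as soon as $H^0(X_n,\mathbb{G}_m)/\ell \to H^0(X_m,\mathbb{G}_m)/\ell$ is surjective: the two images $\mathrm{Im}(\delta_n)$, $\mathrm{Im}(\delta_m)$ in $V_\ell$ then coincide, and the identity on $V_\ell$ descends to the identity on the common quotient. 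Thus the problem becomes one about the global units $B_n^\ast \vcentcolon= H^0(X_n,\mathcal{O}_{X_n})^\ast$ modulo $\ell$.

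Next I would analyze these units. Writing $\mathcal{N}_n \subset B_n \vcentcolon= H^0(X_n,\mathcal{O}_{X_n})$ for the nilradical, the subgroup $1+\mathcal{N}_n$ is uniquely divisible, because in characteristic zero the truncated logarithm and exponential identify it with the $\mathbb{Q}$-vector space $(\mathcal{N}_n,+)$; consequently the quotient $B_n^\ast \to (B_{n,\mathrm{red}})^\ast$ induces an isomorphism $B_n^\ast/\ell \xrightarrow{\sim} (B_{n,\mathrm{red}})^\ast/\ell$, where $B_{n,\mathrm{red}} = H^0(X_n,\mathcal{O}_{X_n})_{\mathrm{red}}$. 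As in the proof of Lemma \ref{BrauerCart}, the reduced Stein factorizations stabilize: there is an integer $N>0$, depending only on $X \to \Spec A$ and not on $\ell$, such that $B_{n,\mathrm{red}} \to B_{m,\mathrm{red}}$ is an isomorphism for all $n \geq m \geq N$. Combining these facts, $B_n^\ast/\ell \to B_m^\ast/\ell$ is an isomorphism for every $\ell$ once $n \geq m \geq N$, whence $\mathrm{Pic}(X_n)[\ell] \to \mathrm{Pic}(X_m)[\ell]$ is an isomorphism for every $\ell$ and the same $N$.

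Finally, since a homomorphism of torsion abelian groups that restricts to an isomorphism on $\ell$-torsion for every $\ell$ is itself an isomorphism (inject on each $G[\ell]$ forces injectivity, surject onto each $H[\ell]$ forces surjectivity), I conclude that $\mathrm{Pic}(X_n)_{\mathrm{tors}} \to \mathrm{Pic}(X_m)_{\mathrm{tors}}$ is an isomorphism for $n \geq m \geq N$. The delicate point is the uniformity of $N$ across all $\ell$: this is exactly what the two characteristic-zero inputs provide. Unique divisibility of $1+\mathcal{N}_n$ removes the possibly growing unipotent part of the units in a single stroke for every $\ell$, while the purely geometric stabilization of $B_{n,\mathrm{red}}$ supplies an $\ell$-independent threshold. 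In positive characteristic both ingredients fail (the unipotent part carries $p$-torsion and $\mu_p$ is not \'etale), consistent with the $p$-power phenomena isolated elsewhere in the paper.
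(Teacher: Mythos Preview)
Your proof is correct and follows essentially the same approach as the paper's own proof: both use the Kummer sequence together with topological invariance of the \'etale site to reduce to surjectivity of $B_n^\times/\ell \to B_m^\times/\ell$, then kill the nilpotent part of the units via the logarithm (unique divisibility of $1+\mathcal{N}_n$ in characteristic zero) and invoke the stabilization of the reduced Stein factorization from Lemma~\ref{BrauerCart} to obtain a uniform $N$. The only cosmetic difference is that the paper phrases the last step as ``$B_n \to B_m$ induces an isomorphism on residue fields,'' which amounts to your ``$B_{n,\mathrm{red}} \to B_{m,\mathrm{red}}$ is an isomorphism.''
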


\begin{proof} For each $i \geq 0$ and $\ell>0$, if $X_i \to Y_i=\Spec B_i \to \Spec k$ is the Stein factorization, the Kummer sequence yields the following exact sequence:
\[0 \to B_i^{\times}/B_i^{\times \ell} \to H^1(X_i,\mu_{\ell}) \to \mathrm{Pic}(X_i)[\ell] \to 0.\]
Since $H^1(X_m,\mu_{\ell}) \to H^1(X_0,\mu_{\ell})$ is an isomorphism for every $\ell,m$ by topological invariance of the \'etale site, it suffices to show that for all large $n \geq m$, the maps $B_n^{\times}/B_n^{\times \ell} \to B_m^{\times}/B_m^{\times \ell}$ are surjective for every $\ell>0$. As in the proof of Lemma \ref{BrauerCart} there is an $N>0$ such that the map $B_n \to B_m$ induces an isomorphism on all residue fields for every $n \geq m \geq N$, which we denote by $k_1,..,k_c$. Then we can write 
\vspace{-.3cm}
\[0 \to 1+J_n \to B_n^{\times} \to \prod_{i=1}^c k_i^{\times} \to 0\]
where $J_n$ is the radical of $B_n$. Because the $\ell$th power map induces an isomorphism on $1+J_n$ (since it is a $\mathbb{Q}$-vector space, as above), the snake lemma implies $B_n^{\times}/B_n^{\times \ell} \to \prod (k_i/k_i^{\times \ell})$ is an isomorphism. Thus, for all such $n \geq m \geq N$, we may conclude that $B_n^{\times}/B_n^{\times \ell} \to B_m^{\times}/B_m^{\times \ell}$ is an isomorphism, as desired. \end{proof}

\begin{lemma} \label{semiabelian} Let $X \to \Spec A$ be a proper morphism over a complete local Noetherian $k$-algebra $(A,\mathfrak{m})$ where $k$ has characteristic zero. Then for each $n \geq 0$, the connected component of the Picard scheme of $X_n$ admits a semi-abelian decomposition, i.e., there is an exact sequence
\begin{equation} \label{eqn:conndecomp}
0 \to \mathbb{G}_a^{\oplus \ell_n} \to \underline{\mathrm{Pic}}^0_{X_n/k} \to E_n \to 0
\end{equation}
where $E_n$ is a semi-abelian variety. Moreover the morphisms $\underline{\mathrm{Pic}}^0_{X_n/k} \to \underline{\mathrm{Pic}}^0_{X_m/k}$ respect this decomposition and, for sufficiently large $n \geq m$, the induced maps $E_n \to E_m$ are isomorphisms. \end{lemma}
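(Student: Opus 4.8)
The plan is to first build the decomposition from the structure theory of commutative group schemes, then check functoriality of the splitting, and finally—this is the crux—deduce the eventual constancy of the semi-abelian parts from the torsion computation of Lemma \ref{torsioncharzero}.

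First I would use that, since $k$ has characteristic zero, the connected commutative finite-type $k$-group $\underline{\mathrm{Pic}}^0_{X_n/k}$ is automatically smooth (Cartier). Chevalley's structure theorem then presents it as an extension $0 \to G_n \to \underline{\mathrm{Pic}}^0_{X_n/k} \to A_n \to 0$ with $A_n$ an abelian variety and $G_n$ a smooth connected commutative affine $k$-group, and by \cite[Cor.\ 16.15]{zbMATH06713849} (exactly as in the proof of Lemma \ref{NSpullback}) one has $G_n = T_n \times \mathbb{G}_a^{\oplus \ell_n}$ with $T_n$ a torus. Quotienting by the unipotent radical $\mathbb{G}_a^{\oplus \ell_n}$ produces $E_n = \underline{\mathrm{Pic}}^0_{X_n/k}/\mathbb{G}_a^{\oplus \ell_n}$, which sits in $0 \to T_n \to E_n \to A_n \to 0$ and is therefore semi-abelian; this yields the exact sequence \eqref{eqn:conndecomp}. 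To see that the restriction homomorphism $\underline{\mathrm{Pic}}^0_{X_n/k} \to \underline{\mathrm{Pic}}^0_{X_m/k}$ respects this splitting, I would note that it carries $G_n$ into $G_m$ (the composite $G_n \to A_m$ is a map from an affine group to an abelian variety, hence constant), and that inside the affine parts it sends unipotent elements to unipotent elements and the torus to the torus; thus $\mathbb{G}_a^{\oplus \ell_n} \to \mathbb{G}_a^{\oplus \ell_m}$, and the homomorphism descends to $E_n \to E_m$.

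The main work is the stabilization. Applying Lemma \ref{completedtensor} with $k' = \bar{k}$, I may base-change the whole family to $\bar{k}$; over an algebraically closed field the Brauer contribution appearing in the Leray sequence (cf.\ Lemma \ref{BrauerCart}) vanishes, so $\mathrm{Pic}(X_n \otimes_k \bar{k}) = \underline{\mathrm{Pic}}_{X_n/k}(\bar{k})$. Lemma \ref{torsioncharzero} then furnishes a single $N$, uniform over all torsion orders, such that for $n \geq m \geq N$ the restriction induces an isomorphism on full torsion subgroups. The decisive point is that in characteristic zero $\mathbb{G}_a$ is torsion-free and uniquely divisible, so the entire torsion of $\underline{\mathrm{Pic}}^0_{X_n/k}(\bar{k})$ sits in $E_n(\bar{k})$; since $E_n(\bar{k})$ is divisible while the component group $(\underline{\mathrm{Pic}}/\underline{\mathrm{Pic}}^0)(\bar{k})$ is finitely generated, the maximal divisible subgroup of $\mathrm{Pic}(X_n \otimes_k \bar{k})_{\mathrm{tors}}$ is precisely $E_n(\bar{k})_{\mathrm{tors}}$. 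As the maximal divisible subgroup is functorial, I obtain isomorphisms $E_n(\bar{k})_{\mathrm{tors}} \xrightarrow{\sim} E_m(\bar{k})_{\mathrm{tors}}$ for all $n \geq m \geq N$.

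Finally I would promote this to an isomorphism of group schemes. A homomorphism $f\colon E_n \to E_m$ of semi-abelian varieties over $\bar{k}$ that is bijective on torsion points is an isomorphism: its kernel cannot be positive-dimensional, since a positive-dimensional connected subgroup of a semi-abelian variety is again semi-abelian and hence carries infinitely many torsion points, so $\ker f$ is finite and its torsion $\bar{k}$-points must be trivial by injectivity on torsion; meanwhile the image of $f$ is a semi-abelian subvariety containing the Zariski-dense torsion of $E_m$, hence all of $E_m$. A bijective homomorphism of smooth $\bar{k}$-groups is an isomorphism, and this descends to $k$ because $E_n \to E_m$ is defined over the perfect field $k$ (the Chevalley decomposition and the unipotent radical are). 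The main obstacle is exactly this last circle of ideas: isolating the fact that all the ``infinitesimal'' growth in $h^1(\mathcal{O}_{X_n})$ is absorbed by the torsion-free $\mathbb{G}_a$-part, so that the semi-abelian part is seen entirely through torsion, and then using the density of torsion to upgrade a torsion-level isomorphism to an isomorphism of semi-abelian varieties.
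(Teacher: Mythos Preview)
Your proof is correct and follows essentially the same route as the paper: build the decomposition from Chevalley plus \cite[Cor.\ 16.15]{zbMATH06713849}, base-change to $\bar{k}$ via Lemma \ref{completedtensor}, invoke Lemma \ref{torsioncharzero} to control torsion, and use that torsion determines a semi-abelian variety. The only cosmetic difference is how you isolate $E_n(\bar{k})_{\mathrm{tors}}$ inside $\mathrm{Pic}(X_n\otimes\bar{k})_{\mathrm{tors}}$: you take the maximal divisible subgroup, whereas the paper uses the cartesian square coming from Lemma \ref{NSpullback}(2) (the equality $\underline{\mathrm{Pic}}^0_{X_n/k}=r_n^{-1}(\underline{\mathrm{Pic}}^0_{X_0/k})$) to pull back the torsion isomorphism directly to $\underline{\mathrm{Pic}}^0$; both devices do the same job.
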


\begin{proof} The smooth, connected, and commutative groups $\underline{\mathrm{Pic}}^0_{X_n/k}$ can be written as 
\[0 \to \mathbb{G}_a^{\oplus \ell_n} \times T_n \to \underline{\mathrm{Pic}}^0_{X_n/k} \to A_n \to 0\]
where $T_n$ is a torus and $A_n$ is an abelian variety by Chevalley's structure theorem and \cite[Cor. 16.15]{zbMATH06713849}. Thus, $\underline{\mathrm{Pic}}^0_{X_n/k}$ fits into an exact sequence as in (\ref{eqn:conndecomp}). Moreover because $\mathbb{G}_a$ admits no nontrivial maps to an abelian variety or a torus, the morphisms $\underline{\mathrm{Pic}}^0_{X_n/k} \to \underline{\mathrm{Pic}}^0_{X_m/k}$ respect these decompositions. Note that this decomposition is compatible with base change and since $E_n \to E_m$ is an isomorphism if $E_n \otimes_k \bar{k} \to E_m \otimes_k \bar{k}$ is, by Lemma \ref{completedtensor} we may assume that $k$ is algebraically closed. Now since the $E_i$ are semi-abelian, it suffices to show that for large $n \geq m$ the map $E_n(k) \to E_m(k)$ induces an isomorphism on torsion subgroups. Moreover because the characteristic of $k$ is zero, the additive group has trivial torsion and thus, this is the same as showing $\underline{\mathrm{Pic}}^0_{X_n/k}(k) \to \underline{\mathrm{Pic}}^0_{X_m/k}(k)$ induces an isomorphism on torsion subgroups. However, the diagram 

\[
\begin{tikzcd}
   \underline{\mathrm{Pic}}^0_{X_n/k}(k)_{\mathrm{tors}}=[r^{-1}_n(\underline{\mathrm{Pic}}^0_{X_0/k})(k)]_{\mathrm{tors}} \arrow[d] \arrow[r] & \underline{\mathrm{Pic}}_{X_n/k}(k)_{\mathrm{tors}}=\mathrm{Pic}(X_n)_{\mathrm{tors}} \arrow[d]   \\
 \underline{\mathrm{Pic}}^0_{X_m/k}(k)_{\mathrm{tors}}=[r^{-1}_m(\underline{\mathrm{Pic}}^0_{X_0/k})(k)]_{\mathrm{tors}} \arrow[r] & \underline{\mathrm{Pic}}_{X_m/k}(k)_{\mathrm{tors}}=\mathrm{Pic}(X_m)_{\mathrm{tors}} \\
\end{tikzcd}\]
\vspace{-.8cm}

\noindent is cartesian (since the equalities on the left hold, by Lemma \ref{NSpullback}), so we may conclude by applying Lemma \ref{torsioncharzero}.\end{proof}

\begin{corollary} \label{cor:pic0MLchar0}  Let $X \to \Spec A$ be a proper morphism over a complete local Noetherian $k$-algebra $(A,\mathfrak{m})$, where $k$ has characteristic zero. Then the systems $(\underline{\mathrm{Pic}}^0_{X_n/k}(k))$ and $(\mathrm{Pic}^0(X_n))$ are both Mittag-Leffler. \end{corollary}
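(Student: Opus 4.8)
The plan is to prove the two Mittag-Leffler statements in turn, deducing the one for $(\mathrm{Pic}^0(X_n))$ from the one for $(\underline{\mathrm{Pic}}^0_{X_n/k}(k))$ via the comparison square of Lemma \ref{BrauerCart}.

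First I would show that $(\underline{\mathrm{Pic}}^0_{X_n/k}(k))$ is Mittag-Leffler using the semi-abelian decomposition of Lemma \ref{semiabelian}. For each $n$ that lemma gives a short exact sequence of commutative group schemes $0 \to \mathbb{G}_a^{\oplus \ell_n} \to \underline{\mathrm{Pic}}^0_{X_n/k} \to E_n \to 0$ with $E_n$ semi-abelian, compatible with the transition morphisms, and with $E_n \to E_m$ an isomorphism for $n \geq m$ sufficiently large. Taking $k$-points and using $H^1(\Spec k,\mathbb{G}_a^{\oplus \ell_n})=0$ (the vector group is cohomologically trivial over the field $k$ via its coherent module structure), I obtain a short exact sequence of inverse systems of abelian groups
\[ 0 \to (k^{\ell_n}) \to (\underline{\mathrm{Pic}}^0_{X_n/k}(k)) \to (E_n(k)) \to 0. \]
The system $(k^{\ell_n})$ consists of finite-dimensional $k$-vector spaces with $k$-linear transition maps (in characteristic zero a homomorphism of vector groups is linear), hence is Mittag-Leffler by Example \ref{finitevects}(1); the system $(E_n(k))$ has transition maps that are eventually isomorphisms and is therefore Mittag-Leffler as well. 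Example \ref{finitevects}(4) then yields the Mittag-Leffler property for the middle system.

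Second, I would transfer this to $(\mathrm{Pic}^0(X_n))$. By the exact sequence in Lemma \ref{BrauerCart} the natural map $c_n\colon \mathrm{Pic}(X_n) \to \underline{\mathrm{Pic}}_{X_n/k}(k)$ is injective and identifies $\mathrm{Pic}^0(X_n)$ with $\mathrm{Pic}(X_n)\cap \underline{\mathrm{Pic}}^0_{X_n/k}(k)$; the same lemma supplies an $N$ such that the square comparing $\mathrm{Pic}$ with $\underline{\mathrm{Pic}}(k)$ is cartesian for all $n \geq m \geq N$. The key check is that intersecting this square with the identity components keeps it cartesian: a $k$-point of $\underline{\mathrm{Pic}}^0_{X_n/k}$ whose image in $\underline{\mathrm{Pic}}^0_{X_m/k}(k)$ comes from a line bundle on $X_m$ is, by cartesianness of the larger square, itself represented by a line bundle on $X_n$, which then automatically lies in $\mathrm{Pic}^0(X_n)$. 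Equivalently, the preimage of $\mathrm{Pic}^0(X_m)$ under the transition map $\underline{\mathrm{Pic}}^0_{X_n/k}(k) \to \underline{\mathrm{Pic}}^0_{X_m/k}(k)$ equals $\mathrm{Pic}^0(X_n)$ for all $n \geq m \geq N$.

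This last statement is precisely the hypothesis of Example \ref{finitevects}(3), applied to the inclusions $\mathrm{Pic}^0(X_n) \hookrightarrow \underline{\mathrm{Pic}}^0_{X_n/k}(k)$ for the tail $n \geq N$; since the ambient system is Mittag-Leffler by the first step, so is $(\mathrm{Pic}^0(X_n))$, and the Mittag-Leffler property for a tail is equivalent to that for the whole system. I expect the main obstacle to be this second step—verifying that the cartesian square of Lemma \ref{BrauerCart} restricts to the identity components so that Example \ref{finitevects}(3) applies—whereas the first step is a routine d\'evissage once Lemma \ref{semiabelian} is available.
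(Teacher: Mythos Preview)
Your proposal is correct. The second step, deducing the Mittag-Leffler property of $(\mathrm{Pic}^0(X_n))$ from that of $(\underline{\mathrm{Pic}}^0_{X_n/k}(k))$ via the cartesian square of Lemma \ref{BrauerCart} restricted to identity components, is exactly what the paper does (the paper simply writes ``by Lemma \ref{BrauerCart}'' and leaves the restriction to $\mathrm{Pic}^0$ implicit; you have spelled it out).

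Your first step differs from the paper's and is arguably cleaner. The paper takes the scheme-theoretic images $Z_{n,m}\subset \underline{\mathrm{Pic}}^0_{X_m/k}$ of the transition morphisms, uses Noetherianity of the target to stabilize the $Z_{n,m}$, and then invokes the semi-abelian decomposition (Lemma \ref{semiabelian}) only to show that for $n\ge m\gg 0$ the fibers of $r^0_{n,m}$ are pseudo-torsors under a vector group, hence have $k$-points, so that $Z_{n,m}(k)$ coincides with the set-theoretic image. You instead take $k$-points of the short exact sequence from Lemma \ref{semiabelian} directly (using $H^1(\Spec k,\mathbb{G}_a)=0$), obtain a short exact sequence of inverse systems with outer terms $(k^{\ell_n})$ and $(E_n(k))$, and conclude via Example \ref{finitevects}(1),(4). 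Both arguments rest on the same key input, Lemma \ref{semiabelian}; your d\'evissage avoids the scheme-theoretic image machinery at the cost of the (easy, characteristic-zero) observation that homomorphisms of vector groups are $k$-linear so that Example \ref{finitevects}(1) applies, while the paper's route is closer in spirit to the positive-characteristic argument of Theorem \ref{algclosedML}.
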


\begin{proof} Let $Z_{n,m} \subset \underline{\mathrm{Pic}}^0_{X_m/k}$ denote the scheme-theoretic image of 
\[r_{n,m}^0\colon \underline{\mathrm{Pic}}^0_{X_n/k} \to \underline{\mathrm{Pic}}^0_{X_m/k}.\]
Each $r_{n,m}^0$ has a closed set-theoretic image by \cite[Exp.\ $\mathrm{VI}_{\mathrm{B}}$, Prop. 1.2]{SGA3-1}. We claim that the induced map $\underline{\mathrm{Pic}}^0_{X_n/k}(k) \to Z_{n,m}(k)$ is surjective for all $n \geq m \geq N$ for some large natural number $N$, this will imply $Z_{n,m}(k)=\mathrm{Im}[\underline{\mathrm{Pic}}^0_{X_n/k}(k) \to \underline{\mathrm{Pic}}^0_{X_m/k}(k)]$. Since the descending chain of subschemes $(Z_{i,m})_{i=0}^{\infty}$ stabilizes for any given $m$ (by the Noetherian property), this would imply $(\underline{\mathrm{Pic}}^0_{X_n/k}(k))$ is Mittag-Leffler (and $(\mathrm{Pic}^0(X_n))$ as well by Lemma \ref{BrauerCart}).

Consider the semi-abelian decomposition of $\underline{\mathrm{Pic}}^0_{X_i/k}$ as in (\ref{eqn:conndecomp}) and choose an $N>0$ such that $E_n \to E_m$ is an isomorphism for all $n\geq m\geq N$ (see Lemma \ref{semiabelian}). In this case, the fiber over any $k$-point of the morphism $r^0_{n,m}\colon \underline{\mathrm{Pic}}^0_{X_n/k} \to \underline{\mathrm{Pic}}^0_{X_m/k}$ is a pseudo-torsor under $\mathrm{Ker}[\mathbb{G}_a^{\oplus \ell_n} \to \mathbb{G}_a^{\oplus \ell_m}]$. The kernel is a vector group, so since $H^1(\Spec k,\mathbb{G}_a)$ vanishes, the pseudo-torsor must have a $k$-rational point if it is non-empty.   \end{proof}

\begin{proposition} \label{NSeventually} Fix a complete local Noetherian $k$-algebra $A$, where $k$ is a field of characteristic zero, and let $X$ be a scheme proper over $\Spec A$. Then there is a $C>0$ such that for all $n \geq m\geq C$ the natural morphisms of group schemes $\underline{\mathrm{NS}}_{X_n} \to \underline{\mathrm{NS}}_{X_m}$ are isomorphisms. \end{proposition}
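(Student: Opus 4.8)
The plan is to reduce to the case $k=\bar k$ and then exhibit the groups $\underline{\mathrm{NS}}_{X_n}(\bar k)$ as a descending chain of \emph{saturated} subgroups of the finitely generated group $\mathrm{NS}(X_0)$ whose ranks stabilize; saturation plus rank-stabilization forces the chain to be eventually constant, which is exactly the asserted isomorphisms.

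First I would reduce to $k$ algebraically closed. By Lemma \ref{completedtensor} (applied with $k'=\bar k$) the formal family base-changes to one over a complete local Noetherian $\bar k$-algebra, and the formation of $\underline{\mathrm{Pic}}_{X_n/k}$, $\underline{\mathrm{Pic}}^0_{X_n/k}$, and hence of $\underline{\mathrm{NS}}_{X_n}$, commutes with the extension $k\to\bar k$ (the Picard functor and its identity component commute with field extension, and preimages commute with flat base change). Since a morphism of $k$-schemes is an isomorphism iff it is so after the faithfully flat base change $\Spec\bar k\to\Spec k$, it suffices to treat $k=\bar k$. There Lemma \ref{NSpullback}(2) gives $r_n^{-1}(\underline{\mathrm{Pic}}^0_{X_0/k})=\underline{\mathrm{Pic}}^0_{X_n/k}$, so by Definition \ref{NSgroupscheme} we have $\underline{\mathrm{NS}}_{X_n}=\pi_0(\underline{\mathrm{Pic}}_{X_n/k})$; as these are \'etale group schemes over $\bar k$ (Remark \ref{NSschemevgroup}), it is enough to prove that the nested finitely generated subgroups $N_n:=\underline{\mathrm{NS}}_{X_n}(\bar k)\subseteq N_0=\mathrm{NS}(X_0)$ satisfy $N_n=N_m$ for $n\ge m\gg 0$.

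Next I would realize each $N_n$ as the kernel of an obstruction homomorphism into a $\mathbb{Q}$-vector space, working with the Picard functor so as to avoid any gap between line bundles and the Picard scheme. Writing $I_n$ for the nilpotent ideal of $X_0$ in $X_n$ and $g\colon X_0\to\Spec k$ for the structure morphism, the exact sequence $1\to 1+I_n\to\mathbb{G}_{m,X_n}\to\mathbb{G}_{m,X_0}\to 1$ on the common \'etale site, pushed forward along $g$, yields a long exact sequence whose relevant portion is
\[ R^1g_*(1+I_n)\xrightarrow{a}\underline{\mathrm{Pic}}_{X_n/k}\xrightarrow{\rho_n}\underline{\mathrm{Pic}}_{X_0/k}\xrightarrow{\delta_n} R^2g_*(1+I_n). \]
Because $\operatorname{char}k=0$ and $I_n$ is nilpotent, the logarithm identifies $1+I_n$ with the coherent sheaf $I_n$, so $R^ig_*(1+I_n)$ are vector groups and $\delta_n$ is a homomorphism whose $\bar k$-points land in the $\mathbb{Q}$-vector space $H^2(X_0,I_n)$. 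The kernel $K_n:=\ker\delta_n$ is a closed subgroup scheme equal to $\operatorname{Im}(\rho_n)$; since $\ker\rho_n=\operatorname{Im}(a)$ is a quotient of the vector group $R^1g_*(1+I_n)$, it is connected and lies in $\underline{\mathrm{Pic}}^0_{X_n/k}$, so passing to components gives $\pi_0(K_n)=\underline{\mathrm{NS}}_{X_n}$. To descend $\delta_n$ to the N\'eron–Severi group I would use that $\underline{\mathrm{Pic}}^0_{X_0/k}(\bar k)=\mathrm{Pic}^0(X_0)$ is divisible in characteristic zero (shown in the proof of Lemma \ref{NSpullback}): then $U_n:=\delta_n(\mathrm{Pic}^0(X_0))$ is a divisible, hence $\mathbb{Q}$-linear, subspace of $H^2(X_0,I_n)$, and $\delta_n$ induces $\bar\delta_n\colon N_0=\mathrm{NS}(X_0)\to H^2(X_0,I_n)/U_n$ with $N_n=\ker\bar\delta_n$. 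As the target is torsion-free, the kernel of $\bar\delta_n$ is saturated, so $N_n$ is saturated in $N_0$ and consequently $N_m/N_n$ is torsion-free for all $n\ge m$.

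Finally, since a lift to $X_n$ restricts to a lift to $X_m$ the subgroups are nested, $N_n\subseteq N_m\subseteq N_0$, so $\operatorname{rk}N_n$ is non-increasing and bounded below; hence there is a $C$ with $\operatorname{rk}N_n=\operatorname{rk}N_m$ for all $n\ge m\ge C$. For such $n,m$ the quotient $N_m/N_n$ is finite (equal ranks) and torsion-free (saturation), so $N_n=N_m$, whence $\underline{\mathrm{NS}}_{X_n}\to\underline{\mathrm{NS}}_{X_m}$ is bijective on $\bar k$-points and thus an isomorphism of \'etale group schemes. I expect the middle step to be the main obstacle: correctly setting up $\delta_n$ and, crucially, descending it through $\mathrm{Pic}^0(X_0)$ to present $N_n$ as the kernel of a homomorphism into a $\mathbb{Q}$-vector space. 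This is precisely where characteristic zero enters — via the logarithm trivialization of $1+I_n$ and the divisibility of $\mathrm{Pic}^0$ — and it is what rules out the ``$2^k\mathbb{Z}\subset\mathbb{Z}$'' type of non-stabilization that an arbitrary descending chain of subgroups of a finitely generated group could otherwise exhibit.
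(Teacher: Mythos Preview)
Your proof is correct and follows essentially the same approach as the paper: reduce to $k=\bar k$, realize $\underline{\mathrm{NS}}_{X_n}(\bar k)$ as a descending chain of subgroups of $\mathrm{NS}(X_0)$, and use that the obstruction to lifting lies in a $\mathbb{Q}$-vector space together with divisibility of $\mathrm{Pic}^0$ to show that any proper inclusion forces a rank drop. The only difference is packaging: the paper argues step-by-step with the square-zero obstruction $X_{n-1}\hookrightarrow X_n$ and divisibility of $\mathrm{Pic}^0(X_{n-1})$, whereas you use the logarithm on the full nilpotent ideal to get a single obstruction map $\delta_n$ out of $\mathrm{Pic}(X_0)$ and divisibility of $\mathrm{Pic}^0(X_0)$, yielding the equivalent (and slightly cleaner) statement that each $N_n$ is \emph{saturated} in $N_0$.
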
 

\begin{proof} Since the group schemes are \'etale it suffices to show the natural morphism induces bijections $\underline{\mathrm{NS}}_{X_n}(\bar{k}) \to \underline{\mathrm{NS}}_{X_m}(\bar{k})$ for large $n \geq m$, and this is the same as showing that the natural maps 
\[\mathrm{NS}(X_n\times_k \bar{k}/S_n \times_k \bar{k}) \to \mathrm{NS}(X_m\times_k \bar{k}/S_m \times_k \bar{k})\] 
are bijections for large $n,m$. By Lemma \ref{completedtensor} applied to $\bar{k}=k'$ and replacing $A$ with $A'$ we may thus assume that $k$ is algebraically closed. Observe the following

\begin{enumerate} 
\item $(\mathrm{Pic}^0(X_n))$ is a Mittag-Leffler system of divisible abelian groups by Lemma \ref{NSpullback} and Corollary \ref{cor:pic0MLchar0}.
\item The system $(\mathrm{NS}(X_n/S_n))$ consists of finitely generated abelian groups with injective connecting maps (see Remark \ref{rem:rankmakessense}).
\item By Lemma \ref{NSpullback}, these fit into exact sequences
\[0 \to \mathrm{Pic}^0(X_n) \to \mathrm{Pic}(X_n) \to \mathrm{NS}(X_n/S_n) \to 0.\]
\end{enumerate}

For convenience, we set $G_n=\mathrm{NS}(X_n/S_n)$ and $D_n=\mathrm{Pic}^0(X_n)$. In order to prove $(G_n)$ is eventually constant, we will show that the nested sequence 
\[\dots \subset G_{2} \subset G_{1} \subset G_0\] 
can have proper inclusions at most $\mathrm{rk}(G_0)$ many times. Indeed, we will show that if an inclusion $G_{n} \subset G_{n-1}$ is proper, then $\mathrm{rk}(G_{n})<\mathrm{rk}(G_{n-1})$. Assume $G_{n} \subsetneq G_{n-1}$. Let $\alpha \in G_{n-1}$ be an element not in $G_{n}$, we will show that $\ell\alpha$ is not in $G_{n}$ for every $\ell \geq 1$. Note that this will also show that the torsion of $G_0$ lies in all the $G_n$. 

Assume for a contradiction that $\ell\alpha \in G_{n}$ for some nonzero integer $\ell$. There are line bundles $L$ on $X_{n-1}$ and $M$ on $X_{n}$ such that $[L]=\alpha$ and $[M]=\ell\alpha$. Since $\alpha$ is not in $G_{n}$ we know that $L$ does not extend to $X_{n}$. Moreover, $M|_{X_{n-1}}=L^{\otimes \ell} \otimes N$ for some line bundle $N \in D_{n-1}$. Since $D_{n-1}$ is divisible, we may write $N=(N')^{\otimes \ell}$ and therefore $M|_{X_{n-1}}=(L \otimes N')^{\otimes \ell}$. However, $L \otimes N'$ also represents $\alpha$ in $G_{n-1}$ and hence the line bundle cannot extend to $X_{n}$. In other words, if $I$ is the square-zero ideal which cuts out $X_{n-1}$ in $X_{n}$, the obstruction
\[o(L \otimes N') \in H^2(X_{n},I)\]
to extending $L \otimes N'$ to $X_{n}$ is nonzero. Since the latter is a $\mathbb{Q}$-vector space and the obstruction map $\mathrm{Pic}(X_{n-1}) \to H^2(X_{n},I)$ a group homomorphism, we must have that 
\[o(M|_{X_{n-1}})=\ell o(L \otimes N') \neq 0,\]
which contradicts the fact that $M|_{X_{n-1}}$ (and $\ell\alpha$) extends to $X_{n}$. \end{proof}

\begin{lemma} \label{NSML} Let $X \to \Spec A$ be a proper morphism over a complete local Noetherian $k$-algebra $(A,\mathfrak{m})$. Assume $k$ has characteristic zero. If we set $S_n=\Spec A/\mathfrak{m}^{n+1}$ and $X_n=X \times_S S_n$, the inverse systems of groups $(I_n)$, given by $I_n=\mathrm{Im}[\underline{\mathrm{Pic}}_{X_n/k}(k) \to \underline{\mathrm{NS}}_{X_n}(k)]$, and $(H^1(\Spec k, \underline{\mathrm{Pic}}_{X_n/k}))$ are eventually constant. \end{lemma}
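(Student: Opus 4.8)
The plan is to run the long exact cohomology sequence over $\Spec k$ attached to the short exact sequence of group schemes
$$0 \to \underline{\mathrm{Pic}}^0_{X_n/k} \to \underline{\mathrm{Pic}}_{X_n/k} \to \underline{\mathrm{NS}}_{X_n} \to 0,$$
which is exact by Definition \ref{NSgroupscheme} together with Lemma \ref{NSpullback}(2) (in characteristic zero $r_n^{-1}(\underline{\mathrm{Pic}}^0_{X_0/k})=\underline{\mathrm{Pic}}^0_{X_n/k}$, and all three terms are smooth, so we may compute in the \'etale topology). Restriction along $X_m \hookrightarrow X_n$ makes these sequences into an inverse system, giving a commuting ladder of long exact sequences. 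Writing $\delta_n$ for the connecting map $\underline{\mathrm{NS}}_{X_n}(k)\to H^1(\Spec k,\underline{\mathrm{Pic}}^0_{X_n/k})$ and $\partial_n$ for $H^1(\Spec k,\underline{\mathrm{NS}}_{X_n})\to H^2(\Spec k,\underline{\mathrm{Pic}}^0_{X_n/k})$, exactness gives $I_n=\ker\delta_n$ together with a short exact sequence
$$0 \to \mathrm{coker}(\delta_n) \to H^1(\Spec k,\underline{\mathrm{Pic}}_{X_n/k}) \to \ker(\partial_n) \to 0.$$

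First I would show that the cohomology of $\underline{\mathrm{Pic}}^0_{X_n/k}$ is eventually constant, compatibly with the ladder. Using the semi-abelian decomposition $0\to\mathbb{G}_a^{\oplus\ell_n}\to\underline{\mathrm{Pic}}^0_{X_n/k}\to E_n\to 0$ of Lemma \ref{semiabelian}, whose formation is respected by the restriction morphisms, together with the vanishing $H^i(\Spec k,\mathbb{G}_a)=0$ for $i\ge 1$ over a field, the natural maps $H^i(\Spec k,\underline{\mathrm{Pic}}^0_{X_n/k})\to H^i(\Spec k,E_n)$ are isomorphisms. Since $E_n\to E_m$ is an isomorphism for all large $n\ge m$ (again Lemma \ref{semiabelian}), each system $(H^i(\Spec k,\underline{\mathrm{Pic}}^0_{X_n/k}))$ is eventually constant with isomorphic restriction maps. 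Likewise Proposition \ref{NSeventually} gives that $\underline{\mathrm{NS}}_{X_n}\to\underline{\mathrm{NS}}_{X_m}$ is an isomorphism of group schemes for large $n\ge m$, so $(\underline{\mathrm{NS}}_{X_n}(k))$ and every $(H^i(\Spec k,\underline{\mathrm{NS}}_{X_n}))$ are eventually constant with isomorphic restriction maps.

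With these inputs the conclusion is formal. For $n\ge m$ beyond all relevant bounds, the restriction maps on $\underline{\mathrm{NS}}_{X_n}(k)$ and on $H^1(\Spec k,\underline{\mathrm{Pic}}^0_{X_n/k})$ are isomorphisms compatible with $\delta_n$ and $\delta_m$, hence they restrict to an isomorphism $\ker\delta_n\xrightarrow{\sim}\ker\delta_m$, so $(I_n)$ is eventually constant; the same comparison gives that $(\mathrm{coker}(\delta_n))$ is eventually constant. Moreover $\ker(\partial_n)$ is eventually constant, because $\partial_n$ is compatible with the isomorphisms on $H^1(\Spec k,\underline{\mathrm{NS}}_{X_n})$ and $H^2(\Spec k,\underline{\mathrm{Pic}}^0_{X_n/k})$. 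Applying the five lemma to the morphism of the displayed short exact sequences then shows $H^1(\Spec k,\underline{\mathrm{Pic}}_{X_n/k})\to H^1(\Spec k,\underline{\mathrm{Pic}}_{X_m/k})$ is an isomorphism for large $n\ge m$, so $(H^1(\Spec k,\underline{\mathrm{Pic}}_{X_n/k}))$ is eventually constant.

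The main obstacle is bookkeeping: one must check that every identification used---the semi-abelian splitting, the stabilization of $\underline{\mathrm{NS}}$, and the resulting isomorphisms on the $H^i$---is genuinely natural with respect to the restriction maps, so that the ladder of long exact sequences commutes. This is precisely what the functoriality built into Lemma \ref{semiabelian} (the decomposition is preserved by the transition morphisms) and into Proposition \ref{NSeventually} (an isomorphism of group schemes, not merely of groups of points) is there to provide; once commutativity is secured, the kernel/cokernel identifications and the five-lemma step are routine.
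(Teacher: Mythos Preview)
Your argument is correct and follows essentially the same route as the paper: both use the long exact sequence attached to $0\to\underline{\mathrm{Pic}}^0_{X_n/k}\to\underline{\mathrm{Pic}}_{X_n/k}\to\underline{\mathrm{NS}}_{X_n}\to 0$, stabilize $H^i(\Spec k,\underline{\mathrm{Pic}}^0_{X_n/k})$ via the semi-abelian decomposition of Lemma~\ref{semiabelian} and the vanishing of $H^i(\Spec k,\mathbb{G}_a)$, stabilize the N\'eron--Severi side via Proposition~\ref{NSeventually}, and finish with the five lemma. Your presentation makes the connecting maps $\delta_n,\partial_n$ and the resulting short exact sequence for $H^1$ more explicit, but the content is the same.
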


\begin{proof} By Definition \ref{NSgroupscheme} and Lemma \ref{NSpullback}, for any $n \geq m \geq 0$ we obtain a commutative diagram with exact rows
\vspace{-.3cm}
\[
\begin{tikzcd}
  0 \arrow[r] & I_n \arrow[d] \arrow[r] & \underline{\mathrm{NS}}_{X_n}(k) \arrow[d] \arrow[r] & H^1(\Spec k,\underline{\mathrm{Pic}}^0_{X_n/k}) \arrow[d]  \\
  0 \arrow[r] & I_m \arrow[r] & \underline{\mathrm{NS}}_{X_m}(k)  \arrow[r] & H^1(\Spec k,\underline{\mathrm{Pic}}^0_{X_m/k})
\end{tikzcd}
\] 
By Proposition \ref{NSeventually} the middle vertical arrow is an isomorphism for all large $n$, $m$. So to show $(I_n)$ is eventually constant it suffices to show that the right-hand vertical arrow is also
an isomorphism for all large $n$, $m$. Taking cohomology of the exact sequence (\ref{eqn:conndecomp}) at $n$ and $m$ yields the diagram 
\vspace{-.3cm}
\[
\begin{tikzcd}
   H^i(\Spec k, \underline{\mathrm{Pic}}^0_{X_n/k}) \arrow[d] \arrow[r] & H^i(\Spec k, E_n) \arrow[d]   \\
 H^i(\Spec k, \underline{\mathrm{Pic}}^0_{X_m/k}) \arrow[r] & H^i(\Spec k, E_m)
\end{tikzcd}
\]
The horizontal arrows are isomorphisms for $i \geq 1$, because $H^i(\Spec k, \mathbb{G}_a)=0$, and by Lemma \ref{semiabelian} the vertical arrow on the right is an isomorphism for all large $n$, $m$. For the remaining assertion, we take cohomology of the short exact sequence
\[ 0 \to \underline{\mathrm{Pic}}^0_{X_n/k} \to \underline{\mathrm{Pic}}_{X_n/k} \to \underline{\mathrm{NS}}_{X_n} \to 0 \]
and apply Proposition \ref{NSeventually} along with the five lemma to see that the system $(H^1(\Spec k, \underline{\mathrm{Pic}}_{X_n/k}))$ is eventually constant.
\end{proof}

\begin{remark} \label{groray} In \cite[Rem. 3.4 (a)]{Brauer3}, Grothendieck proposes a strategy of Raynaud designed to produce a non-torsion class $\alpha$ for which $\phi(\alpha)=0$ ($\phi$ as in (\ref{eqn.phi})), and he also remarks that this construction does not depend on the residue characteristic. Our next result, Theorem \ref{complexinjective}, shows that any strategy which is insensitive to the characteristic must fail. \end{remark}

\begin{Theorem} \label{complexinjective} Let $f\colon X \to S=\Spec A$ be a proper morphism over a complete local Noetherian
$k$-algebra $(A,\mathfrak{m})$ where $k$ has characteristic zero. If we set $S_n=\Spec A/\mathfrak{m}^{n+1}$ and $X_n=X \times_S S_n$, then the systems of abelian groups $(\mathrm{Pic}(X_n))$, $(\underline{\mathrm{Pic}}_{X_n/k}(k))$, $(\mathrm{NS}(X_n/S_n))$ and $(\mathrm{Pic}(X_n)/\mathrm{Pic}^{\tau}(X_n))$ are Mittag-Leffler. In particular, $\phi\colon H^2(X,\mathbb{G}_m) \to \invlim H^2(X_n,\mathbb{G}_m)$ is injective. 
\end{Theorem}

\begin{proof}
For every $n \geq 0$, we have an exact sequence of abelian groups
\[0 \to \underline{\mathrm{Pic}}^0_{X_n/k}(k) \to \underline{\mathrm{Pic}}_{X_n/k}(k) \to I_n \to 0\]
where $I_n=\mathrm{Im}[\underline{\mathrm{Pic}}_{X_n/k}(k) \to \underline{\mathrm{NS}}_{X_n}(k)]$. By Corollary \ref{cor:pic0MLchar0}, the system $(\underline{\mathrm{Pic}}^0_{X_n/k}(k))$ is Mittag-Leffler. Since $(I_n)$ is Mittag-Leffler (see Lemma \ref{NSML}) it follows from Example \ref{finitevects} (4) that $(\underline{\mathrm{Pic}}_{X_n/k}(k))$ is Mittag-Leffler.
Thus by Lemma \ref{BrauerCart} we may conclude $(\mathrm{Pic}(X_n))$ is Mittag-Leffler.
By Example \ref{finitevects} (2) this implies $(\mathrm{Pic}(X_n)/\mathrm{Pic}^{\tau}(X_n))$ is
Mittag-Leffler and, by Remark \ref{rem:tauneron} and Example \ref{finitevects} (1),
that $(\mathrm{NS}(X_n/S_n))$ is Mittag-Leffler. \end{proof}

\vspace{-.3cm}
\section{Formal GAGA for Brauer classes} \label{sec:GAGAbr}

The \emph{cohomological Brauer group} of a quasi-compact scheme $X$ is $\mathrm{Br}'(X)\defeq H^2(X,\mathbb{G}_m)_{\mathrm{tors}}$. We establish a formal GAGA result for Brauer classes in characteristic zero. 

\begin{Theorem} \label{thm:GAGAbr} Let $\pi\colon X \to \Spec A$ be a proper morphism of schemes where $A$ is an $I$-adically complete Noetherian ring containing $\mathbb{Q}$. Let $X_n=X \times_{\Spec A} \Spec(A/I^{n+1})$ and suppose one of the following holds:
\begin{enumerate} \item $A$ is a complete local Noetherian $k$-algebra, or
\item $A$ is reduced and $\pi$ is flat with normal fibers. 
\end{enumerate}
Then the natural map $\psi\colon \mathrm{Br}'(X) \to \invlim \mathrm{Br}'(X_n)$
is an isomorphism.
\end{Theorem}

\begin{proof} It suffices to show that there is an $M>0$ such that if $m \geq M$ and $\alpha \in H^2(X_{m+1},\mathbb{G}_m)$ restricts to an $\ell$-torsion element in $H^2(X_m,\mathbb{G}_m)$, for any $\ell>0$, then $\alpha$ must either be $\ell$-torsion or have infinite order. This implies $\invlim \mathrm{Br}'(X_n)$ is torsion and equal to $(\invlim H^2(X_n,\mathbb{G}_m))_{\mathrm{tors}}$. The fact that $\mathrm{Br}'(X) \to \invlim \mathrm{Br}'(X_n)$ is an isomorphism now follows from  Corollary \ref{cor:torsionextends} combined with Theorem \ref{complexinjective} or Proposition \ref{prop:nonlocal}. Indeed, the latter implies $H^2(\mathfrak{X},\mathbb{G}_m) \to \invlim H^2(X_n,\mathbb{G}_m)$ is an isomorphism by (\ref{eqn.introexact}).

If (1) holds, then we may choose an $N>0$ such that the following are satisfied:
\begin{itemize} 
\item[(a)] The cokernel of $\underline{\mathrm{Pic}}_{X_{m+1}/k} \to \underline{\mathrm{Pic}}_{X_{m}/k}$ is a vector group for every $m \geq N$ (see Proposition \ref{NSeventually} and Lemma \ref{semiabelian}).
\item[(b)] The maps $H^1(\Spec k, \underline{\mathrm{Pic}}_{X_n/k}) \to H^1(\Spec k, \underline{\mathrm{Pic}}_{X_m/k})$ are isomorphisms for all $n \geq m \geq N$ (see Lemma \ref{NSML}).
\item[(c)] If $X_n \to Y_n \to \Spec A/\mathfrak{m}^{n+1}$ is the Stein factorization then $(Y_n)_{\mathrm{red}} \to (Y_m)_{\mathrm{red}}$ is an isomorphism for all $n, m \geq N$. Consequently, the maps $H^2(Y_{n},\mathbb{G}_{m}) \to H^2(Y_{m},\mathbb{G}_{m})$ are isomorphisms for all such $n,m$.
\end{itemize}
By Theorem \ref{complexinjective} we may choose an $M \geq N$ such that the image of $\underline{\mathrm{Pic}}_{X_{m}/k}(k) \to \underline{\mathrm{Pic}}_{X_{N}/k}(k)$ is stable for every $m \geq M$. Assume for $m \geq M$ there is an $\alpha \in H^2(X_{m},\mathbb{G}_m)[\ell]$ with a pre-image $\alpha'$ in $H^2(X_{m+1},\mathbb{G}_m)_{\mathrm{tors}}$ which is not killed by $\ell$. In this case, $0 \neq \beta=\ell \alpha' \in H^2(X_{m+1},\mathbb{G}_m)_{\mathrm{tors}}$ satisfies $\beta|_{X_{m}}=0$. 

Consider the short exact sequence
\begin{equation}
\label{eqn:gaga}
0 \to 1+I \to \mathbb{G}_{m,X_{m+1}} \to \mathbb{G}_{m,X_m} \to 0
\end{equation}

\noindent where $I$ is the ideal cutting out $X_m$ in $X_{m+1}$. The long exact sequence of sheaves arising from pushing forward (\ref{eqn:gaga}) along the structure morphism $f_{m+1}\colon X_{m+1} \to \Spec k$ induces a long exact sequence of stalks at the geometric point. This yields the injection:
\[\mathrm{Coker}[\mathrm{Pic}(X_{m+1} \otimes_k \bar{k}) \to \mathrm{Pic}(X_m \otimes_k \bar{k})] \to H^2(X_{m+1}\otimes_k \bar{k},1+I \otimes_k \bar{k}).\]
This is necessarily $\mathbb{Q}$-linear for $m \geq N$ because by (a) the term on the left is isomorphic to $\mathbb{G}_a^{\oplus j}(\bar{k})$ for some integer $j$. Therefore $\beta|_{X_{m+1} \otimes_k \bar{k}}$ must be zero, i.e., 
\[\beta \in \mathrm{Br}_1(X_{m+1}) \defeq \mathrm{Ker}[H^2(X_{m+1},\mathbb{G}_{m}) \to H^2(X_{m+1} \otimes_k \bar{k},\mathbb{G}_{m})].\]
By functoriality of the Leray spectral sequence, we obtain the commutative diagram
\begin{center}
\begin{tikzcd}[column sep=18pt]
    \underline{\mathrm{Pic}}_{X_{m+1}/k}(k) \arrow[d] \arrow[r, "c_{m+1}"] &  H^2(Y_{m+1},\mathbb{G}_{m}) \arrow[d, "\simeq \text{ by }\mathrm{(c)}"] \arrow[r] & \mathrm{Br}_1(X_{m+1}) \arrow[d] \arrow[r] & H^1(\Spec k, \underline{\mathrm{Pic}}_{X_{m+1}/k}) \arrow[d, "\simeq \text{ by }\mathrm{(b)}"] \\
 \underline{\mathrm{Pic}}_{X_m/k}(k)  \arrow[r, "c_m"] & H^2(Y_{m},\mathbb{G}_{m}) \arrow[r] & \mathrm{Br}_1(X_{m}) \arrow[r] & H^1(\Spec k,\underline{\mathrm{Pic}}_{X_{m}/k}).
 \end{tikzcd}
\end{center}
A diagram chase shows that if $\beta \in  \mathrm{Br}_1(X_{m+1})$ maps to zero in $ \mathrm{Br}_1(X_{m})$, then $\mathrm{Im}(c_{m+1}) \subsetneq \mathrm{Im}(c_{m})$. However, this cannot happen if $m \geq M$ because
\[\mathrm{Im}[\underline{\mathrm{Pic}}_{X_m/k}(k) \to \underline{\mathrm{Pic}}_{X_N/k}(k)]=\mathrm{Im}[\underline{\mathrm{Pic}}_{X_{m+1}/k}(k) \to \underline{\mathrm{Pic}}_{X_N/k}(k)].\]

If (2) holds then Proposition \ref{prop:nonlocal} implies there is an $M>0$ such that $\mathrm{Pic}(X_n) \to \mathrm{Pic}(X_m)$ is surjective for every $n \geq m \geq M$. Thus, by taking cohomology of the exact sequence (\ref{eqn:gaga})
we see that the kernel of $H^2(X_{m+1},\mathbb{G}_m) \to H^2(X_{m},\mathbb{G}_m)$ is a $\mathbb{Q}$-vector space. Therefore, any element $\alpha' \in H^2(X_{m+1},\mathbb{G}_m)_{\mathrm{tors}}$ which restricts to an $\ell$-torsion element in $H^2(X_m,\mathbb{G}_m)$ must be $\ell$-torsion as long as $m \geq M$. 
\end{proof}

\begin{remark} \label{rem:sameAZ} If $X$ is projective over $\Spec A$, a result of Gabber (see \cite[Thm. 4.2.1]{zbMATH07384449}) implies that the cohomological Brauer group coincides with the Brauer group, $\mathrm{Br}_{\mathrm{Az}}(X)$ of Azumaya algebras up to endomorphisms of a vector bundle. In this setting, Theorem \ref{thm:GAGAbr} can be expressed as $\mathrm{Br}_{\mathrm{Az}}(X) \xrightarrow{\sim} \invlim \mathrm{Br}_{\mathrm{Az}}(X_n)$. \end{remark}

\begin{Example} \label{ex:nonsurjp} The analog of Theorem \ref{thm:GAGAbr} fails in characteristic $p>0$, even for trivial families. Set $S=\Spec k[[t]]$ and $X=V \times S$ where $V$ is an ordinary $K3$ surface over a separably closed field $k$ of characteristic $p>0$, then there is a natural identification for each $n>0$ (see, e.g., \cite[Def. 18.3.3]{zbMATH06617332}):
\[\mathrm{Ker}[H^2(X_n, \mathbb{G}_m) \to H^2(V, \mathbb{G}_m)] \cong  1+tk[t]/(t^{n+1}) \subset (k[t]/(t^{n+1}))^\times.\]
Thus, the group $\invlim \mathrm{Br}'(X_n)$ is not torsion, so $\psi$ as in Theorem \ref{thm:GAGAbr} cannot be surjective. \end{Example}

\begin{remark} \label{rem:charpgaga} The surjectivity of $\psi$ in Theorem \ref{thm:GAGAbr} for prime-to-$p$ torsion still holds: if $X$ is a proper $A$-scheme where $A$ is a complete local Noetherian ring with residue field of characteristic $p>0$, then $\mathrm{Br}'(X)[p'] \to \invlim [ \mathrm{Br}'(X_n)[p']]$ is surjective, where $\mathrm{Br}'(X)[p']$ denotes the prime-to-$p$ torsion subgroup of $\mathrm{Br}'(X)$.  Indeed, let $\ell$ be a positive integer not divisible by $p$. By the $\ell$-divisibility of ${\invlim}^1\mathrm{Pic}(X_n)$ (see Lemma \ref{l-divisible}), any $\ell$-torsion element of $\invlim  \mathrm{Br}'(X_n)$ lifts to $H^2(\mathfrak{X},\mathbb{G}_m)[\ell]$. By Corollary \ref{cor:torsionextends} this is isomorphic to $\mathrm{Br}'(X)[\ell]$. To conclude the argument, we observe that $\invlim [ \mathrm{Br}'(X_n)[p']]$ is torsion. This holds, because for every $n$ the kernel of $H^2(X_{n+1},\mathbb{G}_m) \to H^2(X_{n},\mathbb{G}_m)$ is an $\mathbb{F}_p$-vector space.
\end{remark}

\vspace{-.5cm}

\bibliography{mybib}{}
\bibliographystyle{plain}

\end{document}